\newcommand{\X}{\mathbb{X}} 
\newcommand{\C}{\mathbb{C}} 
\newcommand{\N}{\mathbb{N}} 
\newcommand{\Z}{\mathbb{Z}} 
\newcommand{\Q}{\mathbb{Q}} 
\newcommand{\A}{\mathcal{A}} 
\newcommand{\End}{\operatorname{End}} 
\newcommand{\Hom}{\operatorname{Hom}} 
\newcommand{\fgl}{\mathfrak{gl}} 
\newcommand{\fsl}{\mathfrak{sl}} 
\newcommand{\ov}{\overline} 
\newcommand{\UU}{\mathbf{U}} 
\newcommand{\Sym}{\mathfrak{S}} 
\newcommand{\ep}{\varepsilon}
\newcommand{\Specht}{\mathsf{S}} 
\newcommand{\HH}{\mathbf{H}} 
\newcommand{\bil}[2]{\left\langle #1, #2 \right\rangle} 
\newcommand{\bigbil}[2]{\big\langle #1, #2 \big\rangle} 
\newcommand{\smbil}[2]{\langle #1, #2 \rangle} 
\newcommand{\tK}{\widetilde{K}} 
\newcommand{\cc}{\mathfrak{c}}  
\newcommand{\Walk}{\mathsf{Walk}} 
\newcommand{\aaa}{\mathbf{a}} 
\newcommand{\ch}{\operatorname{ch}} 
\newcommand{\mult}{\operatorname{d}} 
\newcommand{\st}{{\operatorname{st}}} 
\newcommand{\sst}{{\operatorname{sst}}} 
\newcommand{\Mod}[1]{#1\operatorname{-mod}} 
\newcommand{\sqbinom}[2]{\left[\begin{matrix}#1\\#2\end{matrix}\right]}
\newcommand{\tsp}{\mathsf{T}}  
\newcommand{\fU}{\mathfrak{U}} 
\newcommand{\Part}[1]{
 \foreach \x [count=\s from 1] in {#1}{
 	{\ifnum\s=1
		\draw (0,\s-1)--(\x,\s-1); 
		\fi}
   \draw (0,\s) to (\x,\s);
   \foreach \y in {0, ..., \x} {\draw (\y,\s)--(\y,\s-1);}
 }}
\def\UNIT{.18} \newcommand{\PART}[1]{
\begin{tikzpicture}[xscale=\UNIT, yscale=-\UNIT] 
	\Part{#1}
\end{tikzpicture}
}
\newtheorem{thm}{Theorem}[section]
\newtheorem*{thm*}{Theorem}
\newtheorem{lem}[thm]{Lemma}
\newtheorem*{lem*}{Lemma}
\newtheorem{prop}[thm]{Proposition}
\newtheorem*{prop*}{Proposition}
\newtheorem{cor}[thm]{Corollary}
\newtheorem*{cor*}{Corollary}
\newtheorem*{conj*}{Conjecture}
\theoremstyle{definition}
\newtheorem{defn}[thm]{Definition}
\newtheorem*{defn*}{Definition}
\newtheorem{example}[thm]{Example}
\newtheorem*{example*}{Example}
\newtheorem{rmk}[thm]{Remark}
\newtheorem*{rmk*}{Remark}
\newtheorem*{que*}{Question}
\title[Orthogonal decomposition of tensor space]%
      {Lowering operators, orthogonal decomposition of tensor space, 
      and quantized Schur--Weyl duality}
\author{Stephen Doty}
  \address{Department of Mathematics and Statistics, Loyola University
    Chicago, Chicago, IL 60660 USA}
  \email{doty@math.luc.edu}
\author{Anthony Giaquinto}
  \address{Department of Mathematics and Statistics,
  Loyola University Chicago, Chicago, IL 60660 USA}
   \email{tonyg@math.luc.edu}
\author{Stuart Martin}
  \address{DPMMS, Centre for Mathematical Sciences, Wilberforce Road,
  Cambridge, CB3 0WB, UK}
  \email{sm137@cam.ac.uk}
\keywords{Schur--Weyl duality, quantized enveloping algebras, 
lowering operators, Coxeter monomials}
\subjclass{16T30, 16T20, 17B37}
\thanks{The first author acknowledges support of the National 
Science Foundation under Grant No.~DMS-1929284 while 
in residence at the Institute for Computational and 
Experimental Research in Mathematics in Providence, RI, 
during the Categorification and Computation in Algebraic 
Combinatorics program. 
All the the authors thank the referees of an earlier version for 
suggesting possible connections with lowering operators.}
\begin{document}
\begin{abstract}\noindent
For $q$ generic, Jimbo showed that $q$-tensor space $V_q^{\otimes r}$ 
(where $V_q$ is the $n$-dimensional vector representation) 
satisfies Schur--Weyl duality with respect to the
commuting actions of the quantized enveloping algebra $\UU_q(\fgl_n)$ 
and the Iwahori--Hecke algebra $\HH_q(\Sym_r)$, with the latter 
action derived from the $R$-matrix. 
In the limit as $q \to 1$, one recovers classical
Schur--Weyl duality.


Using a recursive construction of certain linear combinations $\Psi_j$ 
of Coxeter monomials in the negative part of $\UU_q(\fgl_n)$, 
we give a combinatorial realization of the corresponding isotypic 
semisimple decomposition of $V_q^{\otimes r}$, 
indexed by paths in the Bratteli diagram. 
This extends earlier work (\emph{Journal of Algebra}
2024) of the first two authors for the case $n =2$. 
Our construction works over any field containing a non-zero element 
$q$ which is not a root of unity.

The element $\Psi_j$ depends on a weight $\lambda$ and 
is the ``evaluation at $\lambda$'' of a certain $q$-lowering operator
$\ov{\Psi}_j$ satisfying a similar recursion, up to renormalization.
This simplifies the construction of lowering operators. Both
$\Psi_j$ and $\ov{\Psi}_j$ are independent of a 
choice of root vectors. On the other hand, the $\Psi_j$ 
can be applied to construct root vectors 
(independent of the braid group action) 
as explicit linear combinations of Coxeter monomials.
\end{abstract}
\maketitle

%
\section{Introduction}\label{s:intro}\noindent
Let $\Bbbk$ be a field. Fix $0 \ne q \in \Bbbk$ which is not a root of
unity. Let $\UU_q = \UU_q(\fgl_n)$ be the quantized enveloping
algebra of $\fgl_n$ over $\Bbbk$ at $x = q$; by this we mean the
specialization of Lusztig's divided power $\Z[x,x^{-1}]$-form via $x
\mapsto q$. As $q$ is not a root of unity, $\UU_q$ may also be defined
by generators and relations as in \cite{Jimbo} (slightly modified by
Lusztig).

Throughout this paper, we identify partitions with dominant polynomial
weights.  Let $V_q(\lambda)$ be the (type $\mathbf{1}$) $q$-Weyl
module of highest weight $\lambda$.  Set $V_q = V_q(1)$, the vector
representation of $\UU_q$.  Jimbo observed that the $R$-matrix induces
an action of the Iwahori--Hecke algebra $\HH_q = \HH_q(\Sym_r)$ on
tensor space $V_q^{\otimes r}$, commuting with the $\UU_q$-action,
and that these commuting actions satisfy Schur--Weyl duality (each
action generates the full centralizer of the other), in the generic
case where the ground field is $\C(x)$ and $q=x$.  Taking $q \to 1$, 
this becomes classical Schur--Weyl duality.

It turns out that Jimbo's result holds over any field containing 
some $q\ne 0$ which is not a root of unity. 
As Jimbo never published his proof, we
include a proof (of the more general statement) in Section
\ref{s:SWD}. Our proof relies on the fact that the representations of
$\UU_q$ and $\HH_q$ ``behave the same'' as the representations of
their classical counterparts $U(\fgl_n)$ and $\C[\Sym_r]$ in the non
root of unity case.  Then standard semisimplicity theory yields as a
corollary that tensor space $V_q^{\otimes r}$ admits a
multiplicity-free decomposition
\begin{equation}\label{e:isotypic}
V_q^{\otimes r} \cong \textstyle \bigoplus_{\lambda}
V_q(\lambda) \otimes \Specht_q^\lambda
\end{equation}
as $(\UU_q, \HH_q)$-bimodules, where the sum is over the set of
partitions of $r$ into at most $n$ parts, and where
$\Specht_q^\lambda$ is the (simple) $q$-Specht module indexed by the
partition $\lambda$. The $q$-Specht module $\Specht^\lambda$ has a basis
indexed by the set of standard tableaux of shape $\lambda$, so
\eqref{e:isotypic} says that, as a $\UU_q$-module,
\begin{equation}\label{e:isotypic-2}
V_q^{\otimes r} \cong \textstyle\bigoplus_{\mathsf{T}}
V_q(\text{shape}(\mathsf{T}))
\end{equation}
where $\mathsf{T}$ varies over the set of standard tableaux with $r$
boxes and at most $n$ rows. Instead of indexing the sum in
\eqref{e:isotypic-2} by standard tableaux, we find it more convenient
to use walks on the Bratteli diagram. The equivalence between the two
indexing systems is explained at the end of this introduction.

Let $\{v_i\}_{i=1}^n$ be the standard basis of $V_q$. The basis is
orthonormal with respect to the usual bilinear form, defined by
$\bil{v_i}{v_j} = \delta_{ij}$. The standard basis $\{v_{i_1} \otimes
\cdots \otimes v_{i_r} \mid 1 \le i_j \le n \}$ is orthonormal with
respect to the natural extension of the form to $V_q^{\otimes r}$. The
purpose of this paper is to combinatorially construct a full set
\[
\{ \cc_\pi \mid \pi \in \Walk(r) \}
\]
of pairwise orthogonal highest weight vectors (\emph{maximal vectors}
by another name) in $V_q^{\otimes r}$, indexed by the set of walks in
the Bratteli diagram of length $r$, that realizes the decomposition
in~\eqref{e:isotypic-2}; 
see \cites{Gyoja,Carter,Ram-Wenzl,Brundan:98a,Brundan:98b} for related
results. We summarize the main consequences of our construction.

\begin{thm*}
Suppose that $0 \ne q \in \Bbbk$ is not a root of unity. We write $\pi
\to \lambda$ to mean that a walk $\pi$ terminates at a node $\lambda$
in the Bratteli diagram. Let $\lambda$ be a partition of $r$ into not
more than $n$ parts.  Then:
\begin{enumerate}
\item $\UU_q \cc_\pi \cong V_q(\lambda)$, as $\UU_q$-modules, for any
  $\pi \to \lambda$.
\item $V_q^{\otimes r} \cong \bigoplus_{\pi \in \Walk(r)} \UU_q \cc_\pi$.
\item The $\Bbbk$-span of $\{\cc_\pi \mid \pi \to \lambda\}$ is
  isomorphic to $\Specht_q^\lambda$, as $\HH_q$-modules.
\end{enumerate}
\end{thm*}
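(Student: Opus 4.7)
The plan is to leverage the already-established semisimplicity of $V_q^{\otimes r}$ as both a $\UU_q$-module and an $(\UU_q,\HH_q)$-bimodule, together with the key properties of the constructed vectors $\cc_\pi$---namely that they are pairwise orthogonal highest weight vectors, the weight of $\cc_\pi$ being the terminal node $\lambda$ of $\pi$---to deduce (a)--(c) via dimension counting and standard semisimple arguments.

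For part (a): since $\cc_\pi$ is a highest weight vector of weight $\lambda$ in the semisimple $\UU_q$-module $V_q^{\otimes r}$, the submodule $\UU_q \cc_\pi$ is a nonzero quotient of the Weyl module $V_q(\lambda)$. Because $q$ is not a root of unity, $V_q(\lambda)$ is simple, so this quotient map is an isomorphism. Non-vanishing of $\cc_\pi$ should be guaranteed by the construction (e.g.\ by orthonormality, $\smbil{\cc_\pi}{\cc_\pi}\neq 0$).

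For part (b): the pairwise orthogonality of the $\cc_\pi$ forces their linear independence. I would then upgrade this to orthogonality of the submodules $\UU_q \cc_\pi$: weight spaces attached to distinct weights are automatically orthogonal under a contravariant form, and for walks $\pi,\pi'$ terminating at the same $\lambda$, the scalar equality $\smbil{\cc_\pi}{\cc_{\pi'}}=0$ together with the appropriate invariance of the bilinear form propagates orthogonality across the whole submodules. Hence $\sum_\pi \UU_q \cc_\pi$ is a direct sum. By (a) and the bijection (explained at the end of the introduction) between walks $\pi\to\lambda$ of length $r$ and standard tableaux of shape $\lambda$, the dimension of this direct sum matches $\dim V_q^{\otimes r}$ via \eqref{e:isotypic-2}, yielding (b).

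For part (c): write $L^\lambda$ for the space of highest weight vectors of weight $\lambda$ in $V_q^{\otimes r}$. Since the $\HH_q$-action commutes with the $\UU_q$-action, $L^\lambda$ is an $\HH_q$-submodule, and reading off highest weight spaces in the bimodule decomposition \eqref{e:isotypic} (each $V_q(\lambda)$ has a one-dimensional highest weight space) gives $L^\lambda \cong \Specht_q^\lambda$ as $\HH_q$-modules. The vectors $\{\cc_\pi \mid \pi\to\lambda\}$ are linearly independent highest weight vectors of weight $\lambda$ in number equal to $\dim\Specht_q^\lambda = \dim L^\lambda$; therefore their $\Bbbk$-span equals $L^\lambda$, and (c) follows. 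The main obstacle I anticipate is in part (b): while orthogonality of the $\cc_\pi$ themselves is built into the construction, propagating it to orthogonality of the submodules $\UU_q \cc_\pi$ requires checking the correct contravariance property of the bilinear form; once this is in place, the three statements follow by semisimple bookkeeping.
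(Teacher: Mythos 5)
Your proposal is correct and follows essentially the same route as the paper: part (a) via the universal property of $q$-Weyl modules plus simplicity (Theorem~\ref{t:main}); part (b) from linear independence of the $\cc_\pi$ and a dimension/multiplicity count (Corollary~\ref{c:orthogonality}); part (c) from Schur--Weyl duality (Corollary~\ref{c:SWD}) and the fact that the highest weight space of weight $\lambda$ is an $\HH_q$-submodule. The one place you do slightly more work than necessary is part (b): once you know the $\cc_\pi$ are linearly independent highest weight vectors in number equal to the known multiplicity $\dim_\Bbbk \Specht_q^\lambda$ of $V_q(\lambda)$, the sum $\sum_\pi \UU_q\cc_\pi$ is automatically direct by semisimplicity and dimension counting in each isotypic component -- you need not propagate orthogonality of the $\cc_\pi$ to orthogonality of the generated submodules via the adjointness property of the form, though that propagation does work using Lemma~\ref{l:adjointness}.
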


Part (a) follows from the universal property of $q$-Weyl modules, and
the fact (Theorem~\ref{t:main}) that the $\cc_\pi$ are highest weight
vectors.  Part (b) follows from the fact that the $\cc_\pi$ are
non-isotropic and pairwise orthogonal
(Corollary~\ref{c:orthogonality}) so we have the correct number of
linearly independent highest weight vectors. Part (c) follows from
part (b) and Schur--Weyl duality (see Corollary~\ref{c:SWD}). This
completes the proof.

To restate the result in another way, we have constructed a disjoint union
\[
\{ \cc_\pi \mid \pi \in \Walk(r) \} = \textstyle
\bigsqcup_\lambda \{ \cc_\pi \mid \pi \to \lambda \}
\]
where $\lambda$ varies over the set of partitions of $r$ into not more
than $n$ parts, with the property that $\{ \cc_\pi \mid \pi \to
\lambda \}$ is an orthogonal basis of $\Specht_q^\lambda$. This also
means that the multiplicity of $\Specht_q^\lambda$ in a semisimple
decomposition of $V_q^{\otimes r}$, as an $\HH_q$-module, is equal to
$\dim_\Bbbk V_q(\lambda)$.

As an application, we obtain a basis for the algebra of invariants for
the restricted action of $\UU_q(\fsl_n)$. Note that we we get
invariants only in tensor degrees $r$ such that $r \equiv 0$ modulo $n$.

\begin{cor*}
$\{ \cc_\pi \mid \pi \to \lambda \text{ and $\lambda = (n^j)$ for some
    $j$} \}$ is a basis of the algebra of invariants $(V_q^{\otimes
    r})^{\UU_q(\fsl_n)}$ for the restricted action of $\UU_q(\fsl_n)$.
\end{cor*}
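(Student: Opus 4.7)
The plan is to read off the invariant subspace summand by summand from the decomposition provided by the main Theorem. By part~(b), $V_q^{\otimes r} = \bigoplus_{\pi \in \Walk(r)} \UU_q \cc_\pi$ as $\UU_q$-modules, and since taking $\UU_q(\fsl_n)$-invariants commutes with finite direct sums,
\[
(V_q^{\otimes r})^{\UU_q(\fsl_n)} = \bigoplus_{\pi \in \Walk(r)} (\UU_q \cc_\pi)^{\UU_q(\fsl_n)}.
\]
It therefore suffices to decide, for each $\pi \to \lambda$, whether $(\UU_q \cc_\pi)^{\UU_q(\fsl_n)}$ is nonzero, and in that case to exhibit a generator.

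Next I would use part~(a) to replace $\UU_q \cc_\pi$ by $V_q(\lambda)$ and analyze its restriction to $\UU_q(\fsl_n)$. Because $\UU_q(\fgl_n)$ differs from $\UU_q(\fsl_n)$ only by a central subalgebra (generated by the quantum determinant) that acts by a scalar on any irreducible, the restriction $V_q(\lambda)|_{\UU_q(\fsl_n)}$ remains irreducible, with highest weight obtained from $\lambda$ by subtracting $\lambda_n$ from each coordinate. Consequently $V_q(\lambda)^{\UU_q(\fsl_n)} \ne 0$ if and only if $\lambda_1 = \cdots = \lambda_n$, i.e.\ $\lambda$ is a rectangle with $n$ rows---the shape $(n^j)$ of the statement---and this forces $|\lambda| = nj = r$, so $n \mid r$. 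In that case $V_q(\lambda)$ is one-dimensional (a power of the quantum determinant), so the entire summand $\UU_q \cc_\pi = \Bbbk\,\cc_\pi$ is invariant; in particular $\cc_\pi$ itself is $\UU_q(\fsl_n)$-invariant, being simultaneously a highest and lowest weight vector of trivial $\fsl_n$-weight.

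Combining the two steps, the contributing summands are precisely those indexed by $\pi \to (n^j)$, each contributing a single line $\Bbbk\,\cc_\pi$, so $\{\cc_\pi : \pi \to (n^j) \text{ for some } j\}$ spans $(V_q^{\otimes r})^{\UU_q(\fsl_n)}$. Linear independence is immediate from Corollary~\ref{c:orthogonality}, which asserts that these vectors are nonzero and pairwise orthogonal. Once the main Theorem is in hand I do not foresee any genuine obstacle; the only point requiring care is the standard quantum-group fact that a type-$\mathbf{1}$ irreducible of $\UU_q(\fgl_n)$ remains irreducible on restriction to $\UU_q(\fsl_n)$ and becomes trivial exactly for rectangular $\lambda$ of height~$n$.
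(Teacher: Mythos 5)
Your proof is correct and follows essentially the same route as the paper's one-line argument: decompose $V_q^{\otimes r}=\bigoplus_\pi \UU_q\cc_\pi$ using parts (a) and (b) of the Theorem, observe that each summand $V_q(\lambda)$ restricts to an irreducible $\UU_q(\fsl_n)$-module which is trivial precisely when $\lambda$ is a rectangle of height $n$ (a power of the $q$-determinant), and get linear independence from the orthogonality Corollary. You have simply filled in the steps the paper leaves implicit — that the non-rectangular summands contribute no invariants and that taking invariants commutes with the direct sum — which is a sound elaboration rather than a different argument.
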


This holds because each highest weight vector $\cc_\pi$ such that $\pi
\to (n^j)$ generates a one-dimensional $q$-Weyl module isomorphic to
the $j$th power of the the $q$-determinant representation, which
becomes trivial upon restriction.

In principle, tensor space can be decomposed using crystal bases, but
our method is much more elementary. In particular, it produces
explicit formulas for the highest weight vectors, constructed
inductively, and the construction is easy to implement on a digital
computer. We use nothing more than root system combinatorics and
linear algebra.  The method should extend to other types of root
systems.

In our construction, each $\cc_\pi$ is obtained by a composition of
$\Phi$ operators applied to the unit $1$, regarded as a basis for the
zeroth tensor power of $V_q$. The precise sequence of operators in the
composition is determined by the walk $\pi$; see
Definition~\ref{d:Upsilon}. There are $n$ such operators, denoted
$\Phi_1, \dots, \Phi_n$. The operator $\Phi_j$ has the following
property: $\Phi_j(b)$ is a highest weight vector if $b$ is one, and if
the weight of $b$ is such that it is possible to add a node to the
$j$th row of its Young diagram. The resulting Young diagram obtained
by adding that node gives the weight of $\Phi_j(b)$. Our construction
extends the construction of similar operators $\Phi_1$, $\Phi_2$ in
\cite{DG:orthog}, except that in the present paper we have reversed the
order of tensor products in order to simplify certain powers of $q$.

The $\Phi$ operators depend on certain elements $\Psi_j$ in the
negative part of the quantized enveloping algebra. In fact, $\Psi_j$
is a linear combination of \emph{Coxeter monomials}, where a Coxeter
monomial is a product $F_1\cdots F_j$ of negative part generators, up
to reordering. 
When $q = 1$, we show in Section~\ref{s:q=1} that $\Psi_j$ and
Carter's lowering operator $S_{1,j+1}$ defined in \cite{Carter} 
have the same effect on weight vectors,
up to scalar multiple and a twist by the automorphism induced from 
the graph automorphism of the underlying Dynkin diagram. 
The twist is a consequence of our choice of embedding of $\UU_q(\fgl_n)$
into $\UU_q(\fgl_{n+1})$ via a ``shifting'' of the generators. 
Brundan \cites{Brundan:98a, Brundan:98b} defined $q$-analogues 
of Carter's lowering operators which depend on a choice of root vectors; 
our recursive method avoids making any such choice.
The definition of $\Psi_j$ depends on a given weight
$\lambda$, and if all the nodes in $\lambda$ are addable then the
various (shifts of) the $\Psi$'s appearing in the formulas for the
$\Phi_2, \dots, \Phi_n$ define a complete set of negative root vectors
in the negative part $\UU_q^-$; see
Theorem~\ref{t:root-vectors}. These negative root vectors may be of
independent interest.

The paper is organized as follows. In Section \ref{s:pre} we establish
our notation and recall basic definitions and results. Section
\ref{s:SWD} gives the aforementioned proof of Jimbo's Schur--Weyl
duality.  Section \ref{s:Coxeter} introduces the notion of a Coxeter
monomial in the negative part $\UU_q^-$ of $\UU_q$, based on the
notion of Coxeter element \cite{Humphreys} in a Coxeter group. Section
\ref{s:Psi} recursively defines certain elements $\Psi_1, \dots,
\Psi_{n-1}$ in $\UU_q^-$ as linear combinations of Coxeter monomials
and develops their initial properties. Section \ref{s:Phi} defines
operators $\Phi_1, \dots, \Phi_n$ (depending on the $\Psi_j$) having
the property that $\Phi_j(b)$ is a highest weight vector if $b$ is,
for each $j$, and constructs the $\cc_\pi$.  Section \ref{s:orthog}
proves pairwise orthogonality of the $\cc_\pi$. Finally, Section
\ref{s:further} gives some additional properties of the $\Psi_j$
elements and Section~\ref{s:q=1} analyzes the relation to the $q=1$ case.
Section ~\ref{s:q=1} was added after the first version of this paper 
was submitted, in response to suggestions from the referees to 
clarify the connection to lowering operators.

\textbf{Bratteli diagram.}
The Bratteli diagram is a graph constructed inductively as follows.
The vertices are partitions into not more than $n$ parts, with
the empty partition $\emptyset$ being the sole vertex at level
zero. Vertices in level $r$ are the partitions of $r$ into not more
than $n$ parts, ordered by some total ordering compatible with reverse
dominance. Draw an edge between two vertices $\lambda$ and $\mu$ lying
in successive levels if and only if $\lambda$ and $\mu$ differ by
exactly one box, which is an addable node (see Section~\ref{s:pre})
for the lower level partition.  We display the first few levels of the
Bratteli diagram in Figure~\ref{fig:1}. 
A \emph{walk} on the Bratteli diagram is a connected piecewise
linear decreasing path
\[
\pi = ( \emptyset \to \pi^{(1)} \to \cdots \to \pi^{(r-1)} \to
\pi^{(r)} = \lambda )
\]
from the empty vertex $\emptyset$ to some vertex $\lambda$, where
$\pi^{(j)}$ is a vertex at level $j$ for each $j$.

%
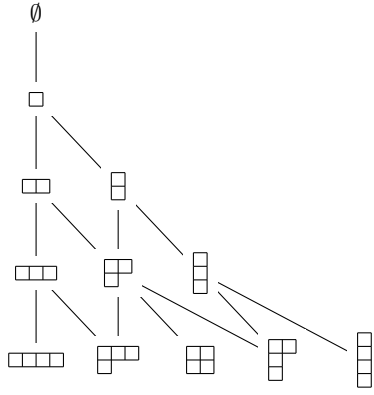
\begin{figure}[ht]
\begin{center}
\begin{tikzpicture}[xscale=3*\UNIT, yscale=-3.2*\UNIT]
  \coordinate (0) at (0,0);
  \foreach \x in {0,...,0}{\coordinate (1\x) at (2*\x,2);}
  \foreach \x in {0,...,1}{\coordinate (2\x) at (2*\x,4);}
  \foreach \x in {0,...,2}{\coordinate (3\x) at (2*\x,6);}
  \foreach \x in {0,...,4}{\coordinate (4\x) at (2*\x,8);}
  
  \draw (0)--(10);

  \draw (10)--(20) (10)--(21);

  \draw (20)--(30) (20)--(31) (21)--(31) (21)--(32); 

  \draw (30)--(40) (30)--(41) (31)--(41) (31)--(42) (31)--(43)
  (32)--(43) (32)--(44);

\def\NULL{\footnotesize$\emptyset$}
\begin{scope}[every node/.style={fill=white}]
  \node at (0) {\NULL};

  \node at (10) {\PART{1}};

  \node at (20) {\PART{2}};
  \node at (21) {\PART{1,1}};

  \node at (30) {\PART{3}};
  \node at (31) {\PART{2,1}};
  \node at (32) {\PART{1,1,1}};

  \node at (40) {\PART{4}};
  \node at (41) {\PART{3,1}};
  \node at (42) {\PART{2,2}};
  \node at (43) {\PART{2,1,1}};
  \node at (44) {\PART{1,1,1,1}};

\end{scope}
\end{tikzpicture}
\end{center}
\caption{Bratteli diagram up to level $4$}\label{Bratteli} \label{fig:1}
\end{figure}
%

If $\pi$ is a walk to $\lambda$, we construct a sequence of standard
tableaux, as follows. The vertices $\pi^{(j)}$ and $\pi^{(j-1)}$
differ by an addable node.  Enter $j$ into that addable node, for each
$j$. In this way we obtain a standard tableau for each vertex in the
walk. We denote the final standard tableau in that sequence by
$\mathsf{T}(\pi)$. One easily reverses the process, obtaining a
bijection
\[
\pi \mapsto \mathsf{T}(\pi)
\]
between walks on the Bratteli diagram and the set of standard
tableaux. Thus we could just as well index our maximal vectors by
standard tableaux, but we prefer to use walks in the Bratteli diagram
for that purpose.



\section{Preliminaries}\label{s:pre}\noindent
We establish basic notational conventions and definitions.

\subsection*{Root datum}\noindent
Let $\fgl_n = \fgl_n(\C)$ be the general linear Lie algebra of $n
\times n$ matrices over~$\C$. Let $\{ e_{ij} \}_{i,j = 1,\dots, n}$ be
the standard basis of matrix units.  As a Lie algebra, $\fgl_n$ is
generated by the elements
\[
H_1, \dots, H_n \quad\text{and}\quad e_i, \dots, e_{n-1},\quad
f_i, \dots, f_{n-1}
\]
where $H_i = e_{ii}$, $e_i= e_{i,i+1}$, and $f_i= e_{i+1,i}$.  We fix
the Cartan subalgebra
\[
\mathfrak{h}= \textstyle \sum_{i=1}^n \C H_i \text{ and set }
\mathfrak{h}^* = \Hom_\C(\mathfrak{h}, \C)
\]
with basis $\{\ep_1, \dots, \ep_n\}$ dual to $\{H_1, \dots, H_n\}$.
Let $\bil{-}{-}: \mathfrak{h} \times \mathfrak{h}^* \to \C$ be the
dual pairing (evaluation) given by $\bil{H_i}{\ep_j} = \ep_j(H_i) =
\delta_{ij}$. For each $i = 1, \dots, n-1$ set
\begin{equation*} 
  \alpha^\vee_i = H_i - H_{i+1} \quad \text{and} \quad
  \alpha_i = \ep_i - \ep_{i+1} .
\end{equation*}
Let $\X^\vee = \sum_{i=1}^n \Z H_i$ (the coweight lattice) and $\X =
\sum_{i=1}^n \Z \ep_i$ (the weight lattice). The pairing $\bil{-}{-}:
\mathfrak{h} \times \mathfrak{h}^* \to \C$ restricts to a perfect
pairing
\[
\bil{-}{-}: \X^\vee \times \X \to \Z
\]
of free abelian groups.  Since the pairing is perfect, we can (and
will) identify $\X$ with the dual $\Hom_\Z(\X^\vee,\Z)$ and also
identify $\X^\vee$ with the dual $\Hom_\Z(\X,\Z)$. Thus, we will write
\begin{equation}
  \bil{\mu}{\lambda} = \mu(\lambda) = \lambda(\mu)
\end{equation}
interchangeably, for any $\mu \in \X^\vee$, $\lambda \in \X$.  This
nonstandard notation makes certain calculations more palatable. Set
\begin{align*}
\Pi^\vee &= \{ \alpha^\vee_1, \dots, \alpha^\vee_{n-1}\}
\quad\text{(simple coroots)}, \\ \Pi &= \{ \alpha_1, \dots,
\alpha_{n-1}\}\quad \text{(simple roots)}.
\end{align*}
The quadruple $(\X, \Pi, \X^\vee, \Pi^\vee)$ defines the usual root
datum associated with $\fgl_n$, in the sense of \cite{Lusztig}. The
associated Cartan matrix is $(\mathsf{a}_{ij})_{i,j=1}^{n-1}$, of type
$\mathsf{A}_{n-1}$, where $\mathsf{a}_{ij} = \alpha^\vee_i(\alpha_j)$.

\begin{rmk}
The notion of a root datum goes back to Demazure; see
e.g.\ \cites{Jantzen:RAGS,Springer}. It was generalized by Lusztig to
include the Kac--Moody Lie algebras. A similar concept is called a
``Cartan datum'' in \cite{Hong-Kang}.
\end{rmk}

\subsection*{The algebra $\UU_q = \UU_q(\fgl_n)$}
Let $\Bbbk$ be a field. We fix an element $q \ne 0$ in $\Bbbk$.
\emph{Throughout this paper, we assume that $q$ is not a root of
unity.}  The root datum $(\X, \Pi, \X^\vee, \Pi^\vee)$ defines the
quantized enveloping algebra $\UU_q(\fgl_n)$,%
\footnote{The algebra $\UU_q(\fgl_n)$ first appeared in \cite{Jimbo},
but (following Lusztig) we have slightly altered relation~\eqref{U2}.
Jimbo's version of the algebra also appears in \cite{KS}.}
as the $\Bbbk$-algebra generated by the elements $E_i$, $F_i$ ($i \in
\{1, \dots, n-1\}$) and $K_i$, $K_i^{-1}$ ($i \in \{1, \dots, n\}$)
subject to the defining relations:
\begin{gather*}
  K_i K_j = K_j K_i , \quad K_i K_i^{-1} = 1 = K_i^{-1}K_i
  \tag{U1}\label{U1}\\
  E_i F_j - F_j E_i = \delta_{ij} \frac{\tK_i - \tK_i^{-1}}{q-q^{-1}}
  \quad (\text{where } \tK_i = K_i K_{i+1}^{-1}) \tag{U2}\label{U2}\\
  K_i E_j = q^{H_i(\alpha_j)} E_j K_i , \quad
  K_i F_j = q^{-H_i(\alpha_j)} F_j K_i \tag{U3}\label{U3}\\
  E_i^2 E_j - (q+q^{-1}) E_iE_jE_i + E_jE_i^2 = 0
  \quad\text{ if } |i-j|=1 \tag{U4}\label{U4}\\
  E_iE_j = E_jE_i \quad\text{ if } |i-j| > 1 \tag{U5}\label{U5}\\
  F_i^2 F_j - (q+q^{-1}) F_iF_jF_i + F_jF_i^2 = 0
  \quad\text{ if } |i-j|=1 \tag{U6}\label{U6}\\
  F_iF_j = F_jF_i \quad\text{ if } |i-j| > 1 . \tag{U7}\label{U7}
\end{gather*}
For any $\mu = \sum_{i=1}^n m_i H_i$ in $\X^\vee$, we set $K_\mu =
\prod_{i=1}^n K_i^{m_i}$. (In particular, $K_i = K_{H_i}$ and $\tK_i = K_i
K_{i+1}^{-1} = K_{\alpha_i^\vee}$.)  Note that relations
\eqref{U3} are equivalent to the relations
\[
  K_\mu E_j = q^{\mu(\alpha_j)} E_j K_\mu , \quad
  K_\mu F_j = q^{-\mu(\alpha_j)} F_j K_\mu \tag{U$3'$}\label{U3'}
\]
for any $\mu \in \X^\vee$, $j = 1, \dots, n-1$. We have 
$\UU_q = \UU_q^- \UU_q^0 \UU_q^+$ (the triangular decomposition)
where $\UU_q^-$ (resp., $\UU_q^+$) is the subalgebra generated by 
the $F_i$ (resp., the $E_i$) and $\UU_q^0$ is the subalgebra generated by
the $K_j$. 

The algebra $\UU_q = \UU_q(\fgl_n)$ is a Hopf algebra (in more than one
way). We will use the coproduct $\Delta: \UU_q \to \UU_q \otimes \UU_q$ is
defined on generators by the rules:
\begin{equation}\label{e:Delta}
\begin{aligned}
  \Delta(E_i) &= E_i \otimes 1 + \tK_i \otimes E_i\\
  \Delta(F_i) &= F_i \otimes \tK_i^{-1} + 1 \otimes F_i \\
  \Delta(K_\mu) &= K_\mu \otimes K_\mu 
\end{aligned}
\end{equation}
for any $i = 1, \dots, n-1$ and any $\mu$ in $\X^\vee$. This makes a
tensor product $V \otimes V'$ of $\UU_q$-modules into a
$\UU_q$-module. Specifically, $V \otimes V'$ is naturally a $\UU_q
\otimes \UU_q$-module. To make it a $\UU_q$-module, compose the
corresponding representation
\[
\UU_q \otimes \UU_q \to \End(V \otimes V')
\]
with $\Delta: \UU_q \to \UU_q \otimes \UU_q$. We will not need the
counit or the antipode from the Hopf algebra structure, so we omit
their definition, which can be found for instance in
\cites{Lusztig,Jantzen:LQG}.

\subsection*{The algebra $\UU_q(\fsl_n)$.}
The subalgebra of $\UU_q(\fgl_n)$ generated by the $E_i$, $F_i$, and
the $\tK_i^{\pm 1}$ for $i = 1, \dots, n-1$ is isomorphic to
$\UU_q(\fsl_n)$.  Restricting the Hopf algebra maps to this subalgebra
makes it a Hopf algebra.

\subsection*{Specialization}\noindent
We write $\UU_x = \UU_x(\fgl_n)$ for the algebra
defined by the generators and relations \eqref{U1}--\eqref{U7}, in the
special case where $q=x$ is an indeterminate and $\Bbbk = \Q(x)$ is
the field of rational functions in $x$. The algebra $\UU_x$ is often
called the \emph{generic} algebra. Let $\A = \Z[x,x^{-1}]$ be the ring
of Laurent polynomials in $x$. Define
\[
[m]_x = \frac{x^{m} - x^{-m}}{x - x^{-1}} . 
\]
This is the (balanced form of) the Gaussian integer corresponding to
the integer $m$. We have $[0]_x = 0$, $[1]_x = 1$, and $[-m]_x =
-[m]_x$ for any $m$. If $m \ge 1$ then $[m]_x = \sum_{t=0}^{m-1}
x^{m-1-2t}$. Thus $[m]_x \in \A$ for any $m \in \Z$.  Write
\[
[m]_x^! = [1]_x [2]_x \cdots [m]_x \quad \text{ for any $m \ge 0$}.
\]
As usual, we agree that $[0]_x^! = 1$. For any nonnegative integer
$m$, define the quantum divided powers of the generators by
\[
F_i^{(m)} = \frac{F_i^m}{[m]_x^!}, \quad E_i^{(m)} =
\frac{E_i^m}{[m]_x^!}.
\]
There are two standard $\A$-forms of $\UU_x$, due respectively to
DeConcini--Kac and Lusztig, but we will only need the Lusztig version
$\UU_\A$, defined by
\begin{equation*}
  \UU_\A = \A\text{-subalgebra of $\UU_x$ generated by $E_i^{(a)}$,
    $F_i^{(b)}$, and the $K_j^{\pm 1}$}
\end{equation*}
for all $i = 1, \dots, n-1$, $j = 1, \dots, n$, and $a,b \ge 0$. This
is a quantum analogue of the Kostant $\Z$-form of the $\Q$-enveloping
algebra of the Lie algebra $\fgl_n(\Q)$. It is easily checked that
$\UU_\A \otimes_\A \Q(x)$ is generated by the elements $E_i \otimes
1$, $F_i \otimes 1$, and $K_j \otimes 1$, and that the algebra map
sending $E_i \otimes 1 \mapsto E_i$, $F_i \otimes 1 \mapsto F_i$, and
$K_j^{\pm 1} \otimes 1 \mapsto K_j^{\pm 1}$ defines an algebra
isomorphism $\UU_\A \otimes_\A \Q(x) \cong \UU_x$, so $\UU_\A$ really
is an $\A$-form of $\UU_x$.

For any commutative ring $\Bbbk$ and any invertible element $q$ in
$\Bbbk$, one regards $\Bbbk$ as an $\A$-algebra by means of the
natural ring morphism $\text{ev}_q: \A \to \Bbbk$ sending $x$ to $q$
(and $x^{-1}$ to $q^{-1}$). Define $\UU_q = \UU_q(\fgl_n)$ to be
specialized algebra
\begin{equation}\label{e:U_q}
\UU_q = \UU_\A \otimes_\A \Bbbk.
\end{equation}
At first glance this looks like a conflict of notation, because we
previously defined $\UU_q$ by generators and relations.  But we are
assuming that $q$ is not a root of unity, and in that case it is well
known that the two definitions give isomorphic algebras; see
e.g.~\cite{Jantzen:98}*{p.~118} for a summary and \cite{Jantzen:LQG}
for the arguments.  

As a point of notation, the evaluation map $\text{ev}_q$ considered
above allows one to define the elements $[m]_q$ and $[m]_q^!$ in
$\Bbbk$ by ``evaluating $x$ to $q$.'' More precisely, we define
\begin{equation}
[m]_q = \text{ev}_{q}([m]_x) \quad \text{and} \quad [m]_q^! =
\text{ev}_{q}([m]_x^!)
\end{equation}
by taking the images of $[m]_x$ and $[m]_x^!$ under the map
$\text{ev}_q$.

\subsection*{Representations and weights}\noindent
As usual, we identify the set $\X$ of weights with $\Z^n$ by means of
the isomorphism $\sum_{i=1}^n \lambda_i \ep_i \mapsto (\lambda_1,
\dots, \lambda_n)$.  If $M$ is a $\UU_q$-module of type $\mathbf{1}$,
then $M = \bigoplus_{\lambda \in \X} M_\lambda$, where the weight
space $M_\lambda$ is given by
\begin{equation*}
M_\lambda = \{ v\in M \mid K_i v = q^{H_i(\lambda)} v = q^{\lambda_i}
v, \text{ all } i = 1, \dots, n \}.
\end{equation*}
More generally, if $v \in M_\lambda$ then $K_\mu v = q^{\mu(\lambda)}
v$, for all $\mu \in \X^\vee$; in particular,
\begin{equation}
\tK_i v = K_{\alpha^\vee_i} v = q^{\alpha^\vee_i(\lambda)} v =
q^{\lambda_i - \lambda_{i+1}} v
\end{equation}
for all $i = 1, \dots, n-1$.


We have $\X^+ = \{\lambda \in \X \mid \alpha_i^\vee(\lambda) \ge 0
\text{ for all $i = 1, \dots, n-1$}\}$.  As $\alpha_i^\vee(\lambda)
= \lambda_i - \lambda_{i+1}$, this means that under the identification
of $\X$ with $\Z^n$,
\[
\X^+ = (\lambda_1, \dots, \lambda_n) \in \Z^n \mid \lambda_1 \ge
\cdots \ge \lambda_{n-1} \ge \lambda_n \}.
\]
We let $\Lambda = \N^n$ be the set of polynomial weights.  The set of
dominant polynomial weights is $\Lambda^+ = \Lambda \cap \X^+$. Thus
\[
\Lambda^+ = \{(\lambda_1, \dots, \lambda_n) \in \Z^n \mid \lambda_1
\ge \cdots \ge \lambda_{n-1} \ge \lambda_n \ge 0 \}.
\]
Dropping trailing zeros in its elements, the set $\Lambda^+$ naturally
identifies with the set of \emph{partitions} into not more than $n$
parts. If we need to vary $n$, we sometimes write $\Lambda =
\Lambda(n)$ and $\Lambda^+ = \Lambda^+(n)$.  We have
\[
\Lambda(n) = \textstyle \bigsqcup_{r\ge0} \Lambda(n,r)
\]
where $\Lambda(n,r) = \{\lambda \in \N^n \mid \sum_i
\lambda_i = r \}$ is the inverse image of $r$ under the map $\Z^n \to
\Z$ given by $\lambda \mapsto \sum_i \lambda_i$.
Intersecting this decomposition with $\Lambda^+(n)$
we get a corresponding decomposition
\[
\Lambda^+(n) = \textstyle \bigsqcup_{r\ge0} \Lambda^+(n,r)
\]
where $\Lambda^+(n,r) = \Lambda(n,r) \cap \Lambda^+(n)$. The set
$\Lambda^+(n,r)$ is the set of partitions of $r$ into not more than
$n$ parts.

We will always identify a partition $\lambda$ with its Young diagram,
consisting of the set of nodes $(i,j) \in \N \times \N$ satisfying $j
\le \lambda_i$. (There are $\lambda_j$ nodes in the $j$th row for each
$j$.)  We identify nodes with boxes in the usual fashion. Following
Kleshchev, we say that a node at position $(j,\lambda_j+1)$ is
\emph{addable} if $\lambda+\varepsilon_j$ is a partition.

\section{Schur--Weyl duality}\label{s:SWD}\noindent
Let $\Mod{\UU_q}$ be the category of finite dimensional
$\UU_q$-modules of type $\mathbf{1}$ admitting a weight space
decomposition.  From now on, all of our calculations take place in
this category.  Let $V_q(\lambda)$ be the (type $\mathbf{1}$) Weyl
module of highest weight $\lambda$. Then $V_q(\lambda)$ is an object
in $\Mod{\UU_q}$.  Since we are assuming that $q$ is not a root of
unity, $V_q(\lambda)$ is a simple module. In fact, we have the
following well known result.

\begin{thm}[Lusztig, Andersen--Polo--Wen, etc]\label{t:Uq-Mod}
Suppose that $0 \ne q \in \Bbbk$ is not a root of unity. Then:
\begin{enumerate}
\item $\Mod{\UU_q}$ is a semisimple braided tensor category. 
\item $\{ V_q(\lambda) \mid \lambda \in \X^+ \}$ is a complete set of
  simple $\UU_q$-modules in $\Mod{\UU_q}$.
\item The character of $V_q(\lambda)$ is given by Weyl's character
  formula.
\end{enumerate}
\end{thm}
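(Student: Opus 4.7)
The plan is to reduce everything to two inputs: the Lusztig $\A$-form already set up in the excerpt, which realizes $V_q(\lambda)$ as a specialization $V_\A(\lambda)\otimes_\A\Bbbk$ of the generic simple highest weight $\UU_x$-module $V_x(\lambda)$; and a Shapovalov-type contravariant form on $V_\A(\lambda)$, whose Gram determinant in each weight space is a product of quantum integers $[k]_x$ with $k\ge 1$. Since $q$ is not a root of unity, $[k]_q\ne 0$ for all $k\ge 1$, so the specialized form remains nondegenerate on every weight space of $V_q(\lambda)$; its radical is the unique maximal submodule, which is therefore zero, delivering simplicity of $V_q(\lambda)$. For the character formula in (c), the form $V_\A(\lambda)$ is $\A$-free with weight-space ranks equal to the classical weight multiplicities (visible from Lusztig's canonical basis or a PBW-type argument), so the character is preserved under base change to any $\Bbbk$, and over $\Q(x)$ it is Weyl's formula (the quantum Kostant multiplicity theorem).

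For the completeness assertion in (b), given a simple $L\in\Mod{\UU_q}$ I would pick a weight $\lambda$ of $L$ that is maximal in dominance. A nonzero $v\in L_\lambda$ is annihilated by every $E_i$, since $E_iv$ would lie in $L_{\lambda+\alpha_i}=0$. The universal property of the $q$-Weyl module then produces a nonzero map $V_q(\lambda)\to L$, forced to be an isomorphism by simplicity of both ends. Dominance of $\lambda$ comes from integrability: finite-dimensional type-$\mathbf{1}$ modules have weights lying in the convex hull of Weyl orbits of dominant weights, forcing $\lambda\in\X^+$.

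For semisimplicity in (a), I would induct on length and reduce to showing that every extension $0\to V_q(\mu)\to M\to V_q(\lambda)\to 0$ splits, i.e.\ that $\operatorname{Ext}^1_{\UU_q}(V_q(\lambda),V_q(\mu))=0$. When $\lambda\ne\mu$ this follows from a linkage principle: Rosso's quantum Harish--Chandra isomorphism at non-root-of-unity $q$ separates distinct highest weights by central character. When $\lambda=\mu$, the weight space $M_\lambda$ is two-dimensional and entirely annihilated by every $E_i$ (because $\lambda$ is still a maximal weight of $M$); any vector of $M_\lambda$ not in the submodule generates a second copy of $V_q(\lambda)$ splitting the sequence. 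The braided tensor category structure in (a) then comes from the Drinfeld--Jimbo universal $R$-matrix, assembled from Lusztig's quasi-$R$-matrix and a diagonal factor, acting on tensor products of objects of $\Mod{\UU_q}$ and satisfying the braid axioms. The main obstacle is this semisimplicity step: everything else is essentially formal once $V_q(\lambda)$ is known simple, but the vanishing of extensions requires genuine input, either the quantum center computation (Rosso, Joseph--Letzter) or a descent argument lifting an extension back to the generic algebra $\UU_x$, where semisimplicity over $\Q(x)$ is already available, and then showing that $\A$-integrally the lifted extension must already be split.
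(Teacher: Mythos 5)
The paper does not give an argument here at all: the entire proof of Theorem~\ref{t:Uq-Mod} is the one line ``A proof can be found in Jantzen's book \cite{Jantzen:LQG}.'' Your sketch reconstructs, essentially along the lines of that reference and of Andersen--Polo--Wen, what such a proof contains, so it is a genuinely different choice of presentation even though the mathematical content is the expected one. Two points deserve a caveat. First, the assertion that the Gram determinant of the contravariant form on $V_\A(\lambda)_\mu$ factors as a product of quantum integers $[k]_x$ with $k\ge 1$ is exactly the quantum Shapovalov-type determinant formula (De~Concini--Kac for Verma modules, Andersen--Polo--Wen for the Weyl module setting); it is the right tool and it does give simplicity for non-root-of-unity $q$, but it is itself a substantial theorem, not an elementary observation, and should be flagged as the main external input rather than treated as folklore. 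Second, the alternative route you propose for $\operatorname{Ext}^1$-vanishing --- ``lift an extension back to the generic algebra $\UU_x$ and descend a splitting $\A$-integrally'' --- is not a priori available: specialization can create extensions that have no flat $\A$-lift, so that sentence as written is not a proof strategy. Your first route (separation of central characters via the quantum Harish--Chandra isomorphism for $\lambda\ne\mu$, together with the elementary self-extension argument when $\lambda=\mu$) is the one that actually closes the argument. Everything else --- maximal weight is dominant by $\fsl_2$-integrability, universal property of $V_q(\lambda)$ giving completeness, $R$-matrix from Lusztig's quasi-$R$-matrix giving the braiding --- is correct and standard.
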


\begin{proof}
A proof can be found in Jantzen's book \cite{Jantzen:LQG}. 
\end{proof}

Let $V_q = V_q(1)$ be the vector representation of $\UU_q =
\UU_q(\fgl_n)$.  Let $\{v_i\}_{i=1}^n$ be the standard basis of weight
vectors for $V_q$, where the weight of $v_i$ is $\ep_i$, for each $i =
i, \dots, n$. Then
\begin{equation}
  K_{j} v_i = q^{H_j(\ep_i)} v_i = q^{\delta_{ij}} v_i \label{e:K_ep-action}
\end{equation}  
for all $i,j \in \{1,\dots, n\}$ and thus (as $\tK_j = K_{\alpha_j^\vee}$)
\begin{equation}\label{e:tKj-action}
\tK_j v_i = q^{\alpha^\vee_j(\ep_i)} v_i = q^{\delta_{i,j} -
  \delta_{i,j+1}} v_i \quad (j = 1, \dots, n-1)
\end{equation}
For each $1 \le i \le n-1$, the operator $F_i$ (resp., $E_i$) sends
$v_{i}$ to $v_{i+1}$ (resp., $v_{i+1}$ to $v_{i}$) and sends all other
$v_k$ to $0$.


Consider the $r$th tensor power $V_q^{\otimes r}$.  The set $\{v_\aaa
\mid \aaa \in I(n,r)\}$ is a $\Bbbk$-basis of $V_q^{\otimes r}$, where
$I(n,r) = \{1,\dots, n\}^r$ and where $v_\aaa = v_{a_1} \otimes \cdots
\otimes v_{a_r}$ for each $\aaa = (a_1, \dots, a_r)$ in $I(n,r)$. 
The symmetric group $\Sym_r$ acts on $I(n,r)$ on the right by $\aaa
\cdot w = (a_{w(a_1)}, \dots, a_{w(a_r)})$. In particular,
\[
\aaa \cdot s_i = (a_1, \dots, a_{i-1}, a_{i+1}, a_i, a_{i+2}, \dots,
a_r),
\]
is the result of interchanging the entries in places $i$ and $i+1$ of
the sequence $\aaa = (a_1, \dots, a_r)$ in $I(n,r)$, where $s_i =
(i,i+1)$ is the transposition in $\Sym_r$ interchanging $i$ with
$i+1$. Then $v_\aaa w = v_{\aaa \cdot w}$ (for $w \in \Sym_r$) defines
the usual place-permutation action of $\Sym_r$ on $V_q^{\otimes r}$.

Let $\HH_q = \HH_q(\Sym_r)$ be the (balanced form of) Iwahori--Hecke
algebra of the symmetric group $\Sym_r$. This is the $\Bbbk$-algebra
defined by generators $T_1, \dots, T_{r-1}$ satisfying the quadratic
relation
\begin{equation}\label{e:quad}
  (T_i - q)(T_i + q^{-1}) = 0 \qquad \text{(for all $i$)}
\end{equation}
along with the usual type $\sf{A}$ braid relations; that is, the quotient of
Artin's braid group algebra by relation \eqref{e:quad}.\footnote{Our
version of $\HH_q(\Sym_r)$ follows the normalization convention of
\cite{Lu:unequal}; cf.~\cite{Lu:83}. It differs from the original
version (e.g., \cites{KL,Jimbo,DJ:Hecke,DJ:Hecke-87,Mathas}) although
the two versions are isomorphic under suitable assumptions on the
ground ring.}
The algebra $\HH_q$
acts on $V_q^{\otimes r}$ on the right by
\begin{equation}
v_\aaa T_i = 
\begin{cases}
  q v_\aaa & \text{ if } a_i = a_{i+1} \\
  v_{\aaa \cdot s_i} & \text{ if } a_i > a_{i+1} \\
  v_{\aaa \cdot s_i} + (q-q^{-1}) v_\aaa & \text{ if } a_i < a_{i+1} .
\end{cases}
\end{equation}
(If $q=1$ then the above action coincides with the usual
place-permutation action of $\Sym_r$.)  The following
was first observed in \cite{Jimbo}.

\begin{lem}[Jimbo]\label{l:commuting-actions}
The actions of $\HH_q$ and $\UU_q$ on $V_q^{\otimes r}$ commute.
\end{lem}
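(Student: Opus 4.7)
The plan is to reduce to the two-factor case $r=2$ by exploiting the locality of the Hecke action. By inspection of the defining formulas, the right action of $T_i$ on $v_\aaa$ modifies only positions $i$ and $i+1$, so $T_i$ acts on $V_q^{\otimes r}$ as $\mathrm{id}^{\otimes(i-1)} \otimes \tau \otimes \mathrm{id}^{\otimes(r-i-1)}$, where $\tau$ denotes the action of $T_1$ on $V_q \otimes V_q$. Iterating the coproduct formulas \eqref{e:Delta} gives
\[
\Delta^{(r)}(E_j) = \sum_{k=1}^{r} \tK_j^{\otimes(k-1)} \otimes E_j \otimes 1^{\otimes(r-k)},
\]
an analogous formula for $F_j$, and $\Delta^{(r)}(K_\mu) = K_\mu^{\otimes r}$. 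Grouping the summands $k=i$ and $k=i+1$ produces a factor of $\Delta(E_j) = E_j \otimes 1 + \tK_j \otimes E_j$ at positions $(i,i+1)$, while every other summand contributes $\tK_j \otimes \tK_j$ or $1 \otimes 1$ at those positions. These latter contributions are diagonal in the standard basis, hence commute with $\tau$ since $\tau$ preserves total weight (swapping $a_i$ and $a_{i+1}$ leaves $\ep_{a_i} + \ep_{a_{i+1}}$ unchanged). Thus commutation of $T_i$ with the $\UU_q$-action will reduce to commutation of $\tau$ with each of $\Delta(E_j)$, $\Delta(F_j)$, and $\Delta(K_\mu)$ on $V_q \otimes V_q$.

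For $\Delta(K_\mu) = K_\mu \otimes K_\mu$ the operator is diagonal, so weight preservation of $\tau$ already gives commutation. For $\Delta(E_j)$ and $\Delta(F_j)$, I would verify commutation directly on the basis $\{v_a \otimes v_b\}$ of $V_q \otimes V_q$, splitting into cases according to the relative positions of $\{a,b\}$ and $\{j,j+1\}$. If $\{a,b\}\cap\{j,j+1\} = \emptyset$ then both $\Delta(E_j)$ and $\tau \Delta(E_j) - \Delta(E_j)\tau$ annihilate $v_a \otimes v_b$. In the remaining cases the Hecke action formula (with its three regimes $a<b$, $a=b$, $a>b$) combined with the eigenvalues for $\tK_j$ from \eqref{e:tKj-action} yield the desired equality after reconciling the scalars $q$, $q^{-1}$, and $q - q^{-1}$.

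The hard part is bookkeeping in this final case analysis rather than anything conceptual. A more abstract alternative is available: the operator $\tau$ is, up to normalization, the composition of the universal $R$-matrix for $\UU_q$ with the flip, and is therefore a $\UU_q$-module endomorphism of $V_q \otimes V_q$ by the standard quasi-triangular identity $R\Delta = \Delta^{\mathrm{op}}R$. I would nevertheless favor the direct verification, since it is short in each case and keeps the paper self-contained without importing the full quasi-triangular formalism.
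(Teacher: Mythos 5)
Your proposal follows essentially the same approach as the paper: isolate the local action of $T_i$ at positions $i,i+1$, iterate the coproduct and split it so that the only nontrivial commutation is $\Delta(E_j)$ (and $\Delta(F_j)$) against the two-factor operator on $V_q\otimes V_q$, using the fact that $\tK_j\otimes\tK_j$ and $1\otimes 1$ commute with that operator, and then verify the two-factor case directly on the basis. The only differences are cosmetic: you additionally spell out the trivial case of $K_\mu$, and you mention (but do not use) the quasi-triangular alternative, which the paper relegates to a remark.
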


\begin{proof}
For any $j = 1, \dots, r-1$, the action of $T_j$ on $V_q^{\otimes r}$
may be rewritten in the form $T_j = I^{\otimes (j-1)} \otimes T
\otimes I^{\otimes (r-1-j)}$, where $I$ is the identity operator on
$V_q$ and where $T: V_q \otimes V_q \to V_q \otimes V_q$ is given by
\[
T(v_s \otimes v_t) =
\begin{cases}
  q \, v_s \otimes v_t & \text{ if } s=t \\
  (q-q^{-1})\, v_s \otimes v_t + v_t \otimes v_s & \text{ if } s<t \\
  v_t \otimes v_s & \text{ if } s>t.
\end{cases}
\]
For any $i = 1, \dots, n-1$, the action of $E_i$ on $V_q^{\otimes r}$ is
given by
\[
\Delta^{(r-1)}(E_i) = \sum_{s = 1}^{r-1} \tK_i^{\otimes (s-1)} \otimes
(E_i \otimes I + \tK_i \otimes E_i) \otimes I^{\otimes (r-1-s)} .
\]
This is obtained by iterating the coproduct $\Delta$ defined in
equation~\eqref{e:Delta}.  One easily checks that $(\tK_i \otimes
\tK_i)T = T(\tK_i \otimes \tK_i)$.  Obviously $(I \otimes I)T = T(I
\otimes I)$. From this it is clear that $E_j$ and $T_i$ on tensor
space $V_q^{\otimes r}$ commute if and only if
\[
T (\Delta E_i) = (\Delta E_i)T
\]
as operators on $V_q \otimes V_q$.  By~\eqref{e:Delta}, the above equality
holds if and only if
\[
T(E_i \otimes I + \tK_i \otimes E_i) = (E_i \otimes I + \tK_i \otimes
E_i)T
\]
as operators on $V_q \otimes V_q$. That this latter identity holds is
easy to check directly, using the fact that $E_i(v_{i+1}) = v_i$ and
$E_i(v_t) = 0$ for all $t \ne i+1$. The entirely similar argument for
$F_i$ in place of $E_i$ is left to the reader.
\end{proof}

\begin{rmk}
(i) Jimbo used a slightly different version of $\HH_q$, in which the
  $T_i$ satisfy \eqref{e:quad} but with $q$ and $q^{-1}$ interchanged.

(ii) The operator $T$ used above is closely related to the $R$-matrix
  formalism in connection with the Yang--Baxter equation in statistical
  mechanics.
\end{rmk}

\begin{thm}[Jimbo]\label{t:SWD}
Suppose that $0 \ne q \in \Bbbk$ is not a root of unity.
The commuting actions of $\UU_q$ and $\HH_q$ induce morphisms
\[
\UU_q \to \End^{}_{\HH_q}(V_q^{\otimes r}) \quad\text{and} \quad \HH_q
\to \End^{}_{\UU_q}(V_q^{\otimes r}),
\]
each of which is surjective. In other words, the image of
each action is equal to the full centralizer of the other.
\end{thm}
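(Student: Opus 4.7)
The plan is to prove both surjections by comparing dimensions, leveraging semisimplicity of $\Mod{\UU_q}$ (Theorem~\ref{t:Uq-Mod}) and of $\HH_q$ (valid since $q$ is not a root of unity, so $[r]_q^! \ne 0$), and invoking the double centralizer theorem. I will show directly that the image of $\HH_q$ in $\End_\Bbbk(V_q^{\otimes r})$ coincides with $\End_{\UU_q}(V_q^{\otimes r})$; the surjectivity of the first map then follows automatically.

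First I would use semisimplicity to write
\[
V_q^{\otimes r} \cong \bigoplus_{\lambda \in \Lambda^+(n,r)} V_q(\lambda) \otimes W_\lambda
\]
as $(\UU_q,\HH_q)$-bimodules, with $W_\lambda = \Hom_{\UU_q}(V_q(\lambda),V_q^{\otimes r})$. The multiplicities $m_\lambda = \dim_\Bbbk W_\lambda$ are read off from the character identity
\[
(x_1 + \cdots + x_n)^r = \sum_{\lambda \in \Lambda^+(n,r)} K_{\lambda,(1^r)}\, s_\lambda(x_1,\dots,x_n),
\]
since the left side is the character of $V_q^{\otimes r}$ and each $s_\lambda$ is the character of $V_q(\lambda)$ by Weyl's formula (Theorem~\ref{t:Uq-Mod}(c)). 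Because $K_{\lambda,(1^r)}$ is the number of standard tableaux of shape $\lambda$, which equals $\dim_\Bbbk \Specht_q^\lambda$, this yields
\[
\dim_\Bbbk \End_{\UU_q}(V_q^{\otimes r}) = \sum_{\lambda \in \Lambda^+(n,r)} (\dim_\Bbbk \Specht_q^\lambda)^2.
\]

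Next I would identify which Specht modules occur as $\HH_q$-constituents of $V_q^{\otimes r}$, since this determines the image of $\HH_q$ via its split semisimple decomposition. Decomposing along $\UU_q$-weight spaces, each $(V_q^{\otimes r})_\mu$ with $\mu \in \Lambda(n,r)$ is $\HH_q$-stable, and a direct check of the $\HH_q$-action on the monomial basis identifies it with the $q$-permutation module $M^\mu_q$ induced from the trivial representation of the parabolic subalgebra $\HH_q(\Sym_\mu)$. In the semisimple case $M^\mu_q \cong \bigoplus_\lambda (\Specht_q^\lambda)^{\oplus K_{\lambda\mu}}$, so summing over $\mu \in \Lambda(n,r)$ the multiplicity of $\Specht_q^\lambda$ in $V_q^{\otimes r}$ as an $\HH_q$-module equals $\sum_\mu K_{\lambda\mu} = \dim_\Bbbk V_q(\lambda)$, which is positive exactly for $\lambda \in \Lambda^+(n,r)$. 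Hence the image of $\HH_q$ is $\bigoplus_{\lambda \in \Lambda^+(n,r)} \End_\Bbbk(\Specht_q^\lambda)$ and has the same dimension as $\End_{\UU_q}(V_q^{\otimes r})$, forcing the inclusion to be an equality. The surjection $\UU_q \to \End_{\HH_q}(V_q^{\otimes r})$ is then immediate from the double centralizer theorem applied to the semisimple $\UU_q$-module $V_q^{\otimes r}$, since its $\HH_q$-endomorphisms and its $\End_{\UU_q}$-endomorphisms now coincide.

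The main obstacle will be the identification of $(V_q^{\otimes r})_\mu$ with the $q$-permutation module $M^\mu_q$ together with its semisimple Specht decomposition. The $T_i$-action formula involves a correction term precisely when $a_i < a_{i+1}$, so translating the basis $\{v_\aaa\}$ of a given weight space into the standard permutation basis requires a careful bookkeeping. The Specht decomposition itself is part of the Dipper--James theory of $q$-Schur algebras, whose semisimple version is available over an arbitrary field whenever $q$ is not a root of unity.
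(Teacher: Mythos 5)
Your argument follows the same overall strategy as the paper: use semisimplicity of $\Mod{\UU_q}$ and of $\HH_q$, prove one of the two surjections by a dimension count, and then obtain the other from the double-centralizer theorem. Where you differ is in how the relevant multiplicities are pinned down. The paper invokes a ``transfer from the classical case'': since $\dim_\Bbbk \Specht_q^\lambda = \dim_\C \Specht^\lambda$ and $\ch V_q(\lambda) = \ch V(\lambda)$, the decomposition numbers of $V_q^{\otimes r}$ in $\Mod{\HH_q}$ and in $\Mod{\UU_q}$ are declared equal to their classical counterparts, which are then quoted from classical Schur--Weyl duality. Your version instead computes both sets of multiplicities from scratch: on the $\UU_q$ side via the character identity $(x_1 + \cdots + x_n)^r = \sum_\lambda K_{\lambda,(1^r)} s_\lambda$, and on the $\HH_q$ side by recognizing each weight space $(V_q^{\otimes r})_\mu$ as a $q$-permutation module $M^\mu_q$ and summing Kostka numbers over $\mu \in \Lambda(n,r)$. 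This is more work, but it makes the argument self-contained (modulo the Dipper--James theory of $M^\mu_q$ and its Young-rule decomposition, which in the semisimple case is standard); by contrast, the paper's ``bijection preserves multiplicities'' step is left at the level of an assertion whose full justification would require something like your $q$-permutation-module argument or a character-specialization argument. Both routes are correct, and they identify the same kernel $\bigoplus_{\lambda \vdash r,\ \ell(\lambda) > n} \End_\Bbbk(\Specht_q^\lambda)$ and the same dimension $\sum_{\lambda \in \Lambda^+(n,r)} (\dim_\Bbbk \Specht_q^\lambda)^2$; yours simply spells out the bookkeeping the paper treats as known.
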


\begin{proof}
Since $q$ is not a root of unity, the category $\Mod{\HH_q}$ of finite
dimensional $\HH_q$-modules is semisimple. Dipper and James
\cites{DJ:Hecke,DJ:Hecke-87} constructed $q$-analogues
$\Specht_q^\lambda$ of Specht modules and showed that
$\{\Specht_q^\lambda \mid \lambda \vdash r\}$ is a complete set of
simple modules in $\Mod{\HH_q}$.  Furthermore, they showed that
$\dim_\Bbbk \Specht_q^\lambda = \dim_\C \Specht^\lambda$, where
$\Specht^\lambda$ is the classical Specht module for the symmetric
group $\C[\Sym_r]$. (This also follows from character theory
\cites{Ram,Geck-Pfeiffer} for $\HH_q$; where it is known that
evaluating $\ch \Specht_q^\lambda$ at $q=1$ gives $\ch
\Specht^\lambda$.)

Thus we have a bijection $\Specht_q^\lambda \mapsto \Specht^\lambda$
mapping simple objects in $\Mod{\HH_q}$ onto simple objects in
$\Mod{\C[\Sym_r]}$. This bijection preserves multiplicities in
semisimple decompositions. By Theorem~\ref{t:Uq-Mod}, we have a
similar bijection $V_q(\lambda) \mapsto V(\lambda)$, where
$V(\lambda)$ is the classical Weyl module for the Lie algebra
$\fgl_n(\C)$. This bijection also preserves multiplicities in semisimple
decompositions. As a result, we have the semisimple decompositions
\begin{equation}\label{e:ss-decomps}
V_q^{\otimes r} \cong \textstyle \bigoplus_\lambda
\mult_\lambda^{\sst} \Specht_q^\lambda \quad \text{and} \quad
V_q^{\otimes r} \cong \textstyle \bigoplus_\lambda \mult_\lambda^{\st}
V_q(\lambda)
\end{equation}
in $\Mod{\HH_q}$ and $\Mod{\UU_q}$, respectively, where
$\mult_\lambda^{\sst}$ and $\mult_\lambda^{\st}$ are respectively
equal to the number of semistandard and standard tableaux of shape
$\lambda$. In both decompositions, the index $\lambda$ ranges over the
set of partitions of $r$ into not more than $n$ parts. We know these
multiplicities because of classical Schur--Weyl duality.

To finish the proof, it is enough to prove one of the claimed
surjectivities, because we then get the other by the standard
double-centralizer property for semisimple algebras. We will now argue
that the map $\HH_q \to \End^{}_{\UU_q}(V_q^{\otimes r})$ is
surjective. Since $\HH_q$ is (split) semisimple,
\[
\HH_q \cong \textstyle \bigoplus_{\lambda \vdash
  r} \End^{}_\Bbbk(\Specht_q^\lambda).
\]
As $\mult_\lambda^{\sst} > 0$ for all $\lambda \in \Lambda^+(n,r)$, it
follows from the first decomposition in \eqref{e:ss-decomps} that the
kernel of the morphism $\HH_q \to \End^{}_{\UU_q}(V_q^{\otimes r})$ is
isomorphic to the direct sum of all $\End^{}_\Bbbk(\Specht_q^\lambda)$
such that $\lambda$ has strictly more than $n$ parts, so its image is
isomorphic to the direct sum of all $\End^{}_\Bbbk(\Specht_q^\lambda)$
over $\lambda \in \Lambda^+(n,r)$. But $\dim_\Bbbk \Specht_q^\lambda =
\mult_\lambda^{\st}$, so the dimension of the image of the morphism is
equal to $\sum_{\lambda \in \Lambda^+(n,r)} (\mult_\lambda^{\st})^2$.

On the other hand, it follows from Schur's lemma and the second
decomposition in \eqref{e:ss-decomps} that the dimension of the
centralizer algebra $\End^{}_{\UU_q}(V_q^{\otimes r})$ is given by the
same sum of squares, so we are finished.
\end{proof}

\begin{cor}\label{c:SWD}
Suppose that $0 \ne q \in \Bbbk$ is not a root of unity.  Then
\begin{enumerate}
\item $V_q^{\otimes r} \cong \bigoplus_{\lambda \in \Lambda^+(n,r)}
  V_q(\lambda) \otimes \Specht_q^\lambda$ as
  $(\UU_q,\HH_q)$-bimodules.
\end{enumerate}
For any $\lambda \in \Lambda^+(n,r)$ there are isomorphisms
\begin{enumerate}\setcounter{enumi}{1}%
\item $V_q(\lambda) \cong \Hom^{}_{\HH_q}(\Specht_q^\lambda, V_q^{\otimes
  r})$, as $\UU_q$-modules.
\item $\Specht_q^\lambda \cong \Hom^{}_{\UU_q}(V_q^\lambda, V_q^{\otimes
  r})$, as $\HH_q$-modules.
\end{enumerate}
\end{cor}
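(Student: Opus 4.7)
The plan is to derive all three statements as a formal consequence of Theorem~\ref{t:SWD} (double centralizer) together with the semisimplicity of $\HH_q$ and of $\Mod{\UU_q}$ already invoked in its proof. This is the standard conclusion one draws from a double centralizer pair acting on a semisimple module.

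First I would exploit the split semisimplicity of $\HH_q$ to write $V_q^{\otimes r}$ in its canonical right $\HH_q$-isotypic decomposition
\[
V_q^{\otimes r} \cong \bigoplus_{\lambda \vdash r} M_\lambda \otimes \Specht_q^\lambda, \qquad M_\lambda := \Hom^{}_{\HH_q}(\Specht_q^\lambda,\, V_q^{\otimes r}).
\]
By Lemma~\ref{l:commuting-actions} the commuting $\UU_q$-action on $V_q^{\otimes r}$ descends to a left $\UU_q$-action on each multiplicity space $M_\lambda$, and the displayed isomorphism is then automatically a $(\UU_q,\HH_q)$-bimodule isomorphism.

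Next I would identify each $M_\lambda$ as a $\UU_q$-module. Schur's lemma applied to the decomposition above yields
\[
\End^{}_{\HH_q}(V_q^{\otimes r}) \cong \bigoplus_{\lambda \vdash r} \End^{}_\Bbbk(M_\lambda),
\]
and by Theorem~\ref{t:SWD} the image of $\UU_q$ is the entire left-hand side. Hence each nonzero $M_\lambda$ is forced to be a simple $\UU_q$-module. To pin down \emph{which} simple module it is, I would compare with the $\UU_q$-isotypic decomposition of $V_q^{\otimes r}$ from \eqref{e:ss-decomps}: the Schur--Weyl identities $\dim V_q(\lambda) = \mult_\lambda^{\sst}$ and $\dim \Specht_q^\lambda = \mult_\lambda^{\st}$, combined with nonnegativity and the sum constraints obtained by forgetting each action, force $M_\lambda \cong V_q(\lambda)$ for $\lambda \in \Lambda^+(n,r)$ and $M_\lambda = 0$ otherwise. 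This establishes (a), and since $M_\lambda$ was defined exactly as $\Hom^{}_{\HH_q}(\Specht_q^\lambda, V_q^{\otimes r})$, also (b).

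Finally, part (c) follows symmetrically: apply $\Hom^{}_{\UU_q}(V_q(\lambda),-)$ to (a), use that the $V_q(\mu)$ are pairwise non-isomorphic simples so that Schur's lemma collapses the direct sum, and read off the surviving summand as $\Specht_q^\lambda$ with its residual right $\HH_q$-action. The main obstacle in the argument is ensuring that the partitions indexing the $\UU_q$- and $\HH_q$-sides genuinely match rather than being permuted by some unknown bijection; this is what the Schur--Weyl dimension/multiplicity comparison above accomplishes, since the coefficients $n_{\lambda\mu}$ in any a priori bimodule decomposition $V_q^{\otimes r} \cong \bigoplus_{\lambda,\mu} n_{\lambda\mu}\, V_q(\mu) \otimes \Specht_q^\lambda$ are uniquely pinned down by the two row/column sum constraints coming from forgetting each of the two actions.
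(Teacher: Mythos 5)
Your overall strategy is exactly what the paper intends by ``standard consequence of the double-centralizer property'': decompose $V_q^{\otimes r}$ into $\HH_q$-isotypic components, use the surjectivity from Theorem~\ref{t:SWD} together with Schur's lemma to conclude each nonzero multiplicity space $M_\lambda = \Hom_{\HH_q}(\Specht_q^\lambda, V_q^{\otimes r})$ is a simple $\UU_q$-module, and then identify these simples. The paper's own proof is essentially a one-line citation, so you are doing the work of spelling this out, and most of it is sound.

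The one place your argument has a genuine gap is the final step, where you claim that the coefficients $n_{\lambda\mu}$ ``are uniquely pinned down by the two row/column sum constraints coming from forgetting each of the two actions.'' That is not true as a matter of linear algebra: even knowing that $(n_{\lambda\mu})$ is a $0$-$1$ matrix with at most one $1$ per row and column, the weighted row sums $\sum_\mu n_{\lambda\mu}\dim V_q(\mu)$ and column sums $\sum_\lambda n_{\lambda\mu}\dim \Specht_q^\lambda$ only force the underlying bijection $\sigma$ to preserve Weyl-module dimensions and Specht-module dimensions. If two distinct partitions $\lambda \ne \mu$ in $\Lambda^+(n,r)$ happened to satisfy $\dim V_q(\lambda) = \dim V_q(\mu)$ and $\dim\Specht_q^\lambda = \dim\Specht_q^\mu$, the transposition $(\lambda\ \mu)$ would also satisfy your constraints. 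So dimension bookkeeping alone does not identify the matching. The usual ways to close this gap are: (i) observe that the multiplicity space $M_\lambda$ has highest weight equal to $\lambda$ (e.g.\ by exhibiting a highest weight vector of weight $\lambda$ in the $\Specht_q^\lambda$-isotypic component, as the paper does later with the $\cc_\pi$), or (ii) reduce to the classical $q=1$ case via the ``representations behave the same'' bijections $V_q(\lambda)\leftrightarrow V(\lambda)$, $\Specht_q^\lambda\leftrightarrow\Specht^\lambda$ that the paper already invokes in the proof of Theorem~\ref{t:SWD}, where the matching is classical Schur--Weyl duality. Either of these would replace the faulty uniqueness claim; the rest of your proposal then goes through as written.
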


\begin{proof}
Part (a) is a standard consequence of the double-centralizer property.
Parts (b) and (c) follow immediately from (a).
\end{proof}

\begin{rmk}
(i) Theorem \ref{t:SWD} was announced in \cite{Jimbo}, for the generic
  case (where $\Bbbk = \C(x)$ and $q = x$).  A more general version,
  which includes the case where $q$ is a root of unity, was proved in
  \cite{DPS}; see also \cite{Martin} and
  \cite{Donkin:qSchur}*{\S4.7}. Our method is very different from the
  methods used in those references. Note that the authors of
  \cite{DPS} replace $q$ by a square root $q^{1/2}$; this is because
  they work with a slightly different normalization of $\HH_q$.
  
(ii) $\End^{}_{\HH_q}(V_q^{\otimes r})$ is isomorphic to the $q$-Schur
algebra $\mathbf{S}_q(n,r)$ introduced by Dipper and James
\cite{DJ1}. This is obvious in the semisimple case, but it works in
general \cites{DJ2,Grojnowski-Lusztig}. A geometric construction of
the $q$-Schur algebra was given in \cite{BLM}.

(iii) Dipper and James \cite{DJ:Hecke} showed that $\HH_q(\Sym_r)$ is
semisimple if and only if $[r]_q^! \ne 0$ in $\Bbbk$. Tensor space
$V_q^{\otimes r}$ is still semisimple under this hypothesis, both in
$\Mod{\HH_q}$ and $\Mod{\UU_q}$. Hence the above results hold in this
slightly more general setting. As a consequence, we see that the
$q$-Schur algebra $\mathbf{S}_q(n,r)$ is split semisimple if $[r]_q^! \ne
0$ in $\Bbbk$. (The converse of this implication is false
\cite{EN}*{Thm.~1.3(A)}.)

%
%

(iv) If we replace $\UU_q(\fgl_n)$ by $\UU_q(\fsl_n)$ then all the
results of this section still hold.
\end{rmk}

\section{Coxeter monomials}\label{s:Coxeter}\noindent
Let $W_n = W(\mathrm{A}_{n-1})$ be the Weyl group associated to the
root datum, and denote its generating set of simple reflections by
$s_1, \dots, s_{n-1}$. As usual, we identify $s_i$ with the
transposition that swaps $i$ with $i+1$. The group $W_n$ is isomorphic
to the symmetric group on $n$ letters.  Recall (see e.g.,
\cite{Humphreys}*{\S3.16}) that a \emph{Coxeter element} of $W_n$ is
an element which can be written as a product of generators in which
each generator appears exactly once.  So there are exactly $2^{n-2}$
distinct Coxeter elements in $W_n$.

\begin{rmk}
Coxeter elements in $W_n$ are important examples of fully commutative
elements.  Recall that an element of $W_n$ is fully commutative if any
reduced expression for $w$ is obtainable from any other by applying
(in adjacent positions) commutation relations of the form $s_is_j =
s_js_i$ where $|i-j|>1$. (This was generalized to Coxeter groups in
\cite{Stembridge}.) In $W_n$, fully commutative elements are the same
as $321$-avoiding permutations \cite{BJS}; their number is 
the $n$th Catalan number $\frac{1}{n+1}\binom{2n}{n}$.
\end{rmk}

Given a reduced expression $w = s_{i_1} \cdots s_{i_{n-1}}$ for a
Coxeter element $w$ in $W_n$, we define
\[
F_w = F_{i_1} \cdots F_{i_{n-1}}.
\]
This element belongs to $\UU^-_q(\fgl_n)$, and its definition is
independent of the choice of reduced expression for $w$. We call such
elements \emph{Coxeter monomials}. Evidently, we have a natural
bijection between the Coxeter elements in $W_n$ and the Coxeter
monomials in $\UU^-_q(\fgl_n)$.  

\begin{example}\label{ex:Coxeter}
The eight Coxeter monomials in $\UU^-_q(\fgl_5)$ are enumerated below:
\[
F_{1234}, F_{2341}, F_{1342}, F_{3421}, F_{1243}, F_{2431}, F_{1432}, F_{4321}
\]
where $F_{1234}$ is shorthand for $F_1F_2F_3F_4$, and so forth.  There
are $4! = 24$ distinct orderings of the numbers $1,2,3,4$ but just $8$
Coxeter monomials. Each of the $24$ possible orderings is equal to one
of the elements listed above, by applying a sequence of commutation
relations of the form \eqref{U7}.
\end{example}

We need to consider \emph{shifted} Coxeter monomials.  If $F_w$ is a
Coxeter monomial then its shift $F_w^+$ is defined by replacing each
$F_i$ by $F_{i+1}$. The shift of a Coxeter monomial in
$\UU^-_q(\fgl_n)$ belongs to $\UU^-_q(\fgl_{n+1})$.  We can iterate
shifting more than once to obtain additional shifted Coxeter
monomials. We extend the same notation to Coxeter elements: if $w$ is
a Coxeter element then $w^+$ denotes the result of replacing each
$s_j$ by $s_{j+1}$ in a reduced expression for $w$. We have $F_w^+ =
F_{w^+}$.  Shifting provides a simple recursive method to generate
(reduced expressions for) all Coxeter elements, and thus generate all
Coxeter monomials, as follows. We define $I_1 = \{s_1\}$ and, for any
$n \ge 1$, we define $I_{n}$ by the disjoint union
\begin{equation}\label{e:I_n}
  I_{n} = \{ s_1 w^+ \mid w \in I_{n-1} \} \sqcup \{ w^+ s_1 \mid
  w \in I_{n-1} \}.
\end{equation}
Then we have the following result, the proof of which is clear from
the preceding analysis.

\begin{lem}\label{l:Coxeter}
For any $n\ge 2$, the set $I_{n-1}$ is the set of distinct Coxeter
elements in $W_n$, and the set $\{ F_w \mid w \in I_{n-1} \}$ is the
set of distinct Coxeter monomials in $\UU^-_q(\fgl_{n})$.
\end{lem}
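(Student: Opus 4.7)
The plan is induction on $n$. The base case $n=2$ is trivial: $W_2 = \{1, s_1\}$ has $s_1$ as its unique Coxeter element, and $I_1 = \{s_1\}$ by definition. The paper has already observed that $F_w$ is well-defined on Coxeter elements and that the correspondence $w \leftrightarrow F_w$ is a bijection between Coxeter elements and Coxeter monomials, so the second assertion of the lemma follows from the first, and the entire task reduces to showing that $I_{n-1}$ (viewed as a subset of $W_n$) coincides with the set of Coxeter elements of $W_n$.

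For the inductive step, I assume $I_{n-1}$ is exactly the set of Coxeter elements of $W_n$ and prove the same for $I_n$ and $W_{n+1}$. Every element of $I_n$ is visibly a Coxeter element of $W_{n+1}$: words of the form $s_1 w^+$ or $w^+ s_1$ with $w \in I_{n-1}$ have length $n$ and use each of $s_1, \dots, s_n$ exactly once. Conversely, given any Coxeter element $v \in W_{n+1}$ with a reduced expression $r$, locate the unique occurrences of $s_1$ and $s_2$ in $r$. Since $s_1$ commutes with $s_3, \dots, s_n$, I can repeatedly apply commutation relations to shuttle $s_1$ toward one end of $r$; the only obstruction is $s_2$. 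So if $s_1$ appears to the left of $s_2$ then $s_1$ can be brought all the way to the front, yielding $v = s_1 u$, and if $s_1$ appears to the right of $s_2$ then $v = u\, s_1$. In either case $u$ uses $s_2, \dots, s_n$ each exactly once, hence $u = w^+$ for some Coxeter element $w$ of $W_n$, which lies in $I_{n-1}$ by induction. Thus $v \in I_n$.

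The main obstacle, to my mind, is disjointness of the two halves of $I_n$. The key claim is that the relative order of the non-commuting generators $s_1$ and $s_2$ in any reduced expression of a Coxeter element is an invariant of the element. To see this, recall that by Matsumoto's theorem any two reduced expressions for a common element are linked by a sequence of braid moves $s_i s_{i+1} s_i \leftrightarrow s_{i+1} s_i s_{i+1}$ and commutations $s_i s_j \leftrightarrow s_j s_i$ for $|i-j|>1$. A braid move presupposes that some simple reflection occurs at least twice, which is impossible in a reduced expression for a Coxeter element; so only commutations are available, and these manifestly cannot swap the relative order of $s_1$ and $s_2$. Since elements of the form $s_1 w^+$ place $s_1$ to the left of $s_2$ and elements of the form $u^+ s_1$ place $s_1$ to the right of $s_2$, the two halves are disjoint. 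Injectivity within each half is a one-line cancellation together with the inductive hypothesis. Combining, $|I_n| = 2\,|I_{n-1}| = 2 \cdot 2^{n-2} = 2^{n-1}$, matching the count of $2^{n-1}$ Coxeter elements of $W_{n+1}$, and the induction closes.
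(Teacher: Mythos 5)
Your proof is correct, and it supplies the details behind what the paper dismisses as ``clear from the preceding analysis'': verifying that the recursion generates exactly the Coxeter elements and, in particular, justifying the disjointness implicit in the $\sqcup$ of \eqref{e:I_n} via the observation that Matsumoto's theorem for a Coxeter element admits only commutation moves (so the relative order of $s_1$ and $s_2$ is invariant). This is exactly the argument the paper is implicitly invoking, just written out.
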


\begin{rmk}\label{r:dist-reps}
This result constructs a set of \emph{distinguished} reduced
expressions for the Coxeter elements, as illustrated in
Example~\ref{ex:Coxeter} above, and similarly for the Coxeter
monomials. Such reduced expressions begin or end with $s_1$ (resp.,
$F_1$).
\end{rmk}

\section{The $\Psi$ operators}\label{s:Psi}\noindent
For the rest of the paper, we fix $0 \ne q \in \Bbbk$ and simplify
notation by suppressing the subscript $q$ in $[a]_q$, writing $[a] =
[a]_q$.  In this section, we inductively define a sequence $\Psi_1,
\Psi_2, \dots$, depending on a given dominant weight $\lambda$, such
that $\Psi_1, \dots, \Psi_n$ belong to $\UU_q^-(\fgl_{n+1})$ for all
$n$, and each $\Psi_j$ is a linear combination of Coxeter monomials in
the variables $F_1, \dots, F_{j-1}$. We regard $\UU_q^-(\fgl_n)$ as
embedded in $\UU_q^-(\fgl_{n+1})$ via the map taking $F_j \mapsto F_j$
if $j<n$. For a given $\lambda$, $\Psi_j$ first appears in
$\UU_q^-(\fgl_{j+1})$ if it is defined, and once defined, it maintains
the same value in $\UU_q^-(\fgl_n)$ for all $n \ge j+1$. We emphasize
that the $\Psi_j$ depend on $\lambda$ although we usually suppress
that dependence in the notation.
In Section~\ref{s:q=1} we explain how the $\Psi_j$ are related to
certain $q$-analogues of Carter's lowering operators.

\begin{defn}\label{d:Psi}
Fix a partition $\lambda$ in $\X^+$ and regard it as an infinite
sequence by appending zeros.

(i) We define integers $c_{n} = c_{n,\lambda}$ and $d_{n} =
d_{n,\lambda}$, depending on the given $\lambda$, by setting $c_1=0$ and
\[
\begin{alignedat}{3}
  c_n &=
  (\alpha_2+\cdots+\alpha_n)^\vee(\lambda) + n-1 &&\quad (\text{for all
  } n \ge 2), \\ d_n &=
  (\alpha_1+\cdots+\alpha_n)^\vee(\lambda) + n-1 && \quad
  (\text{for all } n \ge 1).
\end{alignedat}
\]
Notice that $d_n = c_n+\alpha^\vee_1(\lambda)$ for all $n \ge 1$ and
$d^+_{n-1} + 1 = c_n$ for all $n \ge 2$, where here the shift operator
$+$ replaces each $\alpha_j$ by $\alpha_{j+1}$.
We have
\[
\begin{alignedat}{3}
  c_n &= \lambda_2 - \lambda_{n+1} + n-1 &&\quad (\text{for all
  } n \ge 2), \\ d_n &=
  \lambda_1 - \lambda_{n+1} + n-1 && \quad
  (\text{for all } n \ge 1).
\end{alignedat}
\]
This makes it clear that $c_n$ is zero if and only if $n=1$, while
$d_n=0$ if and only if $n=1$ and $\lambda_1-\lambda_2=0$. 

(ii) Set $\Psi_0 = 1$. Define a sequence of operators $\Psi_1 =
\Psi_{1,\lambda}, \Psi_2 = \Psi_{2,\lambda}, \dots$, depending on
$\lambda$, such that $\Psi_n$ lies in the negative part
$\UU^-_q(\fgl_{n+1})$, for each $n$, by means of the recursion:
\[
\Psi_{1} = \frac{F_1}{[d_1]}, \qquad \Psi_{n} =
\frac{1}{[d_{n}]}\big([c_{n}] F_1 \Psi^+_{n-1} - [c_{n}-1]
\Psi^+_{n-1} F_1\big) \quad\text{if } n \ge 2.
\]
The $+$ superscript applied to a $\Psi_n$ means to replace each $F_j$
by $F_{j+1}$ and also replace each $\alpha_j$ by $\alpha_{j+1}$.  More
generally, a superscript of $+(j)$ means to shift $j$ times.
\end{defn}

\begin{example}\label{ex:1}
For each $j$, we set $a_j = \alpha_j^\vee(\lambda) = \lambda_j -
\lambda_{j+1}$. In the notation of Example~\ref{ex:Coxeter}, we have:
\begin{align*}
\Psi_1 & = \frac{F_1}{[d_1]} \quad \text{if $d_1 \ne 0$}.\\
\Psi_2  &= \frac{1}{[d_2][d_1^+]} \big([1+d_1^+]F_{12} - [d_1^+]F_{21}\big)
\quad \text{if $d_1^+ \ne 0$}.\\
\Psi_3  &= \frac{1}{[d_3][d_2^+][d_1^{++}]} \big(x_1F_{123} - x_2F_{132}
- x_3F_{231} + x_4F_{321} \big) \quad \text{if $d_1^{++} \ne 0$}.
\end{align*}
As $F_{13}=F_{31}$, we have $F_{312} = F_{132}$ and $F_{213} =
F_{231}$; this shows the importance of Coxeter monomials.
In the above, $d_1 = a_1$, $d_2 = a_1+a_2+1$, and $d_3 =
a_1+a_2+a_3+2$. Furthermore,
\begin{alignat*}{2}
x_1 &= [1+d_2^+][1+d_1^{++}], \qquad & x_2 &= [1+d_2^+][d_1^{++}], \\
x_3 &= [d_2^+][1+d_1^{++}], \qquad & x_4 &= [d_2^+][d_1^{++}].
\end{alignat*}
Notice that $\Psi_1$ is defined if and only if $d_1 = a_1 = \lambda_1
- \lambda_2 \ne 0$. As $d_2 \ge 1$ and $d_3 \ge 2$, we see that
$\Psi_2$ is defined if and only if $d^+_1 = a_2 = \lambda_2 -
\lambda_3 \ne 0$, and similarly, $\Psi_3$ is defined if and only if
$d_1^{++} = a_3 = \lambda_3 - \lambda_4 \ne 0$.
\end{example}

\begin{lem}\label{l:Psi-is-defined}
$\Psi_m$ is undefined if and only if $\alpha_m^\vee(\lambda) =
  \lambda_m - \lambda_{m+1} = 0$. Whenever it is defined,
  $\Psi_m \ne 0$.
\end{lem}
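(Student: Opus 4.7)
The plan is to prove both assertions simultaneously by induction on $m$, exploiting the recursion in Definition~\ref{d:Psi}.

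For the base case $m=1$, observe $\Psi_1 = F_1/[d_1]$ with $d_1 = \alpha_1^\vee(\lambda) \ge 0$, so well-definedness is equivalent to $\alpha_1^\vee(\lambda) \ne 0$ (using that $q$ is not a root of unity), and $\Psi_1$ is visibly nonzero when defined.

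For the inductive step, I would first handle well-definedness. The recursion requires both that $\Psi_{m-1}^+$ be defined and that $[d_m] \ne 0$. The latter is automatic: dominance of $\lambda$ gives $d_m = \lambda_1 - \lambda_{m+1} + m - 1 \ge m - 1 \ge 1$ for $m \ge 2$. For the former, the shift operator replaces $\alpha_j$ by $\alpha_{j+1}$ throughout, so by the inductive hypothesis $\Psi_{m-1}^+$ is defined precisely when $\alpha_m^\vee(\lambda) \ne 0$, yielding the first claim at level $m$.

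To establish nonvanishing, I would track the coefficient of the specific Coxeter monomial $F_1 F_2 \cdots F_m$ through the recursion. The key observation is that only the $F_1 \Psi_{m-1}^+$ term can contribute to this coefficient: any Coxeter monomial appearing in $\Psi_{m-1}^+$ is a word in $F_2, \ldots, F_m$ necessarily containing an $F_2$, so when $F_1$ is appended on the right it cannot be commuted past that $F_2$ (since $|1-2| = 1$) and hence cannot be brought to the front. Combining this with the identity $c_k = d_{k-1}^+ + 1$ and iterating, the coefficient collapses to
\[
[F_1 F_2 \cdots F_m]\,\Psi_m \;=\;
\frac{\prod_{j=1}^{m-1}\bigl[1 + d_j^{+(m-j)}\bigr]}{\prod_{j=1}^{m}\bigl[d_j^{+(m-j)}\bigr]}.
\]
Since $d_j^{+(m-j)} = \lambda_{m-j+1} - \lambda_{m+1} + j - 1$ is at least $j-1 \ge 1$ for $j \ge 2$ by dominance, equals $\alpha_m^\vee(\lambda) \ge 1$ for $j = 1$ under the standing hypothesis, and the shifts $1 + d_j^{+(m-j)}$ are all $\ge 2$, every bracket in the displayed formula is nonzero. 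Linear independence of Coxeter monomials in the weight space of $\alpha_1 + \cdots + \alpha_m$---which holds because no quantum Serre relation~\eqref{U6} applies at this weight, only commutations~\eqref{U7}---then forces $\Psi_m \ne 0$.

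I expect the main obstacle to be bookkeeping: correctly tracking the nested shifts as the recursion is unfolded, and confirming that $[d_1^{+(m-1)}] = [\alpha_m^\vee(\lambda)]$ is the sole bracket whose vanishing is even possible. Dominance of $\lambda$ handles every other factor automatically, and the PBW-style linear independence at weight $\alpha_1 + \cdots + \alpha_m$ is essentially free.
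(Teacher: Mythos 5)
Your proof is correct and follows the same inductive skeleton as the paper's. The well-definedness half is identical: induct, observe that $d_m\ge m-1\ge 1$ for $m\ge 2$ makes $[d_m]\ne 0$ automatic, and push the obstruction through the shift to $\alpha_m^\vee(\lambda)\ne 0$. Where you diverge is the nonvanishing half. The paper argues more abstractly: $\Psi_{m-1}^+F_1$ and $F_1\Psi_{m-1}^+$ are linearly independent because $\UU_q^-$ at this weight is a free algebra modulo only the commutation relations \eqref{U7} (the Serre relation \eqref{U6} never fires), and since $[c_m]\ne 0$ the two terms of the recursion cannot cancel -- done in two lines. You instead track the coefficient of the single Coxeter monomial $F_1F_2\cdots F_m$, correctly noting that the $\Psi_{m-1}^+F_1$ term cannot contribute (since $F_1$ cannot be commuted past the mandatory $F_2$), iterate to get the explicit product $\prod_{j=1}^{m-1}[1+d_j^{+(m-j)}]/\prod_{j=1}^{m}[d_j^{+(m-j)}]$, and check each bracket is nonzero. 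Both proofs rest on the same PBW-type fact about the weight space of $\alpha_1+\cdots+\alpha_m$; yours is heavier on bookkeeping but has the dividend of producing an explicit nonzero leading coefficient, which the paper's shorter argument does not give.
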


\begin{proof}
By induction on $m$. By Example \ref{ex:1}, $\Psi_1$ is defined if
and only if $\alpha_1^\vee(\lambda) \ne 0$. Let $m \ge 2$. By
induction, we may assume that $\Psi_{m-1}$ makes sense if and only if
$\alpha_{m-1}^\vee(\lambda) \ne 0$. Then by Definition~\ref{d:Psi},
\[
\Psi_{m} = \frac{1}{[d_{m}]}\big([c_{m}] F_1
\Psi^+_{m-1} - [c_{m}-1] \Psi^+_{m-1} F_1\big)
\]
so $\Psi_{m-1}^+$ is defined if and only if $\alpha_m^\vee(\lambda)
\ne 0$. Since neither $c_m$ nor $d_m$ can be zero (for $m \ge 1$) and
$\Psi_{m-1}^+ F_1$, $F_1 \Psi_{m-1}^+$ are linearly independent, it
follows that $\Psi_m$ is well defined and non-zero. Note that the
linear independence of $\Psi_{m-1}^+ F_1$, $F_1 \Psi_{m-1}^+$ follows
from the fact that $\UU^-(\fgl_m)$ is isomorphic to the free algebra
over $\Bbbk$ generated by $F_1, \dots, F_{m-1}$ subject only to
relations \eqref{U6} and \eqref{U7}, but relation \eqref{U6} is never
applicable.
\end{proof}

For the rest of this section, we work in the algebra
$\UU_q(\fgl_{n+1})$, and we fix a partition $\lambda = (\lambda_1,
\lambda_2, \dots)$ in $\X^+$.  We will consider a weight vector $b$
in some $\UU_q(\fgl_{n+1})$-module, which will be unspecified.

The following is the main result of this section. 

\begin{prop}\label{p:1}
If $b$ is a maximal vector of weight $\lambda$ in some module then 
\[
E_j \Psi_n b =
\begin{cases}
  \Psi^+_{n-1} b & \text{ if } j = 1 \\
  0 & \text{ if } 1 < j \le n.
\end{cases}
\]
\end{prop}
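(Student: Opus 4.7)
My plan is to prove the proposition by induction on $n$, handling the cases $j=1$ and $j \ge 2$ separately. The base case $n=1$ is immediate: since $\Psi_1 = F_1/[d_1]$, relation \eqref{U2} combined with $E_1 b = 0$ and $\tK_1 b = q^{d_1}b$ gives $E_1 F_1 b = [d_1] b$, hence $E_1 \Psi_1 b = b = \Psi_0^+ b$; the range $1 < j \le 1$ is vacuous.

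For the inductive step with $j=1$, note that $E_1$ commutes with each $F_i$ for $i \ne 1$ (by \eqref{U2} with $\delta_{1i}=0$), and therefore with $\Psi_{n-1}^+$, which lies in the subalgebra generated by $F_2, \ldots, F_n$. A direct application of \eqref{U2} yields $E_1 F_1 b = [d_1] b$ and $E_1 F_1 \Psi_{n-1}^+ b = [d_1+1] \Psi_{n-1}^+ b$; the extra $+1$ arises because $\Psi_{n-1}^+ b$ has weight $\lambda - \alpha_2 - \cdots - \alpha_n$, on which $\tK_1$ acts with eigenvalue $q^{d_1+1}$ (using $\alpha_1^\vee(\alpha_2) = -1$ and $\alpha_1^\vee(\alpha_i) = 0$ for $i \ge 3$). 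Substituting into the recursion yields the coefficient $([c_n][d_1+1] - [c_n-1][d_1])/[d_n]$ in front of $\Psi_{n-1}^+ b$. The standard quantum-integer identity $[a][b+1] - [a-1][b] = [a+b]$ with $a = c_n$, $b = d_1$, together with the relation $c_n + d_1 = d_n$ noted in Definition~\ref{d:Psi}, collapses this coefficient to $1$.

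The inductive step for $1 < j \le n$ is the main obstacle. Since $E_j$ commutes with $F_1$, one has
$E_j \Psi_n b = \tfrac{[c_n]}{[d_n]} F_1 E_j \Psi_{n-1}^+ b - \tfrac{[c_n-1]}{[d_n]} E_j \Psi_{n-1}^+ F_1 b$.
Both $b$ and $F_1 b$ are killed by $E_2, \ldots, E_n$, so I regard $b$ as a maximal vector of weight $(\lambda_2, \ldots, \lambda_{n+1})$ under the shift-isomorphic subalgebra $\UU' \cong \UU_q(\fgl_n)$ generated by $\{E_i, F_i, K_k^{\pm 1}: 2 \le i \le n,\ 2 \le k \le n+1\}$. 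A direct check that $\Psi_{n-1}^+$ corresponds to the shifted $\Psi_{n-1}$ under this isomorphism (the recursive constants $c_k, d_k$ match once the $\alpha_j$ shift by one) allows the inductive hypothesis to give $E_j \Psi_{n-1}^+ b = \Psi_{n-2}^{++} b$ for $j=2$ and $0$ for $3 \le j \le n$. For the second term, $F_1 b$ has weight $(\lambda_2+1, \lambda_3, \ldots, \lambda_{n+1})$ under $\UU'$, so the IH does not apply verbatim; rerunning the induction on $F_1 b$ while tracking the single correction $\alpha_2^\vee(-\alpha_1) = 1$ (and $\alpha_j^\vee(-\alpha_1) = 0$ for $j \ge 3$) through the base case produces $E_2 \Psi_{n-1}^+ F_1 b = \tfrac{[c_n]}{[c_n-1]} \Psi_{n-2}^{++} F_1 b$ for $j=2$ and $0$ for $j \ge 3$. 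For $3 \le j \le n$ both terms vanish. For $j=2$, using that $\Psi_{n-2}^{++}$ involves only $F_3, \ldots, F_n$ (which all commute with $F_1$), I get $\Psi_{n-2}^{++} F_1 b = F_1 \Psi_{n-2}^{++} b$, and the recursion coefficients $[c_n]$ and $[c_n-1] \cdot [c_n]/[c_n-1]$ cancel.

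The hard part is precisely this modified inductive identity for $F_1 b$: the correction factor $[c_n]/[c_n-1]$ must be computed via a careful second induction in which the quantum-integer identity $[a][b+1]-[a-1][b]=[a+b]$ is applied with $a$ and $b$ shifted by one, and then shown to be exactly what the numerical coefficients $[c_n]$ and $[c_n-1]$ in the definition of $\Psi_n$ are designed to cancel.
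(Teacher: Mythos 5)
Your argument is correct and takes essentially the same route as the paper: contraction ($E_iF_i\,b' = [\alpha_i^\vee(\lambda')]\,b'$), the $q$-integer identity $[y+1][z+1]-[y][z]=[y+z+1]$, and careful bookkeeping of how the weight of $F_1 b$ shifts $\alpha_2^\vee$ by one relative to $\lambda$, yielding the factor $[c_n]/[c_n-1]$ that the recursion coefficients are designed to cancel. The only clarification worth noting is that the ``careful second induction'' you flag as the hard part is in fact a single application of the paper's Lemma~\ref{l:E1-effect} (shifted once), which gives $E_1\Psi_n b'=\frac{[c_n+\alpha_1^\vee(\lambda')]}{[d_n]}\Psi_{n-1}^+ b'$ for a weight vector $b'$ of \emph{arbitrary} weight $\lambda'$ killed by $E_1$; applied to $b'=F_1b$ this produces your correction factor directly from one contraction and one $q$-integer identity, with no further induction required.
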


The proof of Proposition \ref{p:1} will be given at the end of the
section, after a series of lemmas.

\emph{We remind the reader that the notation $c_n$, $d_n$, and
$\Psi_n$ all depend on the chosen fixed $\lambda$.}  If $b'$ is a
weight vector of weight $\lambda' = (\lambda'_1, \lambda'_2, \dots)$
in some module then we write $\alpha^\vee_j(b') =
\alpha^\vee_j(\lambda') = \lambda'_j - \lambda'_{j+1}$, for all $j$.

\begin{lem}[$q$-integer identities]\label{l:q-int-identity}
  Let $0 \ne q \in \Bbbk$. For any $y,z \in \Z$, we have:
  \begin{enumerate}
  \item   $[y+1][z+1] - [y][z] = [y+z+1]$.
  \item   $[z] + q^{-z-1} = q^{-1} [z+1]$.
  \end{enumerate}
\end{lem}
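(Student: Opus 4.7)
The plan is that both identities are direct algebraic consequences of the defining formula $[m]=(q^m-q^{-m})/(q-q^{-1})$, and there is nothing nontrivial beyond bookkeeping; no induction or appeal to earlier structure is needed. I will verify each identity by clearing denominators, expanding, and canceling.

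For part (a), my approach is to multiply both sides by $(q-q^{-1})^2$ so that the claim becomes
\[
(q^{y+1}-q^{-y-1})(q^{z+1}-q^{-z-1})-(q^{y}-q^{-y})(q^{z}-q^{-z})=(q-q^{-1})(q^{y+z+1}-q^{-y-z-1}).
\]
Expanding the two products on the left, the ``cross'' monomials $q^{y-z}$ and $q^{-y+z}$ appear with opposite signs in the two terms and cancel. What remains is $q^{y+z+2}-q^{y+z}-q^{-y-z}+q^{-y-z-2}$, which factors as $(q-q^{-1})(q^{y+z+1}-q^{-y-z-1})$, matching the right-hand side. This completes (a).

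For part (b), I will multiply both sides by $q-q^{-1}$. The left side becomes $(q^{z}-q^{-z})+q^{-z-1}(q-q^{-1})=q^{z}-q^{-z-2}$, and the right side becomes $q^{-1}(q^{z+1}-q^{-z-1})=q^{z}-q^{-z-2}$, which agree.

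The only possible obstacle is that the identities are stated for arbitrary integers $y,z\in\Z$, including negatives, but since $[m]$ is defined for all integers by the same formula (with $[-m]=-[m]$) and the manipulations above use only properties of the exponential function $m\mapsto q^m$, they go through unchanged in the integer case. In short, both identities reduce to a one-line manipulation after unwinding the definition, and no case analysis on the signs of $y$, $z$ is required.
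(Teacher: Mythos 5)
Your verification computation is correct and the approach is essentially the same as the paper's: a direct algebraic check after unwinding the definition of $[m]$. The paper however phrases it as a formal identity in the Laurent polynomial ring $\A=\Z[x,x^{-1}]$, specialized afterward via $x\mapsto q$, and that wording is doing real work that your write-up skips. Since $[m]_q$ is \emph{defined} in the paper as the image $\operatorname{ev}_q([m]_x)$ of a genuine Laurent polynomial, the formula $[m]=(q^m-q^{-m})/(q-q^{-1})$ is only available in $\Bbbk$ once you know $q-q^{-1}$ is invertible. Your step ``multiply both sides by $(q-q^{-1})^2$'' is reversible only under that same hypothesis; when $q=\pm 1$ the factor is zero and the cleared equation no longer implies the original. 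Working in $\A$, where $x-x^{-1}$ is a non-zero-divisor, removes this concern and makes the lemma valid for every $q\ne 0$ as stated. Under the paper's standing assumption that $q$ is not a root of unity your argument is fine as it stands, but you should either note that you are using $q^2\ne 1$, or carry out the expansion over $\A$ and specialize at the end as the paper does.
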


\begin{proof}
One checks that the stated identities (with $x$ in place of $q$) hold
formally in the ring $\A=\Z[x,x^{-1}]$ of Laurent polynomials.  Then
specialize $x=q$ to get the identities in $\Bbbk$.
\end{proof}

\begin{lem}[contraction]\label{l:contraction}
  Let $b$ be a weight vector of weight $\lambda$. If $E_j b = 0$ then
  $E_jF_j b = [\alpha_j^\vee(\lambda)] b$.
\end{lem}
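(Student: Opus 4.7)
The plan is to derive this directly from the defining commutator relation \eqref{U2} combined with the action of $\tK_j$ on a weight vector. By \eqref{U2} with $i = j$, we have the operator identity
\[
E_j F_j = F_j E_j + \frac{\tK_j - \tK_j^{-1}}{q-q^{-1}}.
\]
I would apply both sides to $b$, use the hypothesis $E_j b = 0$ to kill the first term, and then use the fact that $\tK_j = K_{\alpha_j^\vee}$ acts on a weight vector of weight $\lambda$ via $q^{\alpha_j^\vee(\lambda)}$ (recorded in the preliminaries when weight spaces were defined). Then
\[
E_j F_j b = \frac{q^{\alpha_j^\vee(\lambda)} - q^{-\alpha_j^\vee(\lambda)}}{q-q^{-1}}\, b = [\alpha_j^\vee(\lambda)]\, b,
\]
which is the desired identity.

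There is no real obstacle here; the lemma is essentially a one-line unpacking of the Serre-type relation \eqref{U2} for a highest-weight-style vector in the $j$-th direction. The only care needed is to cite the correct convention: the $\tK$ in \eqref{U2} is $K_i K_{i+1}^{-1} = K_{\alpha_i^\vee}$, so the scalar produced is indeed $[\alpha_j^\vee(\lambda)]$ with $\alpha_j^\vee(\lambda) = \lambda_j - \lambda_{j+1}$, matching the statement and the formulas used immediately afterwards (e.g.\ in Example~\ref{ex:1} and Lemma~\ref{l:Psi-is-defined}).
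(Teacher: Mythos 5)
Your proof is correct and follows the same route as the paper's: apply \eqref{U2} with $i=j$ to $b$, kill the $F_jE_j$ term via the hypothesis, and evaluate the $\tK_j$-scalar on the weight vector (the paper cites equation~\eqref{e:tKj-action} for this last step). No discrepancies.
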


\begin{proof}
This is a consequence of relation~\eqref{U2}. By taking $i=j$ in that
relation we have
\[
E_j F_j b = F_j E_j b + \frac{\tK_j - \tK_j^{-1}}{q-q^{-1}} b.
\]
The result follows from the hypothesis and equation
\eqref{e:tKj-action}.
\end{proof}

\begin{lem}\label{l:E1-effect}
  Let $b'$ be a weight vector of weight $\lambda'$ in some module.
  If $E_1 b' = 0$ then
  \[
  E_1 \Psi_n b' = \frac{[c_n+\alpha_1^\vee(\lambda')]}{[d_n]} \Psi_{n-1}^+ b',
  \quad \text{ for all } n\ge 1.
  \]
\end{lem}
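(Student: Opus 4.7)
The plan is to induct on $n$. The base case $n=1$ is immediate: $\Psi_1 = F_1/[d_1]$, so the hypothesis $E_1 b' = 0$ and Lemma~\ref{l:contraction} give $E_1 \Psi_1 b' = [\alpha_1^\vee(\lambda')]b'/[d_1]$, which matches the claim since $c_1=0$ and $\Psi_0^+ = 1$.

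For the inductive step, expand via Definition~\ref{d:Psi}:
\[
[d_n]\, E_1 \Psi_n b' = [c_n]\, E_1 F_1 \Psi_{n-1}^+ b' - [c_n-1]\, E_1 \Psi_{n-1}^+ F_1 b'.
\]
Now $\Psi_{n-1}^+$ is a $\Bbbk$-linear combination of monomials in $F_2, \dots, F_n$; relation \eqref{U2} gives $[E_1, F_j]=0$ for $j \ne 1$, so $E_1$ commutes past $\Psi_{n-1}^+$. For the second term this yields $E_1 \Psi_{n-1}^+ F_1 b' = \Psi_{n-1}^+ E_1 F_1 b' = [\alpha_1^\vee(\lambda')]\, \Psi_{n-1}^+ b'$ by Lemma~\ref{l:contraction}. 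For the first term, commute $E_1$ past $F_1$ via \eqref{U2}; the $F_1 E_1$ piece vanishes because $E_1 \Psi_{n-1}^+ b' = \Psi_{n-1}^+ E_1 b' = 0$, leaving $E_1 F_1 \Psi_{n-1}^+ b' = \frac{\tK_1 - \tK_1^{-1}}{q-q^{-1}} \Psi_{n-1}^+ b'$, which is $[\alpha_1^\vee(\mu)]\, \Psi_{n-1}^+ b'$, where $\mu$ is the weight of $\Psi_{n-1}^+ b'$.

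Here is the one bit of bookkeeping that requires care: each Coxeter monomial appearing in $\Psi_{n-1}^+$ uses each of $F_2, \dots, F_n$ exactly once, so $\mu = \lambda' - (\alpha_2 + \cdots + \alpha_n)$. Using the Cartan matrix entries $\alpha_1^\vee(\alpha_2) = -1$ and $\alpha_1^\vee(\alpha_j) = 0$ for $j \ge 3$, we get $\alpha_1^\vee(\mu) = \alpha_1^\vee(\lambda') + 1$. Substituting back, the coefficient of $\Psi_{n-1}^+ b'$ becomes
\[
\frac{1}{[d_n]}\bigl([c_n][\alpha_1^\vee(\lambda')+1] - [c_n-1][\alpha_1^\vee(\lambda')]\bigr),
\]
and Lemma~\ref{l:q-int-identity}(a) with $y = c_n-1$, $z = \alpha_1^\vee(\lambda')$ collapses the bracket to $[c_n + \alpha_1^\vee(\lambda')]$, completing the induction.

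The only potential obstacle is the weight computation for $\mu$ and spotting the correct substitution in the $q$-integer identity; everything else is a direct unwinding of the recursion and a single application of \eqref{U2}.
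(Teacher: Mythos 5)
Your proof is correct and is essentially the paper's argument: expand $\Psi_n$ via the recursion, commute $E_1$ past $\Psi_{n-1}^+$, contract the two $E_1F_1$ occurrences using the weight $\mu = \lambda' - (\alpha_2+\cdots+\alpha_n)$, and collapse with Lemma~\ref{l:q-int-identity}(a). The induction framing is vestigial, however, since your ``inductive step'' never actually invokes the statement for $n-1$ --- it is the same self-contained direct computation that the paper carries out uniformly for all $n\ge 1$.
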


\begin{proof}
Applying the recursive definition of $\Psi_n$ and the contraction
lemma, we get
\begin{align*}
  E_1 \Psi_n b' &= \frac{1}{[d_n]} \big( [c_n] (E_1F_1) \Psi^+_{n-1}
  b' - [c_n-1] \Psi^+_{n-1} (E_1F_1) b' \big) \\  &=
  \frac{1}{[d_n]} \big( [c_n] [\alpha_1^\vee(\lambda')+1] - [c_n-1]
       [\alpha_1^\vee(\lambda')] \big) \Psi^+_{n-1} b' .
\end{align*}
In the above calculation, we used the fact that the weight of
$\Psi_{n-1}^+ b'$ is $\mu = \lambda' - (\alpha_2 + \cdots +
\alpha_{n})$ and $(\alpha_1^\vee, \mu) = \alpha_1^\vee(\lambda') +
1$. The result now follows from the first $q$-identity in Lemma
\ref{l:q-int-identity}.
\end{proof}

\begin{rmk}\label{r:1}
In particular, Lemma \ref{l:E1-effect} says that if $E_1 b' = 0$ and
$\alpha_1^\vee(\lambda') = \alpha_1^\vee(\lambda)$ then $E_1 \Psi_n b'
= \Psi^+_{n-1} b'$, for all $n \ge 1$.
\end{rmk}

\begin{lem}\label{l:2}
  Let $b'$ be a weight vector of weight $\lambda'$ in some module.
  If $E_2 b' = 0$
  and $\alpha_2^\vee(\lambda') = \alpha_2^\vee(\lambda)$ then
  \[
  E_2 \Psi_n b' = 0, \quad \text{ for all } n\ge 2.
  \]
\end{lem}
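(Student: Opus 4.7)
The plan is to exploit the recursive definition of $\Psi_n$ and transport $E_2$ through it, using the fact that $E_2$ commutes with $F_1$ (by relation \eqref{U2}, since $\delta_{2,1}=0$) together with a shifted version of Lemma \ref{l:E1-effect}. The shifted version is obtained by noting that $\Psi_n^+$ satisfies the same recursion as $\Psi_n$ with every $F_j$, $\alpha_j$, $c_n$, $d_n$ replaced by its shift; the proof of Lemma \ref{l:E1-effect} then applies verbatim to give
\[
E_2 \Psi_m^+ b'' = \frac{[c_m^+ + \alpha_2^\vee(\lambda'')]}{[d_m^+]} \, \Psi_{m-1}^{++} b''
\]
whenever $E_2 b'' = 0$ and $b''$ has weight $\lambda''$.

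Starting from the recursion for $\Psi_n$ and moving $E_2$ past $F_1$, I obtain
\[
E_2 \Psi_n b' = \frac{1}{[d_n]}\Big([c_n]\, F_1 \,(E_2 \Psi_{n-1}^+ b') - [c_n-1]\, E_2 \Psi_{n-1}^+ (F_1 b')\Big).
\]
I then apply the shifted Lemma twice: once at the vector $b'$ (whose weight $\lambda'$ satisfies $\alpha_2^\vee(\lambda') = \alpha_2^\vee(\lambda)$), and once at the vector $F_1 b'$, for which $E_2 F_1 b' = F_1 E_2 b' = 0$ and the weight shifts by $-\alpha_1$, giving $\alpha_2^\vee(\lambda' - \alpha_1) = \alpha_2^\vee(\lambda) + 1$. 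This writes $E_2 \Psi_n b'$ as a linear combination of $F_1 \Psi_{n-2}^{++} b'$ and $\Psi_{n-2}^{++} F_1 b'$. Since $\Psi_{n-2}^{++}$ is a polynomial in $F_3, \dots, F_n$, all of which commute with $F_1$ by \eqref{U7}, the two monomials coincide, and everything collapses onto a single term $F_1 \Psi_{n-2}^{++} b'$ with scalar coefficient
\[
\frac{1}{[d_n][d_{n-1}^+]}\Big([c_n]\,[c_{n-1}^+ + \alpha_2^\vee(\lambda)] \;-\; [c_n-1]\,[c_{n-1}^+ + \alpha_2^\vee(\lambda) + 1]\Big).
\]

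The final step is verifying that this coefficient vanishes. Using the formulas $c_{n-1}^+ = (\alpha_3 + \cdots + \alpha_n)^\vee(\lambda) + n - 2$ and $c_n = (\alpha_2 + \cdots + \alpha_n)^\vee(\lambda) + n - 1$ from Definition \ref{d:Psi}, we get the arithmetic identity $c_{n-1}^+ + \alpha_2^\vee(\lambda) = c_n - 1$. Substituting, the bracketed expression becomes $[c_n][c_n - 1] - [c_n - 1][c_n] = 0$, so $E_2 \Psi_n b' = 0$, as desired. The main obstacle is tracking the shift in the $\alpha_2^\vee$-weight induced by the action of $F_1$ on $b'$; once that is done correctly, the vanishing falls out of a one-line $q$-integer cancellation.
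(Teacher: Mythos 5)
Your proof is correct and follows essentially the same route as the paper's: transport $E_2$ through the recursion using $E_2 F_1 = F_1 E_2$, apply a shifted Lemma~\ref{l:E1-effect} at $b'$ and at $F_1 b'$ (tracking the weight shift $\alpha_2^\vee(\lambda' - \alpha_1) = \alpha_2^\vee(\lambda) + 1$), and collapse the two resulting terms via $F_1$ commuting with $\Psi_{n-2}^{++}$ so that the coefficient cancels. The only cosmetic difference is that you work with $c_{n-1}^+ + \alpha_2^\vee(\lambda) = c_n - 1$ where the paper rewrites the coefficients as $[c_n-1]$ and $[c_n]$ before combining; the underlying cancellation is identical.
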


\begin{proof}
For the proof, we set $a'_j = \alpha_j^\vee(\lambda')$ and $a_j =
\alpha_j^\vee(\lambda)$, for all $j$.  If we replace $n$ by $n-1$ in
Lemma \ref{l:E1-effect} we obtain the statement: if $E_1 b' = 0$ then
\[
E_1 \Psi_{n-1} b' = \frac{[c_{n-1} + a'_1]}{[d_{n-1}]} \Psi_{n-2}^+ b',
\quad \text{ for all } n\ge 2.
\]
Now shift the above. We get the statement:  if $E_2 b' = 0$ then
\[
E_2 \Psi^+_{n-1} b' = \frac{[c^+_{n-1} + a'_2]}{[d^+_{n-1}]}
\Psi_{n-2}^{++} b', \quad \text{ for all } n\ge 2.
\]
But $a'_2 = a_2$ by hypothesis, and $c^+_{n-1} + a_2 = c_n -1$, so the
above becomes: if $E_2 b' = 0$ then
\[
E_2 \Psi^+_{n-1} b' = \frac{[c_n - 1]}{[d^+_{n-1}]}
\Psi_{n-2}^{++} b', \quad \text{ for all } n\ge 2.
\]
Furthermore, if $E_2 b' = 0$ then also $E_2 F_1b' = 0$. Since the
weight of $F_1b'$ is $\lambda' - \alpha_1$, we have
$\alpha_2^\vee(F_1b') = \alpha_2^\vee(\lambda' - \alpha_1) = a'_2 + 1
= a_2 +1$. Now repeat the preceding argument with $b'$ replaced by
$F_1b'$. Then we get: if $E_2 b' = 0$ then
\[
E_2 \Psi^+_{n-1} F_1 b' = \frac{[c_n]}{[d^+_{n-1}]}
\Psi_{n-2}^{++} F_1 b', \quad \text{ for all } n\ge 2.
\]
Putting the results of the last two displayed equalities into the
recursive definition of $\Psi_n b'$ then gives
\begin{align*}
E_2 \Psi_{n} b' &= \frac{1}{[d_{n}]}\big([c_{n}] F_1
E_2 \Psi^+_{n-1} b' - [c_{n}-1] E_2 \Psi^+_{n-1} F_1 b' \big) \\
&= \frac{1}{[d_{n}][d^+_{n-1}]} \big([c_{n}] [c_n - 1]
F_1 \Psi^{++}_{n-2} b' - [c_{n}-1] [c_n] \Psi^{++}_{n-2} F_1 b' \big).
\end{align*}
Finally, since $\Psi^{++}_{n-2}$ is a linear combination of products
of $F_3, \dots, F_n$ taken in various orders, it is clear that $F_1$
commutes with $\Psi^{++}_{n-2}$, so the right hand side above
evaluates to zero, as required.
\end{proof}

\begin{lem}\label{l:3}
  Let $b'$ be a weight vector of weight $\lambda'$, and let $j \ge
  2$. If $E_j b' = 0$ and $\alpha_j^\vee(\lambda') =
  \alpha_j^\vee(\lambda)$ then
  \[
  E_j \Psi_n b' = 0, \quad \text{ for all } n\ge j.
  \]
\end{lem}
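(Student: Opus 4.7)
My plan is to prove the statement by induction on $j$, allowing $\lambda$ to vary as a parameter in the inductive hypothesis; the base case $j=2$ is exactly Lemma \ref{l:2}. Fix $\lambda$, take $j \ge 3$, and assume the lemma for the pair $(j-1, \tilde\lambda)$, where $\tilde\lambda = (\lambda_2, \lambda_3, \dots)$ denotes the left shift of $\lambda$.

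First I would apply $E_j$ to the recursion of Definition \ref{d:Psi}. Because $j \ge 2$, relation \eqref{U2} gives $[E_j, F_1] = 0$, so
\[
E_j \Psi_n b' = \frac{1}{[d_n]}\bigl([c_n]\, F_1 E_j \Psi_{n-1}^+ b' - [c_n-1]\, E_j \Psi_{n-1}^+ F_1 b'\bigr),
\]
and it suffices to verify $E_j \Psi_{n-1}^+ v = 0$ for $v \in \{b',\, F_1 b'\}$. To handle these, I would introduce the index-shift embedding $\sigma\colon \UU_q(\fgl_n) \hookrightarrow \UU_q(\fgl_{n+1})$ sending each generator to its shifted counterpart ($E_i \mapsto E_{i+1}$, $F_i \mapsto F_{i+1}$, $K_i \mapsto K_{i+1}$). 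The coefficient equalities $c_{n-1}^+ = c_{n-1,\tilde\lambda}$ and $d_{n-1}^+ = d_{n-1,\tilde\lambda}$, immediate from the explicit formulas in Definition \ref{d:Psi}, propagate through the recursion to yield $\Psi_{n-1,\lambda}^+ = \sigma(\Psi_{n-1,\tilde\lambda})$. Pulling the ambient module back through $\sigma$, the vector $v$ becomes a weight vector with new weight $(\lambda'_2,\dots,\lambda'_{n+1})$; its $\alpha_{j-1}^\vee$-value equals $\lambda'_j - \lambda'_{j+1} = \alpha_j^\vee(\lambda) = \alpha_{j-1}^\vee(\tilde\lambda)$, and the annihilation $E_j v = 0$ becomes $E_{j-1} v = 0$ in the pullback. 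The inductive hypothesis then gives $E_{j-1}\Psi_{n-1,\tilde\lambda} v = 0$ in the pulled-back module (using $n-1 \ge j-1$), which translates back to $E_j \Psi_{n-1,\lambda}^+ v = 0$ in the original.

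For $v = F_1 b'$ the same conclusion follows once the hypotheses are checked: $E_j F_1 b' = F_1 E_j b' = 0$, and $\alpha_j^\vee(F_1 b') = \alpha_j^\vee(\lambda') - \alpha_j^\vee(\alpha_1) = \alpha_j^\vee(\lambda')$, the last equality using $\alpha_j^\vee(\alpha_1) = 0$ whenever $j \ge 3$. This is precisely why the case $j = 2$ must be handled separately: when $j = 2$, one has $\alpha_2^\vee(\alpha_1) = -1$, which destroys the preservation of the weight condition under multiplication by $F_1$; this is the delicate point dealt with by the explicit cancellation appearing in Lemma \ref{l:2}. The main obstacle in writing this up cleanly is the bookkeeping surrounding the shift $\sigma$: one must verify the coefficient identities $c_{n-1}^+ = c_{n-1,\tilde\lambda}$, $d_{n-1}^+ = d_{n-1,\tilde\lambda}$, and match $\alpha_j^\vee$ on the unshifted side with $\alpha_{j-1}^\vee$ on the shifted side. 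Once that is in place, the argument reduces mechanically to the base case.
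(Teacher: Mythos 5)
Your proof is correct and follows essentially the same route as the paper's: both proceed by induction on $j$ with base case Lemma~\ref{l:2}, both apply the shifted inductive hypothesis to $b'$ and to $F_1 b'$ before plugging into the recursion of Definition~\ref{d:Psi}, and both hinge on the same pair of observations that $[E_j,F_1]=0$ for $j\ge 2$ and $\alpha_j^\vee(\alpha_1)=0$ precisely when $j\ge 3$. The only real difference is presentational --- the paper steps from $j$ to $j+1$ and shifts informally, while you step from $j-1$ to $j$ and formalize the shift as the embedding $\sigma$ paired with the truncation $\lambda\mapsto\tilde\lambda$, which makes explicit the identity $\Psi_{n-1,\lambda}^+=\sigma(\Psi_{n-1,\tilde\lambda})$ that the paper leaves implicit.
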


\begin{proof}
In the argument, we set $a'_j = \alpha_j^\vee(\lambda')$ and $a_j =
\alpha_j^\vee(\lambda)$, for all $j$.  The proof is by induction on
$j$. The base case $j = 2$ of the induction is Lemma \ref{l:2}. For
the inductive step we assume that the result holds for some fixed $j
\ge 2$. Replace $n$ by $n-1$ in the inductive hypothesis to get:
\[
E_j b' = 0 \text{ and } a'_j = a_j \implies E_j \Psi_{n-1} b' = 0,
\text{ for all } n-1 \ge j.
\]
Shifting the above and noting that $n-1 \ge j$ is equivalent to $n \ge
j+1$ produces the implication
\[
E_{j+1} b' = 0 \text{ and } a'_{j+1} = a_{j+1} \implies E_{j+1}
\Psi^+_{n-1} b' = 0, \text{ for all } n \ge j+1.
\]
Since $j$ is at least $2$, $j+1$ is at least $3$, so $E_{j+1} (F_1b')
= F_1 E_{j+1} b' = 0$ and furthermore, $a_{j+1}(F_1b') = a'_{j+1}$,
so we may replace $b'$ by $F_1 b'$ in the preceding displayed
implication, to get:
\[
E_{j+1} b' = 0 \text{ and } a'_{j+1} = a_{j+1} \implies E_{j+1}
\Psi^+_{n-1} F_1 b' = 0, \text{ for all } n \ge j+1.
\]
The result now follows by substituting the results of the preceding two
implications into the recursive definition
\[
E_{j+1} \Psi_{n} b' = \frac{1}{[d_{n}]}\big([c_{n}] F_1
E_{j+1} \Psi^+_{n-1} b' - [c_{n}-1] E_{j+1} \Psi^+_{n-1} F_1 b' \big).
\]
As both terms on the right hand side are zero, $E_{j+1} \Psi_{n} b' =
0$, and this holds for all $n \ge j+1$. This completes the induction.
\end{proof}

We are now ready to prove the formula for $E_j \Psi_n b$ in
Proposition~\ref{p:1}, where $b$ is a maximal vector of weight
$\lambda$.  We take $b' = b$.  The first case in the desired formula
then follows from Remark \ref{r:1}, and the other case follows from
Lemma~\ref{l:3}.

\section{The $\Phi$ operators}\label{s:Phi}\noindent
Let $V_q=V_q(1)$ be the vector representation of $\UU_q(\fgl_n)$,
defined at the end of Section~\ref{s:pre}. It is immediate from
\eqref{e:Delta} that a tensor product of maximal vectors is again a
maximal vector. Thus, if $b$ is a maximal vector of weight
$\lambda$ in some $\UU_q(\fgl_n)$-module $M$, then
\begin{equation*}
  \Phi_1(b) = v_1 \otimes b
\end{equation*}
is a maximal vector of weight $\lambda+\ep_1$ in $V_q \otimes M$. We
wish to find similar elements $\Phi_2(b), \dots, \Phi_n(b)$ that will
turn out to be maximal vectors of respective weights $\lambda+\ep_2,
\dots, \lambda+\ep_n$ in $V_q \otimes M$, under suitable conditions.
To that end, we observe the following.

\begin{lem}\label{l:wt-lem}
Let $b$ be a maximal vector of weight $\lambda$.  The weight of
$\Psi_j^{+(m-j-1)} b$ is $\lambda - \ep_{m-j} + \ep_{m}$ and the
weight of $v_{m-j} \otimes\Psi_j^{+(m-j-1)} b$ is $\lambda + \ep_{m}$.
\end{lem}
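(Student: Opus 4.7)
The plan is to reduce both claims to a bookkeeping statement about weights, using the facts that each $F_i$ lowers weight by $\alpha_i = \ep_i - \ep_{i+1}$ (from relation \eqref{U3}) and that $\Delta(K_\mu) = K_\mu \otimes K_\mu$ guarantees tensor products of weight vectors are weight vectors whose weights add.

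First I would establish, by induction on $j$, that $\Psi_j$ is a weight-homogeneous element of $\UU_q^-(\fgl_{j+1})$ of weight $-(\alpha_1 + \cdots + \alpha_j)$. The base case is immediate, since $\Psi_1 = F_1/[d_1]$ has weight $-\alpha_1$. For the inductive step, $\Psi_{n-1}^+$ has weight $-(\alpha_2 + \cdots + \alpha_n)$, so both $F_1 \Psi_{n-1}^+$ and $\Psi_{n-1}^+ F_1$ carry weight $-(\alpha_1 + \alpha_2 + \cdots + \alpha_n)$; hence so does their scalar combination $\Psi_n$. Equivalently, every Coxeter monomial appearing in $\Psi_n$ is some ordering of the product $F_1 F_2 \cdots F_n$, and all such orderings share this weight.

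Next, applying the shift $+(m-j-1)$ turns each $F_i$ into $F_{i+(m-j-1)}$, so $\Psi_j^{+(m-j-1)}$ has weight
\[
-\bigl(\alpha_{m-j} + \alpha_{m-j+1} + \cdots + \alpha_{m-1}\bigr) = -\ep_{m-j} + \ep_m
\]
by telescoping. Since $b$ has weight $\lambda$, the vector $\Psi_j^{+(m-j-1)} b$ has weight $\lambda - \ep_{m-j} + \ep_m$, which is the first assertion. The second then follows by tensoring with $v_{m-j}$, which has weight $\ep_{m-j}$: the summand $\ep_{m-j}$ cancels the $-\ep_{m-j}$, leaving weight $\lambda + \ep_m$.

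There is no real obstacle here; the lemma is essentially a weight-tracking exercise, and the only substantive observation is the initial one that $\Psi_j$ is weight-homogeneous, which is forced by the shape of the recursion (a scalar combination of two products, each of total $F$-degree $(1, 1, \ldots, 1)$ in the variables $F_1, \ldots, F_j$).
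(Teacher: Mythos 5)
Your proposal is correct and follows essentially the same route as the paper's proof: establish that $\Psi_j$ has weight $-(\alpha_1+\cdots+\alpha_j)$, apply the shift to re-index the simple roots, telescope, and then tensor with $v_{m-j}$. The only difference is that you spell out the induction showing $\Psi_j$ is weight-homogeneous, whereas the paper simply asserts this; that detail is welcome but not a departure.
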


\begin{proof}
The weight of $\Psi_j b$ is $\lambda - (\alpha_1 + \cdots + \alpha_j)
= \lambda - \ep_1 + \ep_{j+1}$. By shifting $m-1-j$ times, we obtain
the first statement. The second statement follows immediately
from the first.
\end{proof}

Now we define $\Phi_m(b)$ as a linear combination of the weight
vectors in Lemma~\ref{l:wt-lem}.

\begin{defn}\label{d:Phi}
Let $b$ be a maximal vector of weight $\lambda$ in some
$\UU_q(\fgl_n)$-module $M$.  For each $m=2, \dots, n$, we define a
weight vector $\Phi_m(b)$ in $V_q \otimes M$, of weight $\lambda+\ep_m$,
by:
\begin{equation*}
\Phi_m(b) = \sum_{j=0}^{m-1} (-q^{-1})^{j} v_{m-j} \otimes
\Psi_{j}^{+(m-j-1)} b ,
\end{equation*}
where the superscript $+(k)$ means to apply the $+$ operator $k$
times.  
\end{defn}

\begin{lem}
For $m\ge 2$, $\Phi_m(b)$ is defined if and only if
$\alpha_{m-1}^\vee(\lambda) \ne 0$.
\end{lem}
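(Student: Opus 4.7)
The plan is to reduce the question to Lemma~\ref{l:Psi-is-defined} by examining each summand of the sum in Definition~\ref{d:Phi} individually.

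First I would dispose of the trivial summand: for $j = 0$ we have $\Psi_0 = 1$, which is defined unconditionally, so that term places no restriction on $\lambda$. For $j \ge 1$, the summand contains $\Psi_j^{+(m-j-1)}$, obtained from $\Psi_j$ by iterating the shift operator $m - j - 1$ times. Since one application of $+$ replaces every $F_i$ by $F_{i+1}$ and every $\alpha_i$ by $\alpha_{i+1}$, iterating $k = m-j-1$ times shifts all indices by $k$. By Lemma~\ref{l:Psi-is-defined}, $\Psi_j$ fails to be defined exactly when $\alpha_j^\vee(\lambda) = 0$, so after the $+(m-j-1)$ shift the defining condition becomes $\alpha_{j+(m-j-1)}^\vee(\lambda) \ne 0$.

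The key observation is that the shifted index $j + (m-j-1) = m-1$ is independent of $j$. Therefore every non-trivial summand ($j = 1, \dots, m-1$) has the same definedness criterion, namely $\alpha_{m-1}^\vee(\lambda) \ne 0$. Since the hypothesis $m \ge 2$ guarantees that the index $j = 1$ (indeed $j = m-1$) actually occurs, there is at least one summand imposing this condition, so it is both necessary and sufficient.

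I expect the argument to be a short bookkeeping verification rather than a substantive proof; the only potentially confusing point is keeping track of which shift is being applied to which object, but once one notes the $j$-independence of $j + (m-j-1)$, the conclusion is immediate from Lemma~\ref{l:Psi-is-defined}.
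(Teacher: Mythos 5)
Your argument is correct and matches the paper's proof: both use Lemma~\ref{l:Psi-is-defined} to identify $\alpha_j^\vee(\lambda)\ne 0$ as the definedness criterion for $\Psi_j$, then observe that the $+(m-j-1)$ shift moves this to the $j$-independent index $m-1$, so all nontrivial summands impose the same condition $\alpha_{m-1}^\vee(\lambda)\ne 0$. Your explicit handling of the trivial $j=0$ term is a small extra step not spelled out in the paper, but the substance of the argument is the same.
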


\begin{proof}
This follows from Lemma \ref{l:Psi-is-defined}. Each $\Psi_j$ is
defined if and only if $\alpha_j^\vee(\lambda) \ne 0$. Hence, each
shifted operator $\Psi_j^{+(m-j-1)}$ is defined if and only if
$\alpha_{m-1}^\vee(\lambda) \ne 0$.
\end{proof}

Shifting also applies to the $\Phi_m(b)$, as follows.  The notation
$\Phi^+_m(b)$ is defined by replacing each $v_k$ by $v_{k+1}$ and each
$\Psi_j$ by $\Psi_j^+$ in Definition~\ref{d:Phi}.  This notation
enables the following recursive description
\begin{equation}\label{e:Phi-recursion}
  \Phi_m(b) = \Phi^+_{m-1}(b) + (-q^{-1})^{m-1}\, v_1 \otimes
  \Psi_{m-1} b, \quad \text{for any $m \ge 1$}.
\end{equation}
In this formula it is important to treat $b$ as a formal variable.

The following is the main result of this section.

\begin{thm}\label{t:main}
If $b$ is a maximal vector of weight $\lambda$ in some
$\UU_q(\fgl_n)$-module $M$, then:
\begin{enumerate}
\item[(a)] $\Phi_1(b) = v_1 \otimes b$ is a maximal vector of weight
  $\lambda+\ep_1$ in $V_q \otimes M$.
\item[(b)] For each $2 \le m \le n$, if $\alpha_{m-1}^\vee(\lambda)
  \ne 0$ then $\Phi_m(b)$ is a maximal vector of weight
  $\lambda+\varepsilon_m$ in $V_q \otimes M$.
\end{enumerate}
\end{thm}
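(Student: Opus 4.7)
Part (a) is immediate from the coproduct \eqref{e:Delta}: for every $i = 1,\dots,n-1$ we have $E_i v_1 = 0$, so $v_1$ is a maximal vector of weight $\ep_1$ in $V_q$, and a tensor product of maximal vectors is maximal of the sum of weights.

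For part (b), I rewrite Definition~\ref{d:Phi} (by the substitution $k = m-j$) in the form
\[
\Phi_m(b) = \sum_{k=1}^{m} (-q^{-1})^{m-k}\, v_k \otimes \Psi_{m-k}^{+(k-1)} b
\]
and compute $E_i \Phi_m(b)$ using $\Delta(E_i) = E_i \otimes 1 + \tK_i \otimes E_i$. The weight assertion is immediate from the $k = m$ summand together with Lemma~\ref{l:wt-lem}. The first-tensor-factor contribution $E_i v_k \otimes \Psi_{m-k}^{+(k-1)} b$ is nonzero only at $k = i+1$, producing $(-q^{-1})^{m-i-1}\, v_i \otimes \Psi_{m-i-1}^{+(i)} b$. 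For the second-tensor-factor contribution I must evaluate $E_i \Psi_{m-k}^{+(k-1)} b$. Since $\Psi_{m-k}^{+(k-1)}$ is a linear combination of monomials in $F_k,\dots,F_{m-1}$, when $i < k$ or $i \geq m$ the operator $E_i$ commutes past these by \eqref{U2} and \eqref{U7} and then annihilates $b$, giving zero. When $k \leq i \leq m-1$, apply the shifted analogs of Lemmas~\ref{l:E1-effect} and~\ref{l:3} obtained by shifting every index in Definition~\ref{d:Psi} and in those proofs by $s = k-1$; since those proofs use only the recursion together with \eqref{U2} and \eqref{U7}, all of which are invariant under simultaneous shift of the $F$'s and the $\alpha$'s, the shifted statements hold verbatim. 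Concretely, the shifted Lemma~\ref{l:3} gives $E_i \Psi_{m-k}^{+(k-1)} b = 0$ for $k+1 \leq i \leq m-1$, while the shifted Lemma~\ref{l:E1-effect} yields
\[
E_k \Psi_{m-k}^{+(k-1)} b = \frac{[c_{m-k}^{+(k-1)}+\alpha_k^\vee(\lambda)]}{[d_{m-k}^{+(k-1)}]}\,\Psi_{m-k-1}^{+(k)} b = \Psi_{m-k-1}^{+(k)} b,
\]
the last equality being the identity $c_{m-k}^{+(k-1)} + \alpha_k^\vee(\lambda) = d_{m-k}^{+(k-1)}$, immediate from Definition~\ref{d:Psi}(i).

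Thus for $1 \leq i \leq m-1$ only two summands of $E_i \Phi_m(b)$ survive: the first-factor term at $k = i+1$ and the second-factor term at $k = i$. Using $\tK_i v_i = q v_i$, the latter equals $(-q^{-1})^{m-i}\cdot q \cdot v_i \otimes \Psi_{m-i-1}^{+(i)} b = -(-q^{-1})^{m-i-1} v_i \otimes \Psi_{m-i-1}^{+(i)} b$, exactly cancelling the former. For $i \geq m$ both contributions vanish outright, so $E_i \Phi_m(b) = 0$ for every $i = 1,\dots,n-1$. The hypothesis $\alpha_{m-1}^\vee(\lambda) \ne 0$ is precisely what Lemma~\ref{l:Psi-is-defined} requires to guarantee that every $\Psi_j^{+(m-j-1)}$ appearing in the formula is defined. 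The main obstacle in executing this plan is organizing the shifted bookkeeping cleanly; once the shifts of Proposition~\ref{p:1} and its supporting lemmas are in place, maximality reduces to the single cancellation driven by $\tK_i v_i = q v_i$.
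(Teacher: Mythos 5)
Your proof is correct, and it takes a genuinely different organizational route from the paper's. The paper proves Theorem~\ref{t:main} by induction on the rank $n$: it restricts to $\UU_q(\fgl_n)$ to get maximality of $\Phi_1(b),\dots,\Phi_n(b)$ from the inductive hypothesis, then handles the single new case $\Phi_{n+1}(b)$ via the recursion $\Phi_{n+1}(b)=\Phi_n^+(b)+(-q^{-1})^n\,v_1\otimes\Psi_n b$ and Theorem~\ref{t:main-step}; the key cancellation is only carried out once, for $E_1$, with the remaining $E_j$ absorbed by the inductive hypothesis and the shift-commuting structure (Propositions~\ref{p:1} and~\ref{p:2}). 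You instead unroll this induction into a single direct computation of $E_i\Phi_m(b)$ from the explicit sum $\sum_{k=1}^{m}(-q^{-1})^{m-k}\,v_k\otimes\Psi_{m-k}^{+(k-1)}b$, showing for each $1\le i\le m-1$ that exactly two summands survive (the first-factor term at $k=i+1$ and the second-factor term at $k=i$) and cancel via $\tK_i v_i = q\,v_i$, and that everything vanishes for $i\ge m$. Both approaches rest on the same Section~\ref{s:Psi} lemmas; where the paper packages shift-invariance into Proposition~\ref{p:1} and the inductive step, you invoke shifted versions of Lemmas~\ref{l:E1-effect} and~\ref{l:3} explicitly — a legitimate move, since (as you note) the proofs of those lemmas commute with the simultaneous reindexing $F_j\mapsto F_{j+1}$, $\alpha_j\mapsto\alpha_{j+1}$. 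The identity $c_{m-k}^{+(k-1)}+\alpha_k^\vee(\lambda)=d_{m-k}^{+(k-1)}$, driving your coefficient $1$, is the shift of $d_n=c_n+\alpha_1^\vee(\lambda)$ recorded in Definition~\ref{d:Psi}(i), so the calculation is sound. The direct expansion makes the cancellation mechanism fully visible for every $i$ at once, at the cost of heavier shift bookkeeping; the paper's rank induction is terser but hides the uniform pattern behind the recursion. One very small imprecision: the weight of $\Phi_m(b)$ follows because \emph{every} summand has weight $\lambda+\ep_m$ by Lemma~\ref{l:wt-lem}, not only the $k=m$ summand — a cosmetic point, since the lemma gives this for all $j$.
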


The proof of Theorem~\ref{t:main} is by induction on $n$, and occupies
the rest of this section. The result is trivial if $n=1$.  For the
inductive step, we assume that it holds for some $n \ge 1$.

\begin{rmk}
In the language of addable nodes, the maximal vector $\Phi_j(b)$
exists only if the node at position $(j,\lambda_j+1)$ is addable, in
which case its weight is obtained by adding a node to the $j$th row of
$\lambda$.
\end{rmk}

\begin{prop}\label{p:2}
  Suppose that $b$ is a maximal vector of weight $\lambda$ in some
  $\UU_q(\fgl_{n+1})$-module $M$. Assume that $\Phi_n(b)$ is maximal
  with respect to $\UU_q(\fgl_n)$. Then
  \[
  E_j \Phi_n^+(b) = 
  \begin{cases}
    (-q^{-1})^{n-1} \, v_1 \otimes \Psi^+_{n-1} b & \text{ if } j=1 \\
    0 & \text{ if } 1 < j \le n.
  \end{cases}
  \]
\end{prop}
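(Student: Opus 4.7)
The plan is to compute $E_j \Phi_n^+(b)$ directly from the expansion
\[
\Phi_n^+(b) = \sum_{k=0}^{n-1} (-q^{-1})^k\, v_{n-k+1} \otimes \Psi_k^{+(n-k)} b
\]
using the coproduct $\Delta(E_j) = E_j \otimes 1 + \tK_j \otimes E_j$. This splits the computation into a ``vector-side'' sum involving $E_j v_{n-k+1}$ and a ``module-side'' sum involving $E_j \Psi_k^{+(n-k)} b$. Because $[E_i, F_\ell] = 0$ whenever $i \ne \ell$ by relation~\eqref{U2}, and $\Psi_k^{+(n-k)}$ is a polynomial in $F_{n-k+1}, \ldots, F_n$, the operator $E_j$ commutes with $\Psi_k^{+(n-k)}$ precisely when $j \notin \{n-k+1, \ldots, n\}$; in that case the module-side contribution is zero by maximality of $b$.

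For $j = 1$, the index $n-k+1 \ge 2$ for every $k$ in the sum, so $E_1$ commutes with each $\Psi_k^{+(n-k)}$ and the entire module-side sum vanishes. On the vector side, $E_1 v_{n-k+1} = v_1$ only when $k = n-1$, yielding the single surviving contribution $(-q^{-1})^{n-1}\, v_1 \otimes \Psi_{n-1}^+ b$.

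For $2 \le j \le n$, the vector side picks up the single term at $k = n-j$, contributing $(-q^{-1})^{n-j}\, v_j \otimes \Psi_{n-j}^{+(j)} b$. For the module side I would appeal to the index-shifted analogues of Lemmas~\ref{l:E1-effect} and~\ref{l:3}: shifting the proofs uniformly by $m$ indices is valid because the recursive definition of $\Psi_n$ and the identity $d_n - c_n = \alpha_1^\vee(\lambda)$ both respect the shift operator, and this yields, with $m = n-k$, both $E_{n-k+1}\, \Psi_k^{+(n-k)} b = \Psi_{k-1}^{+(n-k+1)} b$ and $E_j \Psi_k^{+(n-k)} b = 0$ whenever $j \ge n-k+2$. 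Thus only $k = n-j+1$ contributes on the module side, giving $(-q^{-1})^{n-j+1}\, \tK_j v_j \otimes \Psi_{n-j}^{+(j)} b$. Using $\tK_j v_j = q v_j$ from~\eqref{e:tKj-action} together with the identity $(-q^{-1})^{n-j+1} q = -(-q^{-1})^{n-j}$, this precisely cancels the vector-side term, so $E_j \Phi_n^+(b) = 0$.

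The main obstacle is supplying the shifted versions of Lemmas~\ref{l:E1-effect} and~\ref{l:3}: they are not stated verbatim in the paper, but their proofs re-index uniformly since the data in Definition~\ref{d:Psi} is symmetric under the index shift $j \mapsto j + m$. Incidentally, the hypothesis that $\Phi_n(b)$ is maximal with respect to $\UU_q(\fgl_n)$ is not directly invoked in this calculation, though it is recorded as part of the outer induction that proves Theorem~\ref{t:main}.
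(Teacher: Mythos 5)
Your proof is correct, and for the $j>1$ case it genuinely departs from the paper's argument. For $j=1$ you and the paper do essentially the same thing, except that the paper isolates the $v_2 \otimes \Psi_{n-1}^+ b$ term via the recursion $\Phi_n^+(b) = \Phi_{n-1}^{++}(b) + (-q^{-1})^{n-1}\, v_2 \otimes \Psi_{n-1}^+ b$ rather than expanding the full sum; the bookkeeping is identical. For $1 < j \le n$, however, the paper simply observes that the hypothesis ``$\Phi_n(b)$ is maximal with respect to $\UU_q(\fgl_n)$'' means $E_1,\dots,E_{n-1}$ annihilate $\Phi_n(b)$, and then shifts once to conclude $E_2,\dots,E_n$ annihilate $\Phi_n^+(b)$. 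That is the whole argument; no computation is needed. You instead redo the calculation term by term, pairing the vector-side contribution at $k = n-j$ with the module-side contribution at $k = n-j+1$ and showing they cancel via $\tK_j v_j = q v_j$ and $(-q^{-1})^{n-j+1}q = -(-q^{-1})^{n-j}$. This requires invoking shifted analogues of Lemmas~\ref{l:E1-effect} and~\ref{l:3} (equivalently a shifted Proposition~\ref{p:1}). Your justification that shifting is sound — because Definition~\ref{d:Psi} and the relation $d_n - c_n = \alpha_1^\vee(\lambda)$ are index-equivariant — is correct, though the paper avoids needing this by leaning on the hypothesis. The trade-off: your route is longer and requires trusting shift-invariance of the auxiliary lemmas, but it establishes the sharper fact that the maximality hypothesis on $\Phi_n(b)$ is actually redundant for this proposition — a genuine observation, and consistent with the fact that in the outer induction the hypothesis is always available anyway.
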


\begin{proof}
As $\Phi_n(b) = \Phi^+_{n-1}(b) + (-q^{-1})^{n-1}\, v_1 \otimes
\Psi_{n-1} b$, it follows that
\[
\Phi^+_{n}(b) = \Phi^{++}_{n-1}(b) + (-q^{-1})^{n-1}\, v_2 \otimes
\Psi^+_{n-1} b .
\]
The result in case $j=1$ now follows by applying $E_1$ to both sides,
since $E_1$ acts as zero on all the terms of $\Phi^{++}_{n-1}(b)$ and
also acts as zero on $\Psi^+_{n-1} b$, since $E_1$ commutes with
$\Psi^+_{n-1}$.

By hypothesis, $E_1, \dots, E_{n-1}$ all act as zero on
$\Phi_n(b)$. Hence, $E_2, \dots, E_{n}$ all act as zero on
$\Phi^+_n(b)$. This proves the $j>1$ cases.
\end{proof}

The following will be used for the inductive step in the proof of
Theorem~\ref{t:main}.

\begin{thm}\label{t:main-step}
  Suppose that $b$ is a maximal vector of weight $\lambda$ in some
  $\UU_q(\fgl_{n+1})$-module $M$. Assume that $\Phi_n(b)$ is maximal
  with respect to $\UU_q(\fgl_n)$ and that $\alpha_n^\vee(\lambda) \ne
  0$. Then $\Phi_{n+1}(b)$ is maximal with respect to
  $\UU_q(\fgl_{n+1})$.
\end{thm}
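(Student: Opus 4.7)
The plan is to show that $E_j \Phi_{n+1}(b) = 0$ for all $j = 1,\dots,n$ by splitting $\Phi_{n+1}(b)$ into two pieces via the recursion \eqref{e:Phi-recursion}, namely
\[
\Phi_{n+1}(b) = \Phi^+_{n}(b) + (-q^{-1})^{n} \, v_1 \otimes \Psi_{n}\, b,
\]
and then evaluating $E_j$ on each summand. The first summand is controlled by Proposition~\ref{p:2} (which uses the inductive hypothesis that $\Phi_n(b)$ is $\UU_q(\fgl_n)$-maximal), and the second summand is controlled by Proposition~\ref{p:1} combined with the coproduct formula \eqref{e:Delta}.

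For each $j$ with $1 \le j \le n$, I would compute $E_j(v_1 \otimes \Psi_n b)$ using $\Delta(E_j) = E_j \otimes 1 + \tK_j \otimes E_j$, together with the identities $E_j v_1 = 0$ (since $E_j$ only raises $v_{j+1}$ to $v_j$) and $\tK_j v_1 = q^{\delta_{1,j}-\delta_{1,j+1}} v_1$ from \eqref{e:tKj-action}. So $\tK_1 v_1 = q v_1$ while $\tK_j v_1 = v_1$ for $j>1$. Thus
\[
E_j(v_1 \otimes \Psi_n b) =
\begin{cases} q\, v_1 \otimes E_1 \Psi_n b & \text{if } j = 1,\\ v_1 \otimes E_j \Psi_n b & \text{if } 1 < j \le n. \end{cases}
\]
Note that Proposition~\ref{p:1} applies here because $\Psi_n$ is defined (this is where the hypothesis $\alpha_n^\vee(\lambda) \ne 0$ is used, via Lemma~\ref{l:Psi-is-defined}), and its conclusions for $1 < j \le n$ (where $E_j \Psi_n b = 0$) together with the vanishing $E_j \Phi_n^+(b) = 0$ from Proposition~\ref{p:2} immediately give $E_j \Phi_{n+1}(b) = 0$ for those $j$.

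The only remaining case is $j=1$, and this is where the cancellation has to be engineered. On one hand Proposition~\ref{p:2} gives $E_1 \Phi_n^+(b) = (-q^{-1})^{n-1}\, v_1 \otimes \Psi_{n-1}^+ b$; on the other hand Proposition~\ref{p:1} gives $E_1 \Psi_n b = \Psi_{n-1}^+ b$, so that
\[
E_1\bigl((-q^{-1})^n v_1 \otimes \Psi_n b\bigr) = (-q^{-1})^n \cdot q \cdot v_1 \otimes \Psi_{n-1}^+ b = -(-q^{-1})^{n-1}\, v_1 \otimes \Psi_{n-1}^+ b.
\]
The two contributions cancel, completing the induction. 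The only real subtlety is keeping track of the power of $-q^{-1}$, and my main concern is precisely that sign/weight bookkeeping; this is the reason the coefficient $(-q^{-1})^j$ is chosen in Definition~\ref{d:Phi}, since its purpose is exactly to produce this cancellation in tandem with the eigenvalue $q$ of $\tK_1$ on $v_1$. Since $\Phi_{n+1}(b)$ has the correct weight $\lambda+\ep_{n+1}$ by Lemma~\ref{l:wt-lem}, the proof concludes.
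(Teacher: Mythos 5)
Your proof is correct and follows essentially the same route as the paper's: split $\Phi_{n+1}(b)$ via the recursion \eqref{e:Phi-recursion}, use the coproduct formula and the eigenvalue $\tK_1 v_1 = qv_1$ to produce the cancellation with $E_1\Phi_n^+(b)$ for $j=1$ (Propositions~\ref{p:1} and \ref{p:2}, first cases), and use the second cases of the same propositions for $1<j\le n$. The only difference is that you spell out the coproduct computation for each $j$ a bit more explicitly than the paper does; the argument and the sign bookkeeping are the same.
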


\begin{proof}
From the recursive formula \eqref{e:Phi-recursion}, we have 
\[
\Phi_{n+1}(b) = \Phi^+_n(b) + (-q^{-1})^n v_1 \otimes \Psi_n(b).
\]
Since $E_1$ acts as zero on $v_1$, the definition of the coproduct
$\Delta$ and the first case in Propositions~\ref{p:1} and
\ref{p:2} gives
\begin{align*}
  E_1 \Phi_{n+1}(b) &= E_1 \Phi_n^+(b) + (-q^{-1})^n \tK_1 v_1 \otimes
  E_1 \Psi_n(b)\\ &= (-q^{-1})^{n-1} v_1 \otimes \Psi_{n-1}^+ b -
  (-q^{-1})^{n-1} v_1 \otimes \Psi_{n-1}^+ b = 0.
\end{align*}
The second case in the same Propositions ensures that $E_2, \dots,
E_n$ all act as zero on both terms of the right hand side of
$\Phi_{n+1}(b)$, finishing the proof.
\end{proof}

We now prove Theorem~\ref{t:main}.  Let $b$ be a maximal vector of
weight $\lambda$ in some $\UU_q(\fgl_{n+1})$-module $M$. Regard $M$ as
a $\UU_q(\fgl_n)$-module by restriction (using the obvious embedding
of $\UU_q(\fgl_n)$ in $\UU_q(\fgl_{n+1})$ given by $E_i \mapsto E_i$
and $F_i \mapsto F_i$ for $i=1,\dots, n-1$, and $K_j \mapsto K_j$ for
$j=1,\dots,n$). By the inductive hypothesis, we know that:
\begin{enumerate}
\item[1)] $\Phi_1(b)$ is maximal with respect to $\UU_q(\fgl_{n})$, and

\item[2)] for each $2\le m \le n$, $\Phi_{m}(b)$ is maximal with
  respect to $\UU_q(\fgl_{n})$, provided that
  $\alpha_{m-1}^\vee(\lambda) \ne 0$.
\end{enumerate}
It is clear that $\Phi_1(b)$ is maximal with respect to
$\UU_q(\fgl_{n+1})$. For each $m$ with $2 \le m \le n$, $\Phi_{m}(b)$
is maximal with respect to $\UU_q(\fgl_{n+1})$, under the stated
proviso in 2), since the shifted $\Psi_j$ appearing in the formulas
depend only on $F_1, \dots, F_{n-1}$ and thus $E_n$ acts as zero on
all the terms. Finally, the fact that $\Phi_{n+1}(b)$ is maximal with
respect to $\UU_q(\fgl_{n+1})$, provided that
$\alpha_{n}^\vee(\lambda) \ne 0$, is the content of
Theorem~\ref{t:main-step}. This completes the inductive step, and thus
the proof of Theorem~\ref{t:main}.

\begin{rmk}
An analysis of the above proof reveals the following. Assume that $b$
is maximal and that $\Phi_n(b) = \Phi^+_{n-1}(b) + (-q^{-1})^{n-1} v_1
\otimes \Psi_{n-1} b$ is maximal with $\Psi_{n-1} b$ defined as above
in terms of $c_{n-1}$ and $d_{n-1}$. Suppose that we define
\[
\Psi_n b = x F_1 \Psi^+_{n-1} - y  \Psi^+_{n-1} F_1
\]
with undetermined coefficients $x$, $y$ and set
\[
\Phi_{n+1}(b) = \Phi^+_n(b) + (-q^{-1})^n v_1 \otimes \Psi_n b.
\]
The two necessary conditions $E_1 \Phi_{n+1}(b) = 0$ and $E_2
\Phi_{n+1}(b) = 0$ are equivalent to a linear system of two equations
in the two unknowns $x$, $y$. Solving that linear system determines
that $x = [c_n]/[d_n]$, $y = [c_n -1]/[d_n]$ uniquely. The proof of
Lemma \ref{l:3} then shows that $E_j$ acts as zero on $\Phi_{n+1}(b)$,
for any $3 \le j \le n$. In this sense, the sequence of scalars used
in the definition of $\Psi_n$ boils down to solving a $2 \times 2$
system.
\end{rmk}

As an application, the results of this section give us a $q$-analogue
of Young's rule.

\begin{cor}\label{c:Young's-rule}
  Let $\lambda$ be a partition into at most $n$ parts. Then the tensor
  product $V_q \otimes V_q(\lambda)$ has the multiplicity-free
  $\UU_q(\fgl_n)$-module decomposition
  \[
  V_q \otimes V_q(\lambda) \cong \textstyle \bigoplus_{\mu \setminus
    \lambda \, = \, \square} V_q(\mu)
  \]
  where the sum on the right hand side is over the set of partitions $\mu$
  which differ from $\lambda$ by one box (occupying an addable node).
\end{cor}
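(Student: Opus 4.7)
The plan is to apply Theorem~\ref{t:main} with $M = V_q(\lambda)$ and $b$ a highest weight vector of $V_q(\lambda)$, and then combine the explicit maximal vectors produced by the theorem with semisimplicity and a character count.

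First I would invoke Theorem~\ref{t:Uq-Mod}: since $V_q \otimes V_q(\lambda)$ lies in the semisimple category $\Mod{\UU_q}$, it decomposes as a direct sum of simple modules $V_q(\nu)$, and the multiplicity of $V_q(\nu)$ equals the dimension of the $\nu$-weight space of maximal vectors. Next, for each $m \in \{1,\dots,n\}$ such that the $m$th row of $\lambda$ has an addable node -- equivalently, $m=1$ or $\alpha_{m-1}^\vee(\lambda) \ne 0$ -- Theorem~\ref{t:main} hands us a maximal vector $\Phi_m(b) \in V_q \otimes V_q(\lambda)$ of weight $\lambda + \ep_m$. The leading summand of $\Phi_m(b)$ in Definition~\ref{d:Phi} is $v_m \otimes b$, which lies in the distinct component $v_m \otimes V_q(\lambda)$ of the standard decomposition $V_q \otimes V_q(\lambda) = \bigoplus_{k=1}^n v_k \otimes V_q(\lambda)$; hence $\Phi_m(b) \ne 0$. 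This forces the multiplicity of $V_q(\lambda + \ep_m)$ in $V_q \otimes V_q(\lambda)$ to be at least one, producing an injection $\bigoplus_\mu V_q(\mu) \hookrightarrow V_q \otimes V_q(\lambda)$ where $\mu$ ranges over the addable extensions of $\lambda$.

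To upgrade this injection to an isomorphism I would run a dimension check. By Theorem~\ref{t:Uq-Mod}(c), each $V_q(\nu)$ obeys Weyl's character formula, so $\dim V_q(\nu) = \dim V(\nu)$ for the classical Weyl module. Therefore $\dim (V_q \otimes V_q(\lambda)) = n \cdot \dim V(\lambda)$, which by the classical Pieri/Young rule for $\fgl_n$ equals $\sum_\mu \dim V(\mu) = \sum_\mu \dim V_q(\mu)$, where $\mu$ runs over partitions obtained from $\lambda$ by adding one addable box. The two dimensions match, so the inclusion is an equality and no other simple modules occur -- giving the claimed multiplicity-free decomposition.

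There is no real obstacle here, since all the conceptual work has already been done in Theorem~\ref{t:main}; the only ``delicate'' point is the verification that $\Phi_m(b) \ne 0$, which is immediate from the $v_m \otimes b$ term, and the appeal to classical Pieri via matching characters, which is automatic from Theorem~\ref{t:Uq-Mod}(c).
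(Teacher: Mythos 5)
Your proposal is correct and follows essentially the same route as the paper: construct a maximal vector $\Phi_m(b)$ of weight $\lambda+\ep_m$ for each addable row $m$ via Theorem~\ref{t:main}, then conclude by a dimension comparison using Weyl's character formula and the classical Pieri rule. Your explicit observation that $\Phi_m(b) \ne 0$ because its leading term $v_m \otimes b$ sits in the distinct summand $v_m \otimes V_q(\lambda)$ is a nice touch that the paper leaves implicit.
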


\begin{proof}
Let $b$ be a highest weight vector generating $V_q(\lambda)$ (so $b$
is maximal). Suppose that $\alpha_j^\vee(\lambda) \ne
0$. Equivalently, $(j,\lambda_j+1)$ is an addable node in the shape
$\lambda$. Let $\mu$ be the shape obtained by adding a node in that
position, so that $\mu \setminus \lambda \, = \, \square$. Then
$\Phi_j(b)$ is a maximal vector in $V_q \otimes V_q(\lambda)$ of
weight $\mu$. We obtain such a maximal vector in $V_q \otimes
V_q(\lambda)$ for each addable node, so $V_q \otimes V_q(\lambda)$
contains the (direct) sum of the submodules generated by the maximal
vectors of the form $\Phi_j(b)$ for which $\alpha_j^\vee(\lambda) \ne
0$. Now we can finish by a dimension comparison, using the fact (since
$q$ is not a root of unity) that the characters of the $q$-Weyl
modules are given by Weyl's character formula, and hence their
dimensions are the same as in the classical case.
\end{proof}

Using the operators $\Phi_1, \dots, \Phi_n$, we will now construct a
non-zero maximal vector $\cc_\pi$ corresponding to each walk (see
Section~\ref{s:pre}) in the Bratteli diagram.

\begin{defn}\label{d:Upsilon}
Let $\pi \mapsto \cc_\pi$ be the map from walks on the Bratteli
diagram to maximal vectors in the tensor algebra of $V_q$, defined as
follows.  If the unique node in $\pi^{(j)} \setminus \pi^{(j-1)}$ is
in the $k$th row, we set $\Upsilon_j = \Phi_k$. Then $\cc_\pi$ is
given by
\[
\cc_\pi = \Upsilon_r \Upsilon_{r-1} \cdots \Upsilon_1(1)
\]
if the walk $\pi$ has length $r$.  It follows from
Theorem~\ref{t:main} that $\cc_\pi$ is maximal. 
\end{defn}

In Section \ref{s:orthog} we show that the maximal vectors indexed by
walks of length $r$ are pairwise orthogonal with respect to a natural
bilinear form and span the space of maximal vectors in $V_q^{\otimes
  r}$.

\begin{example}
We denote $v_{i_1} \otimes v_{i_2} \otimes
\cdots$ by $v_{i_1i_2 \cdots}$ as a convenient shorthand.
The unique length $1$ walk in the Bratteli diagram produces the
maximal vector $\Phi_1(1) = v_1$. The two length $2$ walks produce the
maximal vectors $\Phi_1(\Phi_1(1)) = v_{11}$ and $\Phi_2(\Phi_1(1)) =
v_{21}-q^{-1}v_{12}$. There are four length $3$ walks, producing the
maximal vector
\[
\Phi_1(\Phi_1(\Phi_1(1))) = v_{111}
\]
of weight $(3)$, the two maximal vectors 
\begin{align*}
\Phi_1(\Phi_2(\Phi_1(1))) &= v_{121} - q^{-1} v_{112} \\
  \Phi_2(\Phi_1(\Phi_1(1))) &= v_{211}
  - \frac{q^{-1}}{[2]}(q^{-1} v_{121} + v_{112})
\end{align*}
of weight $(2,1)$, and finally the maximal vector
\[
\Phi_3(\Phi_2(\Phi_1(1))) = v_{321} - q^{-1} v_{312} - q^{-1} v_{231}
    + q^{-2} v_{213} + q^{-2} v_{132} - q^{-3} v_{123}
\]
of weight $(1,1,1)$. 
\end{example}

\section{Orthogonality}\label{s:orthog}\noindent
We fix $n$ throughout this section. Let $V_q = V_q(1)$, the vector
representation of $\UU_q = \UU_q(\fgl_n)$ with its standard basis
$\{v_1, \dots, v_n\}$.  Let $\bil{-}{-}$ be the nondegenerate
symmetric bilinear form on $V_q$ given by $\bil{v_i}{v_j} =
\delta_{ij}$.  The basis $\{v_1, \dots, v_n\}$ is orthonormal with
respect to this form. Extend $\bil{-}{-}$ to a nondegenerate symmetric
bilinear form on $V_q^{\otimes r}$, denoted by the same symbols, by
defining
\begin{equation}
  \bil{v_{i_1} \otimes \cdots \otimes v_{i_r}}{v_{j_1} \otimes \cdots
    \otimes v_{j_r}} = \textstyle\prod_\ell \bil{v_{i_\ell}}{v_{j_\ell}}.
\end{equation}
It is clear that weight vectors of different weight are orthogonal
with respect to the form; that is, for weight vectors $b$ and $b'$, we
have $\bil{b}{b'} = 0$ unless $b$ and $b'$ have the same weight.

The following is the main result of this section. Note that part (b)
implies that $\cc_\pi \ne 0$, for any walk $\pi$. (See \ref{d:Upsilon}
for the definition of $\cc_\pi$.)

\begin{thm}\label{t:orthogonality}
Let $\pi$ and $\pi'$ be walks on the Bratteli diagram. Then:
\begin{enumerate}
\item $\bil{\cc_\pi}{\cc_{\pi'}} = 0$ whenever $\pi \ne
  \pi'$.
\item $\bil{\cc_\pi}{\cc_\pi} \ne 0$.
\end{enumerate}
\end{thm}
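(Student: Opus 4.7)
The plan is to prove both parts simultaneously by induction on the common length $r$ of the walks. The base case $r=0$ is immediate: the only walk is the empty walk, $\cc_\emptyset = 1$, and $\bil{1}{1}=1 \neq 0$. Assume both statements for walks of length less than $r$ and let $\pi, \pi'$ be walks of length $r$. If they terminate at distinct partitions $\lambda \neq \lambda'$, then $\cc_\pi$ and $\cc_{\pi'}$ are maximal vectors of weights $\lambda$ and $\lambda'$ respectively (by Theorem~\ref{t:main}), and weight vectors of different weights pair trivially under $\bil{-}{-}$. So one reduces to the case where both walks end at the same $\lambda$. Writing $\cc_\pi = \Phi_k(\cc_{\pi^-})$ and $\cc_{\pi'} = \Phi_{k'}(\cc_{(\pi')^-})$, where $\pi^-, (\pi')^-$ are the length-$(r{-}1)$ truncations and $k, k'$ are the rows receiving the final box, the weights of $\cc_{\pi^-}$ and $\cc_{(\pi')^-}$ are $\lambda - \ep_k$ and $\lambda - \ep_{k'}$ respectively.

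The central ingredient to establish is the following claim: for any maximal vectors $b, b'$ of respective weights $\mu, \mu'$ with $\mu+\ep_k = \mu'+\ep_{k'} = \lambda$,
\[
\bil{\Phi_k(b)}{\Phi_{k'}(b')} \;=\; \delta_{k,k'}\, N_k(\mu)\, \bil{b}{b'},
\]
where $N_k(\mu) \in \Bbbk$ is nonzero and depends only on $k$ and $\mu$. Granting this, the inductive step closes at once. When $k \neq k'$ the Kronecker delta forces vanishing (which is also consistent with $\bil{b}{b'}=0$, since then $\mu \neq \mu'$); when $k = k'$ one has $\bil{\cc_\pi}{\cc_{\pi'}} = N_k(\mu)\bil{\cc_{\pi^-}}{\cc_{(\pi')^-}}$, which vanishes by induction if $\pi^- \neq (\pi')^-$, while if $\pi = \pi'$ it equals $N_k(\mu)\bil{\cc_{\pi^-}}{\cc_{\pi^-}} \neq 0$, yielding (b).

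To establish the claim, the plan is to expand both factors by Definition~\ref{d:Phi}; orthonormality of the $v_i$ in the first tensor slot collapses the pairing to
\[
\sum_{\ell=1}^{\min(k,k')} (-q^{-1})^{k+k'-2\ell}\, \bil{\Psi_{k-\ell}^{+(\ell-1)} b}{\Psi_{k'-\ell}^{+(\ell-1)} b'} .
\]
Each summand will then be analysed using the recursion of Definition~\ref{d:Psi} together with Proposition~\ref{p:1}: since $E_j \Psi_n b = 0$ for $1 < j \le n$ and $E_1 \Psi_n b = \Psi_{n-1}^+ b$, moving a $\Psi$ across the form (via an adjoint-style identity) should leave only one surviving contribution and kill the rest via the maximality of $b'$. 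Alternatively, a secondary induction on $\min(k,k')$ using the recursion \eqref{e:Phi-recursion} will split off the $v_1$-first summand of each $\Phi$, reducing the problem to a shifted version of itself plus a residual pairing $\bil{\Psi_{k-1} b}{\Psi_{k'-1} b'}$; the $q$-integer coefficients should recombine into the scalar $N_k(\mu)$ via Lemma~\ref{l:q-int-identity}.

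The main obstacle is precisely this adjoint manipulation. The tensor-space action of each $F_i$ is obtained from iterated coproducts, so its transpose with respect to the standard form is not simply the action of $E_i$ but carries $\tK_i^{\pm 1}$ factors in specific positions; tracking these diagonal corrections through a linear combination of Coxeter monomials constitutes the bookkeeping burden. The cleanest route appears to be to first establish inductively a closed adjunction formula expressing $\bil{\Psi_n^{+(s)} b}{c}$ in terms of $\bil{b}{X c}$ for an explicit $E$-expression $X$ when $b$ is maximal, and then to specialize to $c = \Psi_{n'}^{+(s)} b'$ and use maximality of $b'$. The nonvanishing of the resulting scalar $N_k(\mu)$ should then be visible from the explicit $q$-integer formulas together with the standing hypothesis that $q$ is not a root of unity, paralleling Lemma~\ref{l:Psi-is-defined}.
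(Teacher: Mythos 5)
Your overall strategy—induction on the walk length, reducing each step via a formula $\bigbil{\Phi_k(b)}{\Phi_{k'}(b')} = \delta_{k,k'}\,N_k(\mu)\,\bigbil{b}{b'}$—is exactly the right skeleton, and it is essentially the same as the paper's. The $k=k'$ case of your claim is precisely Theorem~\ref{t:Phi-form}(b), with $N_k(\mu) = \rho_k$ given there as an explicit product of ratios of Gaussian integers. Your explicit inclusion of the $k\neq k'$ case (the Kronecker delta) is in fact a useful clarification: when the two walks diverge at the last step, $\Phi_k(b)$ and $\Phi_{k'}(b')$ both have weight $\lambda$ even though $b$ and $b'$ do not, so the vanishing of their pairing is not a weight observation and does need to be established; the paper's own proof is terse about this point.

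However, the proposal does not actually prove the claim—it only names what would have to be done. You acknowledge that ``the main obstacle is precisely this adjoint manipulation,'' and that is exactly where the real content lies. In the paper, the reduction formula is proved via the adjointness lemma $\bil{E_i b}{b'} = q^{\alpha_i^\vee(\lambda)+1}\bil{b}{F_i b'}$ (Lemma~\ref{l:adjointness}), but this alone is nowhere near enough. One also needs: (i) the technical Lemma~\ref{l:alpha1}, which controls how the coefficients $c_{j,\lambda}$, $d_{j,\lambda}$ and the operators $\Psi_{j,\lambda}^{+}$, $\Psi_{j,\lambda}^{++}$ change when $\lambda$ is replaced by $\lambda - \alpha_1$ (this is what lets one treat $F_1 b$, $F_1 b'$ as maximal vectors for the parabolic subalgebra obtained by deleting the first node); and (ii) the three-part Proposition~\ref{p:reduction}, proved by a genuinely interleaved induction in which statement (c) is established directly, (c) and (b) together give (a), and (a) is fed back in to prove the next instance of (b). This last implication---showing $A_j \Rightarrow B_{j+1}$---is the most delicate step and requires passing to the parabolic weight $\lambda-\alpha_1$ in the middle of the calculation. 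Neither your ``closed adjunction formula for $\bil{\Psi_n^{+(s)}b}{c}$'' route nor your secondary induction on $\min(k,k')$ anticipates this parabolic shift, and without it the $\tK_i$-corrections you correctly flag do not recombine cleanly. So the proposal correctly frames the reduction but leaves the decisive computation unestablished; it would need the full apparatus of Lemma~\ref{l:alpha1}, Proposition~\ref{p:reduction}, and Theorem~\ref{t:Phi-form} to become a complete proof.
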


Before taking up the proof, we note the following immediate consequence. 

\begin{cor}\label{c:orthogonality}
The set $\{\cc_\pi \mid \pi \in \Walk(r)\}$ is an orthogonal basis for
the space of maximal vectors in $V_q^{\otimes r}$.
\end{cor}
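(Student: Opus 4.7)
The plan is to read off the corollary directly from Theorem~\ref{t:orthogonality} together with a dimension count. First, by Definition~\ref{d:Upsilon} and Theorem~\ref{t:main}, every $\cc_\pi$ is a maximal vector in $V_q^{\otimes r}$, whose weight is the partition $\lambda$ at which $\pi$ terminates. Theorem~\ref{t:orthogonality}(a) gives pairwise orthogonality, and Theorem~\ref{t:orthogonality}(b) ensures each $\cc_\pi$ is non-isotropic; since vectors of different weight are automatically orthogonal, an orthogonal family of non-isotropic vectors is linearly independent, so $\{\cc_\pi \mid \pi \in \Walk(r)\}$ is a linearly independent subset of the maximal vector space.

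It therefore remains to check the cardinality matches the dimension of the space of maximal vectors. By the bimodule decomposition of Corollary~\ref{c:SWD}(a), the space of maximal vectors of weight $\lambda$ in $V_q^{\otimes r}$ is isomorphic to $\Specht_q^\lambda$, whose dimension equals the number of standard tableaux of shape $\lambda$. Under the bijection $\pi \mapsto \mathsf{T}(\pi)$ from the introduction, walks $\pi \to \lambda$ of length $r$ are in bijection with standard tableaux of shape $\lambda$. Summing over $\lambda \in \Lambda^+(n,r)$, we conclude
\[
|\Walk(r)| = \sum_{\lambda \in \Lambda^+(n,r)} \dim_\Bbbk \Specht_q^\lambda = \dim_\Bbbk \bigl(V_q^{\otimes r}\bigr)^{\mathrm{max}},
\]
where the last expression denotes the dimension of the maximal vector space. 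An orthogonal linearly independent family of the correct cardinality is a basis, completing the proof.

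There is no real obstacle here: the corollary is essentially a bookkeeping statement. The content is entirely in Theorem~\ref{t:orthogonality}, whose proof is the work of the section; the only subtlety is invoking the bijection between walks and standard tableaux (spelled out in the introduction) to match the counts with the $\HH_q$-multiplicities coming from Schur--Weyl duality.
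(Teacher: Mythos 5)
Your argument is correct and takes essentially the same route as the paper: linear independence from Theorem~\ref{t:orthogonality} (non-isotropy plus pairwise orthogonality), combined with a dimension count against Corollary~\ref{c:SWD}(a). You have simply unpacked the dimension count — via the bijection between walks to $\lambda$ and standard tableaux of shape $\lambda$ — where the paper leaves it implicit.
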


\begin{proof}
This follows from the decomposition Corollary~\ref{c:SWD}(a), by a
dimension count. The listed vectors are non-isotropic by
\ref{t:orthogonality}(b), and thus are linearly independent by
\ref{t:orthogonality}(a).
\end{proof}

The proof of Theorem~\ref{t:orthogonality}, which occupies the rest of
this section, is based on the following adjointness property of the
bilinear form, which generalizes a similar property observed in
\cite{DG:orthog}*{Lemma~3.7}.

\begin{lem}[adjointness]\label{l:adjointness}
Suppose that $b$ and $b'$ are weight vectors in some $V_q^{\otimes k}$ of
respective weights $\lambda$ and $\lambda'$.  Then
\[
\bil{E_i b}{b'} = q^{\alpha_i^\vee(\lambda)+1} \bil{b}{F_i b'}\quad \text{and} \quad
\bil{b}{E_i b'} = q^{\alpha_i^\vee(\lambda')+1} \bil{F_i b}{b'}
\]
for any $i<n$. Since weight vectors of different weight are
orthogonal, both sides of the displayed equalities are zero unless
$\lambda' = \lambda + \alpha_i$ and $\lambda' = \lambda - \alpha_i$,
respectively.
\end{lem}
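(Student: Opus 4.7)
The plan is to prove the first identity $\bil{E_ib}{b'}=q^{\alpha_i^\vee(\lambda)+1}\bil{b}{F_ib'}$ by induction on the tensor degree $k$, and then deduce the second from symmetry of the form.

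For the base case $k=1$, I would use the fact that $\{v_1,\dots,v_n\}$ is orthonormal and that $E_iv_{i+1}=v_i$, $F_iv_i=v_{i+1}$, with $E_i$, $F_i$ killing all other basis vectors. Both $\bil{E_iv_j}{v_\ell}$ and $\bil{v_j}{F_iv_\ell}$ then collapse to $\delta_{j,i+1}\delta_{\ell,i}$, and the only non-trivial case has $\lambda=\varepsilon_{i+1}$, giving $\alpha_i^\vee(\lambda)+1=0$. So the scalar $q^{\alpha_i^\vee(\lambda)+1}=1$, matching the two sides.

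For the inductive step, I would write $V_q^{\otimes k}=V_q\otimes V_q^{\otimes(k-1)}$ and, by bilinearity, take $b=b_1\otimes b_2$, $b'=b_1'\otimes b_2'$ as pure tensors of pure weights $\mu_1,\mu_2$ and $\mu_1',\mu_2'$, so $\lambda=\mu_1+\mu_2$, $\lambda'=\mu_1'+\mu_2'$. Using the coproduct formulas \eqref{e:Delta}, together with $\tK_ib_1=q^{\alpha_i^\vee(\mu_1)}b_1$ and $\tK_i^{-1}b_2'=q^{-\alpha_i^\vee(\mu_2')}b_2'$, expand both sides:
\begin{align*}
\bil{E_ib}{b'} &= \bil{E_ib_1}{b_1'}\bil{b_2}{b_2'}+q^{\alpha_i^\vee(\mu_1)}\bil{b_1}{b_1'}\bil{E_ib_2}{b_2'},\\
\bil{b}{F_ib'} &= q^{-\alpha_i^\vee(\mu_2')}\bil{b_1}{F_ib_1'}\bil{b_2}{b_2'}+\bil{b_1}{b_1'}\bil{b_2}{F_ib_2'}.
\end{align*}
Apply the base case to the first summand of the first expression and the inductive hypothesis to the second; each pair of weight-vector brackets then carries a scalar that must be matched against $q^{\alpha_i^\vee(\lambda)+1}$ times the corresponding bracket in the expansion of $\bil{b}{F_ib'}$. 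The key simplification is that whenever $\bil{b_2}{b_2'}\neq 0$ one has $\mu_2=\mu_2'$ (hence $\alpha_i^\vee(\mu_2)=\alpha_i^\vee(\mu_2')$), and whenever $\bil{b_1}{b_1'}\neq 0$ one has $\mu_1=\mu_1'$. Substituting these equalities collapses the exponents in each surviving term to exactly $\alpha_i^\vee(\mu_1)+\alpha_i^\vee(\mu_2)+1=\alpha_i^\vee(\lambda)+1$.

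The second identity follows from the first by swapping the roles of $b$ and $b'$ and invoking symmetry $\bil{x}{y}=\bil{y}{x}$. I expect the main obstacle to be purely notational: keeping the four possible weight components $\mu_1,\mu_2,\mu_1',\mu_2'$ straight while exploiting the orthogonality of weight vectors of differing weights to replace primed exponents by unprimed ones. Once that bookkeeping is carried out carefully, the identities fall out from the coproduct formula and the $k=1$ verification.
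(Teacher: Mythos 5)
Your argument is correct, and it takes a genuinely different route from the paper's. The paper proves the lemma in a single computation: it observes that for any linear operator $A$ on $V_q^{\otimes k}$ one has $\bil{Ab}{b'}=\bil{b}{A^{T}b'}$ (transpose with respect to the orthonormal standard basis), writes down $\Delta^{(k-1)}(E_i)$ and $\Delta^{(k-1)}(F_i)$ explicitly, notes that $E_i^{T}=F_i$ and $\tK_i^{T}=\tK_i$ on $V_q$, and then massages $(\Delta^{(k-1)}(E_i))^{T}$ into the form $\Delta^{(k-1)}(F_i)\cdot\tK_i^{\otimes k}$ corrected by a factor $q^{-1}$ per tensor slot; evaluating $\tK_i^{\otimes k}$ on the weight vector $b'$ produces the scalar directly. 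Your proof instead inducts on $k$ via the split $V_q^{\otimes k}=V_q\otimes V_q^{\otimes(k-1)}$, expanding both sides with the coproduct and matching the two resulting terms using the base case, the inductive hypothesis, the additivity $\alpha_i^\vee(\lambda)=\alpha_i^\vee(\mu_1)+\alpha_i^\vee(\mu_2)$, and the fact that $\bil{b_2}{b_2'}\neq 0$ forces $\mu_2=\mu_2'$ (which lets you trade the primed exponent $\alpha_i^\vee(\mu_2')$ in the $F_i$-expansion for the unprimed one coming from $\tK_i b_1$). Both approaches are sound; the paper's is a closed-form operator identity that avoids induction, while yours is more mechanical and arguably easier to verify step by step, at the cost of slightly heavier bookkeeping. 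One small inaccuracy in your write-up: you flag both $\mu_2=\mu_2'$ and $\mu_1=\mu_1'$ as needed, but only the former is actually used in the exponent matching (the second term matches without invoking $\mu_1=\mu_1'$); this is harmless but worth noticing.
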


\begin{proof}
As the two displayed equalities in the first claim are equivalent (use
symmetry of the form and interchange $b$, $b'$) it suffices to prove
the first. Furthermore, it suffices to check it on simple tensors, so
we may assume that
\[
b = v_{j_1} \otimes \cdots \otimes v_{j_k} \quad\text{and}\quad b' =
v_{j'_1} \otimes \cdots \otimes v_{j'_k}
\]
where $\lambda' = \lambda+\alpha_i$.  Since the simple tensors form an
orthonormal basis of $V_q^{\otimes k}$, we have
\[
\bil{Ab}{b'}=\bil{b}{A^Tb'}
\]
for any linear operator $A$ on $V_q^{\otimes k}$.  In particular,
$\bil{E_ib}{b'}=\bil{b}{E_i^Tb'}$. Thus, we need to compute
$E_i^T b'$. Recall that $E_i$ and $F_i$ act on $V_q^{\otimes k}$ via iterated
comultiplication:
\begin{align*}
\Delta^{(k-1)}(E_i) &= \sum_{j=1}^k \tK_i^{\otimes(j-1)}\otimes E_i \otimes
1^{\otimes(k-j)} \\ \Delta^{(k-1)}(F_i) &= \sum_{j=1}^k
1^{\otimes(j-1)}\otimes F_i \otimes (\tK_i^{-1}) ^{\otimes(k-j)}
\end{align*}
where $1$ denotes the identity operator on $V_q$.  From the definitions,
we have $E_i^T=F_i$ and $\tK_i^T=\tK_i$ as operators on $V_q$. Also,
$(A\otimes B)^T=A^T\otimes B^T$ for operators $A,B$. Hence,
\begin{align*}
(\Delta^{(k-1)}(E_i))^T &= \sum_{j=1}^k \tK_i^{\otimes(j-1)}\otimes F_i
\otimes 1^{\otimes(k-j)} \\ &= \left(\sum_{j=1}^k1^{\otimes(j-1)}\otimes
F_i\tK_i^{-1} \otimes (\tK_i^{-1}) ^{\otimes(k-j)}\right)\tK_i^{\otimes k}
\end{align*}
as operators on $V_q^{\otimes k}$. Since $\tK_i^{\otimes k}(b') =
q^{\alpha_i^\vee (\lambda')}\,b'$ and $F_i\tK_i^{-1}(v_j) =
q^{-1}F_i(v_j)$ for all $j=1,\ldots n$, we have $E_i^Tb' =
q^{\alpha_j^{\vee}(\lambda')-1}F_i(b)$. The result now follows from
the equality $q^{\alpha_j^{\vee}(\lambda')-1} =
q^{\alpha_j^{\vee}(\lambda)+1}$.
\end{proof}

It is now necessary to explicitly keep track of the dependence of
$\Psi_j = \Psi_{j,\lambda}$ on $\lambda$.  For notational convenience,
we set $\alpha_{i,j} = \alpha_i + \cdots + \alpha_j$ for any $i \le
j$. Then $\alpha_i = \alpha_{i,i}$ and we have
\begin{equation}
  d_{j,\lambda} = \alpha_{1,j}^\vee(\lambda) + j-1 \quad \text{and} \quad
  c_{j,\lambda} = \alpha_{2,j}^\vee(\lambda) + j-1.
\end{equation}
In terms of this notation, $\Psi_j = \Psi_{j,\lambda}$ is defined
recursively by the equations
\[
\Psi_{1,\lambda} = \frac{F_1}{[d_{1,\lambda}]}, \quad \Psi_{j,\lambda}
= \frac{1}{[d_{j,\lambda}]} \big( [c_{j,\lambda}]
F_1\Psi_{j-1,\lambda} - [c_{j,\lambda}-1] \Psi_{j-1,\lambda} F_1
\big) 
\]
for all $j \ge 2$. The following technical result will soon be needed.

\begin{lem}\label{l:alpha1}
Suppose that $j \ge 1$. Then:
\begin{enumerate}
\item $d_{j,\lambda}^{++} = d_{j,\lambda-\alpha_1}^{++}$ and
  $c_{j,\lambda}^{++} = c_{j,\lambda-\alpha_1}^{++}$.
\item $\Psi_{j,\lambda}^{++} = \Psi_{j,\lambda-\alpha_1}^{++}$.
\item $d_{j,\lambda}^{+} +1 = d_{j,\lambda-\alpha_1}^{+}$ and
  $c_{j,\lambda}^{+} = c_{j,\lambda-\alpha_1}^{+}$.
\item $\Psi_{j,\lambda}^{+} = \frac{[d_{j,\lambda}^+ + 1]}{[d_{j,\lambda}^+]}
  \Psi_{j,\lambda-\alpha_1}^{+}$.
\end{enumerate}
\end{lem}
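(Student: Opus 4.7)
The plan is to prove (a) and (c) by direct computation from the closed forms for $d_{j,\lambda}$ and $c_{j,\lambda}$, and then deduce (b) and (d) from them by manipulating the recursion that defines $\Psi_{j,\lambda}$. Throughout, the key observation is that $\lambda - \alpha_1$ differs from $\lambda$ only in entries $1$ and $2$, by $-1$ and $+1$ respectively.

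For (a) and (c), I would start from the closed forms given in Definition~\ref{d:Psi}(i), namely $d_{j,\lambda} = \lambda_1 - \lambda_{j+1} + j - 1$ and $c_{j,\lambda} = \lambda_2 - \lambda_{j+1} + j - 1$. Applying the shift operator $+$ (which sends $\alpha_k \mapsto \alpha_{k+1}$, equivalently shifts the subscripts of the $\lambda$'s upward by one), the double shift yields
\[
d_{j,\lambda}^{++} = \lambda_3 - \lambda_{j+3} + j-1, \qquad c_{j,\lambda}^{++} = \lambda_4 - \lambda_{j+3} + j-1,
\]
neither of which involves $\lambda_1$ or $\lambda_2$, so (a) is immediate. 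Similarly for the single shift, $d_{j,\lambda}^+ = \lambda_2 - \lambda_{j+2} + j-1$ picks up exactly $+1$ under $\lambda \mapsto \lambda - \alpha_1$, while $c_{j,\lambda}^+ = \lambda_3 - \lambda_{j+2} + j-1$ is unaffected, giving (c).

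Part (b) follows directly from (a): since $\Psi_{j,\lambda}^{++}$ is built (by unfolding the recursion of Definition~\ref{d:Psi}(ii) and then shifting twice) from the generators $F_3, \ldots, F_{j+1}$ and from scalars of the form $d_{k,\lambda}^{+(m)}$ and $c_{k,\lambda}^{+(m)}$ with $m \ge 2$, all of which involve only $\lambda_3, \lambda_4, \ldots$, the element is preserved under $\lambda \mapsto \lambda - \alpha_1$. For (d) I would use induction on $j$. The base $j=1$ is immediate from the formulas $\Psi_{1,\lambda}^+ = F_2/[d_{1,\lambda}^+]$ and $\Psi_{1,\lambda-\alpha_1}^+ = F_2/[d_{1,\lambda}^+ + 1]$. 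For $j \ge 2$, apply the shift $+$ to the recursion to obtain
\[
\Psi_{j,\lambda}^+ = \frac{1}{[d_{j,\lambda}^+]} \Big( [c_{j,\lambda}^+]\, F_2\, \Psi_{j-1,\lambda}^{++} - [c_{j,\lambda}^+ - 1]\, \Psi_{j-1,\lambda}^{++}\, F_2 \Big),
\]
together with the analogous identity with $\lambda$ replaced by $\lambda-\alpha_1$. By (b), $\Psi_{j-1,\lambda-\alpha_1}^{++} = \Psi_{j-1,\lambda}^{++}$, and by (c) the scalars $c_{j,\lambda}^+$ agree while $d_{j,\lambda}^+$ is replaced by $d_{j,\lambda}^+ + 1$. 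Comparing the two expressions gives $\Psi_{j,\lambda-\alpha_1}^+ = ([d_{j,\lambda}^+]/[d_{j,\lambda}^+ + 1])\, \Psi_{j,\lambda}^+$, which rearranges to (d).

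The main obstacle, such as it is, is purely bookkeeping: one must carefully track that shifting the recursion for $\Psi_{j,\lambda}$ once yields a recursion for $\Psi_{j,\lambda}^+$ involving $\Psi_{j-1,\lambda}^{++}$ (not $\Psi_{j-1,\lambda}^+$), and that the only dependence of $\Psi_{j,\lambda}^+$ on $\lambda_2$ enters through the $[d_{k,\lambda}^+]$ denominators. Once this is in hand, no nontrivial $q$-integer identities are needed; the entire lemma is a shift-and-substitute exercise.
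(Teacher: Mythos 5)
Your proof is correct and follows essentially the same route as the paper: parts (a) and (c) by direct computation from the closed formulas (the paper phrases it via $\alpha_i^\vee(\alpha_1)=0$ for $i>2$, which is equivalent), part (b) by observing that only twice-or-more shifted scalars enter $\Psi_{j,\lambda}^{++}$ (the paper makes this an explicit induction on $j$), and part (d) by substituting (b) and (c) into the once-shifted recursion. One trivial slip: $\Psi_{j,\lambda}^{++}$ is built from $F_3,\dots,F_{j+2}$, not $F_3,\dots,F_{j+1}$, but this does not affect the argument.
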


\begin{proof}
Part (a) follows from the definitions since $\alpha_i^\vee(\alpha_1) =
0$ for all $i > 2$.
The calculation for $c_{j,\lambda}^{++}$ is similar.

The calculation $\Psi_{1,\lambda}^{++} = F_3 /
[\alpha_3^\vee(\lambda)] = F_3 / [\alpha_3^\vee(\lambda-\alpha_1)] =
\Psi_{1,\lambda-\alpha_1}^{++}$ proves the base case of part (b).
The proof continues by induction on $j$. If $j \ge 2$ and
$\Psi_{j-1,\lambda}^{++} = \Psi_{j-1,\lambda-\alpha_1}^{++}$ then by
Definition~\ref{d:Psi} and the inductive hypothesis we have
\begin{align*}
  \Psi_{j,\lambda-\alpha_1}^{++} &= \frac{1}{[d_{j,\lambda-\alpha_1}^{++}]}
  \big( [c_{j,\lambda-\alpha_1}^{++}] F_3 \Psi_{j-1,\lambda-\alpha_1}^{++}
  -  [c_{j,\lambda-\alpha_1}^{++} - 1] \Psi_{j-1,\lambda-\alpha_1}^{++} F_3 \big) \\
  & = \frac{1}{[d_{j,\lambda}^{++}]}
  \big( [c_{j,\lambda}^{++}] F_3 \Psi_{j-1,\lambda}^{++}
  -  [c_{j,\lambda}^{++} - 1] \Psi_{j-1,\lambda}^{++} F_3 \big)
  = \Psi_{j,\lambda}^{++}
\end{align*}
This completes the proof of part (b). 

Part (c) is proved by direct calculations similar to those in the
proof of part (a).

Part (d) follows directly from parts (b) and (c). By
Definition~\ref{d:Psi} we have
\begin{align*}
  \Psi_{j,\lambda}^{+} &= \frac{1}{[d_{j,\lambda}^+]} \left(
      [c_{j,\lambda}^+] F_2 \Psi_{j-1,\lambda}^{++} - [c_{j,\lambda}^+ - 1]
      \Psi_{j-1,\lambda}^{++} F_2 \right) \\
      &= 
      \frac{1}{[d_{j,\lambda}^+]} \left(
      [c_{j,\lambda-\alpha_1}^+] F_2 \Psi_{j-1,\lambda-\alpha_1}^{++}
      - [c_{j,\lambda-\alpha_1}^+ - 1]
      \Psi_{j-1,\lambda-\alpha_1}^{++} F_2 \right).
\end{align*}
The formula in (d) now follows by inserting the factor
$[d_{j,\lambda}^+ + 1] / [d_{j,\lambda-\alpha_1}^+] = 1$ in the
right hand side of the above and then rearranging.
\end{proof}

Now we are ready to prove the following crucial result.

\begin{prop}\label{p:reduction}
Let $b$ and $b'$ be maximal vectors of the same weight $\lambda$,
where $\lambda$ is a partition of not more than $n$ parts. 
\begin{enumerate}
\item If $j \ge 1$ then $\bigbil{\Psi_{j,\lambda} b}{\Psi_{j,\lambda} b'}
  = \kappa_j \bigbil{\Psi_{j-1,\lambda}^+ b}{\Psi_{j-1,\lambda}^+ b'}$,
  where
  \[
  \kappa_j = 
  \begin{cases}
    q^{1 - \alpha_1^\vee(\lambda)} / [d_{1,\lambda}]
      & \text{ if } j = 1\\
    q^{-\alpha_1^\vee(\lambda)} [c_{j,\lambda}] / [d_{j,\lambda}]
      & \text{ if } j \ge 2.
  \end{cases}
  \]
  
\item If $j \ge 2$ then $\bigbil{\Psi_{j,\lambda} b}{\Psi_{j-1,\lambda}^+ F_1 b'}
  = 0$.

\item If $j \ge 2$ then $\bigbil{\Psi_{j,\lambda} b}{F_1 \Psi_{j-1,\lambda} b'}
  = q^{-\alpha_1^\vee(\lambda)}
  \bigbil{\Psi_{j-1,\lambda}^+ b}{\Psi_{j-1,\lambda}^+ b'}$.
\end{enumerate}
\end{prop}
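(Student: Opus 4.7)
My plan is to prove (a), (b), (c) by a simultaneous induction on $j$, implicitly establishing also their shifted analogues (obtained by replacing each $F_i$ by $F_{i+1}$ and each $\alpha_i$ by $\alpha_{i+1}$ throughout). Part (c) stands apart and requires no induction: writing $a_i=\alpha_i^\vee(\lambda)$, the vector $\Psi_{j,\lambda} b$ has weight $\lambda-\alpha_{1,j}$, so $\alpha_1^\vee$ evaluates to $a_1-1$ at this weight. A single application of the adjointness lemma converts the left-hand side to $q^{-a_1}\bigbil{E_1\Psi_{j,\lambda} b}{\Psi_{j-1,\lambda}^+ b'}$, and Proposition~\ref{p:1} identifies $E_1\Psi_{j,\lambda} b=\Psi_{j-1,\lambda}^+ b$, yielding the claim.

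For (a): the case $j=1$ is a direct computation using adjointness and the contraction lemma. For $j\ge 2$, assume (b) for the same $j$ and expand $\Psi_{j,\lambda} b$ via its recursion. The cross-term $\bigbil{\Psi_{j-1}^+ F_1 b}{\Psi_j b'}$ vanishes by symmetry of the form and (b) applied with $b,b'$ swapped; the remaining term, after moving $F_1$ by adjointness and invoking $E_1\Psi_j b'=\Psi_{j-1}^+ b'$ from Proposition~\ref{p:1}, gives $\kappa_j\bigbil{\Psi_{j-1}^+ b}{\Psi_{j-1}^+ b'}$ exactly.

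For (b), the inductive step $j\ge 3$: expand the recursion of $\Psi_j$ to obtain $\bigbil{\Psi_j b}{\Psi_{j-1}^+ F_1 b'}=\tfrac{[c_j]}{[d_j]}A-\tfrac{[c_j-1]}{[d_j]}B$, with $A=\bigbil{F_1\Psi_{j-1}^+ b}{\Psi_{j-1}^+ F_1 b'}$ and $B=\bigbil{\Psi_{j-1}^+ F_1 b}{\Psi_{j-1}^+ F_1 b'}$. Move the left $F_1$ in $A$ via adjointness, exploit $[E_1,\Psi_{j-1}^+]=0$ (since $\Psi_{j-1}^+$ involves only $F_2,\dots,F_j$) and contract to get $A=q^{-a_1}[a_1]\bigbil{\Psi_{j-1}^+ b}{\Psi_{j-1}^+ b'}$. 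For $B$, expand the left $\Psi_{j-1}^+$ via its shifted recursion: one of the two resulting pieces reduces (after moving $F_1$) to a scalar multiple of $\bigbil{\Psi_{j-2}^{++} F_2 b}{\Psi_{j-1}^+ b'}$, which vanishes by the shifted (b) (inductive hypothesis); the other reduces, via adjointness, the shifted form of Lemma~\ref{l:E1-effect} (giving $E_2\Psi_{j-1}^+ F_1 b'=\tfrac{[c_j]}{[d_{j-1}^+]}\Psi_{j-2}^{++} F_1 b'$), and $[F_1,\Psi_{j-2}^{++}]=0$, to $\tfrac{q^{-a_1-a_2}[a_1][c_{j-1}^+][c_j]}{[d_{j-1}^+]^2}\bigbil{\Psi_{j-2}^{++} b}{\Psi_{j-2}^{++} b'}$. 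Using the shifted (a) to rewrite $A$ in the same basis, the total coefficient becomes a nonzero prefactor times $1-[c_j-1]/[d_{j-1}^+]$, and the crucial identity $c_j-1=d_{j-1}^+$ (noted just after Definition~\ref{d:Psi} as $d_{n-1}^++1=c_n$) forces this to zero. The base case $j=2$ is a parallel direct calculation not requiring the induction.

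The main obstacle is the computation of $B$: the shifted recursive expansion, nested adjointness moves with careful tracking of $q$-power and $q$-integer factors, and the shifted form of Lemma~\ref{l:E1-effect} applied to the non-maximal vector $F_1 b'$. The payoff is the identity $c_j-1=d_{j-1}^+$, which reveals that the coefficients in the recursive definition of $\Psi_j$ are tuned precisely to make (b) hold.
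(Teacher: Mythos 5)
Your proof of part (c) is genuinely cleaner than the paper's: one application of Lemma~\ref{l:adjointness} converts $\bigbil{\Psi_{j,\lambda}b}{F_1\Psi_{j-1,\lambda}^+b'}$ to $q^{-\alpha_1^\vee(\lambda)}\bigbil{E_1\Psi_{j,\lambda}b}{\Psi_{j-1,\lambda}^+b'}$, and Proposition~\ref{p:1} finishes. The paper instead expands $\Psi_j$ via the recursion and applies adjointness and contraction twice. Similarly your (a) for $j\ge 2$ (expand $\Psi_j b$, kill the cross-term by (b)$_j$ plus symmetry, and handle the remaining term by one adjointness move and Proposition~\ref{p:1}) is a shorter path to the same place as the paper's STEP~4. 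Your computation of the first piece of $B$ via the shifted recursion, adjointness on $F_2$, the shifted form of Lemma~\ref{l:E1-effect} applied to $F_1b'$, and the identity $c_{j-1}^+ + \alpha_2^\vee(\lambda-\alpha_1)=c_j$, also checks out, and the final cancellation using $d_{j-1}^+ = c_j-1$ is exactly right.

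The gap is in the dismissal of the second piece of $B$, namely $\bigbil{\Psi_{j-2}^{++}F_2F_1 b}{\Psi_{j-1}^+F_1 b'}$. You assert that ``after moving $F_1$'' this becomes a scalar multiple of $\bigbil{\Psi_{j-2}^{++}F_2 b}{\Psi_{j-1}^+ b'}$, to which shifted (b)$_{j-1}$ applies. But Lemma~\ref{l:adjointness} only transfers a generator that is the outermost factor in its slot, and here $F_1$ is trapped on both sides: in the first slot it sits to the right of $F_2$ (which does not commute with $F_1$), and in the second slot it sits to the right of $\Psi_{j-1}^+$ (which contains $F_2$). Neither $F_1$ can be peeled off, so the proposed reduction is not available. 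The term does vanish, but by a different mechanism: after symmetrizing, it is a shifted instance of (b)$_{j-1}$ applied to $F_1 b'$ and $F_1 b$, which are maximal of weight $\lambda-\alpha_1$ for the parabolic subalgebra generated by $E_2,\dots,E_{n-1}$. To invoke that inductive hypothesis one must first reconcile the $\lambda$-dependence of the coefficients: by Lemma~\ref{l:alpha1}(b) one has $\Psi_{j-2,\lambda}^{++}=\Psi_{j-2,\lambda-\alpha_1}^{++}$, and by Lemma~\ref{l:alpha1}(d) $\Psi_{j-1,\lambda}^{+}$ is a nonzero scalar multiple of $\Psi_{j-1,\lambda-\alpha_1}^{+}$. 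Your write-up never acknowledges that the $\Psi$'s appearing were defined relative to $\lambda$ while the vectors $F_1b, F_1b'$ live at $\lambda-\alpha_1$; this is precisely the subtlety that Lemma~\ref{l:alpha1} (which you do not cite) was designed to handle, and it is the crux of the paper's STEP~5. Once that lemma is invoked, your approach does go through, and in fact uses shifted (b)$_{j-1}$ in place of the paper's shifted (a)$_{j}$ at $\lambda-\alpha_1$ --- a legitimately different decomposition of the argument, but not a shortcut that avoids Lemma~\ref{l:alpha1}.
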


\begin{proof}
The three parts are interdependent, and will be proved by an
interleaved induction. Applications of Lemma~\ref{l:contraction}
appear frequently, and will be referred to as contractions.  We first
outline the argument and then elaborate on the details. Let $A_j$,
$B_j$, $C_j$ be the equalities in parts (a), (b), (c) respectively,
each of which depends on $j$.

\medskip
\emph{Overview.}
\par STEP 1.
Note that statement $C_j$ is true for all $j
\ge 2$ by direct calculation, using the adjointness lemma
(Lemma~\ref{l:adjointness}). This proves part (c) of
the proposition.

STEP 2. Prove $A_1$ directly, again using adjointness.

STEP 3. To prove $B_2$, we need to show that $\smbil{\Psi_{2,\lambda}
  b}{\Psi_{1,\lambda}^+ F_1 b'} = 0$. Expanding $\Psi_{2,\lambda}$ and
$\Psi_{1,\lambda}^+$ (see Example~\ref{ex:1}) we see after clearing
denominators that $B_2$ is equivalent to the equality
\[
\bil{[c_{2,\lambda}] F_1F_2 b - [c_{2,\lambda}-1] F_2F_1 b}{F_2F_1 b'}
= 0.
\]
This equality is verified by two applications of adjointness.

STEP 4. Observe that $B_j$ and $C_j$ together immediately imply $A_j$,
for all $j \ge 2$. This follows by expanding the second term in
$\bil{\Psi_{j,\lambda} b}{\Psi_{j,\lambda} b'}$ using
Definition~\ref{d:Psi}.

STEP 5. At this point, we know that statements $A_1$ and $B_2$ are
true. Thus $A_2$ is also true since $C_2$ is true.  Finally, we claim
that $A_j$ implies $B_{j+1}$ for all $j \ge 2$.  Once this claim is
proved, we conclude by induction that $A_j$ and $B_j$ are true for all
$j \ge 2$. This completes the proof, once the further details have
been verified.

\medskip
\emph{Further details.}  \par\noindent STEP 1.
From Definition~\ref{d:Psi} we have
\begin{equation}\label{e:step1}
\begin{aligned}
\bigbil{\Psi_{j,\lambda} b}{F_1 \Psi_{j-1,\lambda}^+ b'} &=
\frac{[c_{j,\lambda}]}{[d_{j,\lambda}]} \bigbil{F_1 \Psi_{j-1,\lambda}^+ b}{F_1
  \Psi_{j-1,\lambda}^+ b'} \\ &
\hspace{1.5cm} - \frac{[c_{j,\lambda}-1]}{[d_{j,\lambda}]}
\bigbil{\Psi_{j-1,\lambda}^+ F_1 b}{F_1 \Psi_{j-1,\lambda}^+ b'}.
\end{aligned}
\end{equation}
Set $a_i = \alpha_i^\vee(\lambda)$. Now we apply
adjointness (Lemma~\ref{l:adjointness}) twice to get
\begin{align*}
  \bigbil{F_1 \Psi_{j-1,\lambda}^+ b}{F_1 \Psi_{j-1,\lambda}^+ b'} &=
  q^{-a_1} \bigbil{E_1 F_1 \Psi_{j-1,\lambda}^+ b}{\Psi_{j-1,\lambda}^+ b'}, \\
  \bigbil{\Psi_{j-1,\lambda}^+ F_1 b}{F_1 \Psi_{j-1,\lambda}^+ b'} &=
  q^{-a_1} \bigbil{E_1 \Psi_{j-1,\lambda}^+ F_1 b}{\Psi_{j-1,\lambda}^+ b'}.
\end{align*}
Since $E_1$ commutes past $\Psi_{j-1,\lambda}^+$ in the right hand
side of the second equality, we can by Lemma~\ref{l:contraction}
contract an $E_1F_1$ in each equality to get
\begin{align*}
  \bigbil{F_1 \Psi_{j-1,\lambda}^+ b}{F_1 \Psi_{j-1,\lambda}^+ b'} &=
  q^{-a_1} [a_1+1] \bigbil{\Psi_{j-1,\lambda}^+ b}{\Psi_{j-1,\lambda}^+ b'}, \\
  \bigbil{\Psi_{j-1,\lambda}^+ F_1 b}{F_1 \Psi_{j-1,\lambda}^+ b'} &=
  q^{-a_1} [a_1] \bigbil{\Psi_{j-1,\lambda}^+ b}{\Psi_{j-1,\lambda}^+ b'}.
\end{align*}
Putting these last two equalities back into the right hand side of
\eqref{e:step1} yields
\[
\bigbil{\Psi_{j,\lambda} b}{F_1 \Psi_{j-1,\lambda}^+ b'} =
\frac{q^{-a_1}}{[d_{j,\lambda}]} \big( [c_{j,\lambda}][a_1+1] -
     [c_{j,\lambda}-1][a_1] \big)
\bigbil{\Psi_{j-1,\lambda}^+ b}{\Psi_{j-1,\lambda}^+ b'} .
\]
After an application of the $q$-identity lemma
(Lemma~\ref{l:q-int-identity}) we get the equality in statement $C_j$,
since $a_1+c_{j,\lambda} = d_{j,\lambda}$.

STEP 2. Statement $A_1$ is checked by a similar application of
adjointness; we leave this calculation to the reader.

STEP 3. We prove statement $B_2$.  As previously mentioned, we only
need to show that
\[
[c_{2,\lambda}] \bil{F_1F_2 b}{F_2F_1 b'} - [c_{2,\lambda}-1]
\bil{F_2F_1 b}{F_2F_1 b'} = 0 .
\]
Apply adjointness to the left hand side to get
\[
[c_{2,\lambda}] q^{-a_1} \bil{F_2 b}{E_1F_2F_1 b'} - [c_{2,\lambda}-1]
q^{-a_2} \bil{F_1 b}{E_2F_2F_1 b'}.
\]
In the first term above, commute $E_1$ with $F_2$.  After contracting the
occurrences of $E_1F_1$ and $E_2F_2$ in the first and second terms,
respectively, this becomes
\[
q^{-a_1} [c_{2,\lambda}][a_1] \bil{F_2 b}{F_2 b'} - q^{-a_2}
[c_{2,\lambda}-1][a_2+1] \bil{F_1 b}{F_1 b'}.
\]
Now we apply adjointness and contract one more time to rewrite the
above in the form
\[
q^{-a_1} [c_{2,\lambda}][a_1] q^{1-a_2} [a_2] \bil{b}{b'} - q^{-a_2}
[c_{2,\lambda}-1][a_2+1] q^{1-a_1} [a_1] \bil{b}{b'}
\]
and since $c_{2,\lambda} = a_2+1$ and the powers of $q$ are the same,
this simplifies to zero, as required.

STEP 4 needs no further details.

STEP 5. It remains only to prove the claim that $A_j$ implies
$B_{j+1}$, for $j \ge 2$. This is the most delicate part of the
argument. By Definition~\ref{d:Psi} applied to $\Psi_{j+1,\lambda} b$,
we have
\begin{equation}\label{e:11}
\begin{aligned}
\bigbil{\Psi_{j+1,\lambda} b}{\Psi_{j,\lambda}^+ F_1 b'} &=
\frac{[c_{j+1,\lambda}]}{[d_{j+1,\lambda}]}
\bigbil{F_1 \Psi_{j,\lambda}^+ b}{\Psi_{j,\lambda}^+ F_1 b'} \\
&\hspace{2cm} - \frac{[c_{j+1,\lambda}]-1}{[d_{j+1,\lambda}]}
\bigbil{\Psi_{j,\lambda}^+ F_1 b}{\Psi_{j,\lambda}^+ F_1 b'}.
\end{aligned}
\end{equation}
We now compute the two pairings on the right hand side of equation
\eqref{e:11}.  We begin with the first, which by adjointness satisfies
\[
\bigbil{F_1 \Psi_{j,\lambda}^+ b}{\Psi_{j,\lambda}^+ F_1 b'} =
q^{-a_1} \bigbil{\Psi_{j,\lambda}^+ b}{E_1\Psi_{j,\lambda}^+ F_1 b'}.
\]
We may commute $E_1$ with $\Psi_{j,\lambda}^+$ and then apply
contraction to the term $E_1F_1$ to obtain the simplification
\[
\bigbil{F_1 \Psi_{j,\lambda}^+ b}{\Psi_{j,\lambda}^+ F_1 b'} =
q^{-a_1} [a_1] \bigbil{\Psi_{j,\lambda}^+ b}{\Psi_{j,\lambda}^+ b'}.
\]
Now we apply a shifted version of statement $A_j$ to the right hand
side above to obtain the result
\begin{equation}\label{e:12}
  \bigbil{F_1 \Psi_{j,\lambda}^+ b}{\Psi_{j,\lambda}^+ F_1 b'} =
  q^{-a_1-a_2} \frac{[a_1] [c_{j,\lambda}^+]}{[d_{j,\lambda}^+]}
  \bigbil{\Psi_{j-1,\lambda}^{++} b}{\Psi_{j-1,\lambda}^{++} b'}.
\end{equation}
To compute the second pairing on the right hand side of \eqref{e:11},
we first apply Lemma~\ref{l:alpha1}(d) to get the equality
\[
\bigbil{\Psi_{j,\lambda}^+ F_1 b}{\Psi_{j,\lambda}^+ F_1 b'} =
\frac{[d_{j,\lambda}^+ + 1]^2}{[d_{j,\lambda}^+]^2}
\bigbil{\Psi_{j,\lambda-\alpha_1}^+ F_1 b}{\Psi_{j,\lambda-\alpha_1}^+ F_1 b'}.
\]
The next step is rather subtle. Observe that $F_1 b$ and $F_1 b'$ are
maximal vectors (each of weight $\lambda - \alpha_1$) with respect to
the parabolic root system obtained by deleting the first node of the
Dynkin diagram. Thus, we may apply a shifted version of the equality
in statement $A_j$ to write the pairing on the right hand side of the
above as a multiple of
$\bigbil{\Psi_{j-1,\lambda-\alpha_1}^{++} F_1b}%
{\Psi_{j-1,\lambda-\alpha_1}^{++} F_1 b'}$. With this, the right hand
side of the above takes the form 
\[
\frac{q^{-a_2-1}[d_{j,\lambda}^+ + 1]^2[c_{j,\lambda-\alpha_1}^+]}
     {[d_{j,\lambda}^+]^2 [d_{j,\lambda-\alpha_1}^+]}
     \bigbil{\Psi_{j-1,\lambda-\alpha_1}^{++} F_1 b}
            {\Psi_{j-1,\lambda-\alpha_1}^{++} F_1 b'}.
\]
Now we commute the $F_1$ to the left of
$\Psi_{j-1,\lambda-\alpha_1}^{++}$ in each term of the pairing and
then apply adjointness to rewrite the above in the form
\[
\frac{q^{-a_2-1}[d_{j,\lambda}^+ + 1]^2[c_{j,\lambda-\alpha_1}^+]}
     {[d_{j,\lambda}^+]^2 [d_{j,\lambda-\alpha_1}^+]}
     q^{1-a_1} \bigbil{E_1F_1\Psi_{j-1,\lambda-\alpha_1}^{++} b}
            {\Psi_{j-1,\lambda-\alpha_1}^{++} b'}.
\]
Contracting the occurrence of $E_1F_1$ (and combining the
powers of $q$) yields the expression
\[
\frac{q^{-a_1-a_2}[d_{j,\lambda}^+ + 1]^2[c_{j,\lambda-\alpha_1}^+][a_1]}
{[d_{j,\lambda}^+]^2 [d_{j,\lambda-\alpha_1}^+]}
\bigbil{\Psi_{j-1,\lambda-\alpha_1}^{++} b}
{\Psi_{j-1,\lambda-\alpha_1}^{++} b'}.
\]
But $d_{j,\lambda}^+ + 1 = d_{j,\lambda-\alpha_1}^+$ and
$c_{j,\lambda}^+ = c_{j,\lambda-\alpha_1}^+$, so once again applying
Lemma~\ref{l:alpha1} the above takes the form
\[
\frac{q^{-a_1-a_2}[d_{j,\lambda}^+ + 1][c_{j,\lambda}^+][a_1]}
{[d_{j,\lambda}^+]^2}
\bigbil{\Psi_{j-1,\lambda}^{++} b}{\Psi_{j-1,\lambda}^{++} b'}.
\]   
Finally, we put this and the right hand side of \eqref{e:12} back into
the right hand side of equation \eqref{e:11}, to obtain the following
scalar 
\[
\frac{q^{-a_1-a_2}[a_1][c_{j,\lambda}^+]}{[d_{j,\lambda}^+]^2[d_{j+1,\lambda}]}
\left( [c_{j+1,\lambda}] [d_{j,\lambda}^+] -
     [c_{j+1,\lambda} - 1][d_{j,\lambda}^+ + 1] \right)
\]
multiplied by $\bigbil{\Psi_{j-1,\lambda}^{++}
  b}{\Psi_{j-1,\lambda}^{++} b'}$.  But $d_{j,\lambda}^+ =
c_{j+1,\lambda} - 1$, so the above scalar evaluates to zero, and thus
we conclude that the left hand side of \eqref{e:11} is equal to zero.
This is statement $B_{j+1}$, so the claim is proved.
\end{proof}

From now on, we will fix $\lambda$ and suppress the dependence on
$\lambda$ in the notation. The reduction formula in
Proposition~\ref{p:reduction}(a) gives the following.

\begin{thm}\label{t:Phi-form}
Suppose that $b$ and $b'$ are maximal vectors of weight
$\lambda$. Fix $\lambda$ and set $c_j = c_{j,\lambda}$ and $d_j =
d_{j,\lambda}$. Then for all $m \ge 1$ we have:
\begin{enumerate}
\item $\bigbil{\Psi_m b}{\Psi_m b'} = \tau_m \bigbil{b}{b'}$, where
  \[   \tau_m = 
  \begin{cases}
  q^{1-\alpha_1^\vee(\lambda)} \frac{1}{[d_{1}]} & \text{ if
    $m=1$} \\
  q^{1-\alpha_{1,m}^\vee(\lambda)}
  \frac{1}{[d_1^{+(m-1)}]} \frac{[c_2^{+(m-2)}]}{[d_2^{+(m-2)}]}
  \cdots \frac{[c_m]}{[d_m]} & \text{ if $m \ge 2$}.
  \end{cases}
  \]
\item $\bigbil{\Phi_m(b)}{\Phi_m(b')} = \rho_m \bigbil{b}{b'}$, where
  \[
  \rho_m = q^{1-m} \frac{[d_1^{+(m-2)}+1]}{[d_1^{+(m-2)}]}
  \frac{[d_2^{+(m-3)}+1]}{[d_2^{+(m-3)}]} \cdots
  \frac{[d_{m-1}+1]}{[d_{m-1}]}.
  \]
\end{enumerate}
\end{thm}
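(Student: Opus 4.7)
The plan is to iterate Proposition~\ref{p:reduction}(a) a total of $m$ times. Starting from $\bigbil{\Psi_m b}{\Psi_m b'} = \kappa_m \bigbil{\Psi_{m-1}^+ b}{\Psi_{m-1}^+ b'}$, observe that $b,b'$ remain maximal with respect to the parabolic subalgebra corresponding to the roots $\alpha_2, \ldots, \alpha_{n-1}$. Hence a $+$-shifted instance of the same Proposition yields $\bigbil{\Psi_{m-1}^+ b}{\Psi_{m-1}^+ b'} = \kappa_{m-1}^+ \bigbil{\Psi_{m-2}^{++} b}{\Psi_{m-2}^{++} b'}$, and so on. After $m$ iterations, using $\Psi_0 = 1$, we obtain $\tau_m = \kappa_m \kappa_{m-1}^+ \cdots \kappa_1^{+(m-1)}$. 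Collecting the $q$-exponents, and noting that shifting $\alpha_1^\vee(\lambda)$ by $+(k)$ produces $\alpha_{k+1}^\vee(\lambda)$, the combined exponent becomes $1 - \sum_{i=1}^{m} \alpha_i^\vee(\lambda) = 1 - \alpha_{1,m}^\vee(\lambda)$; the product of the rational factors is exactly what is claimed.

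\textbf{Proof of (b).} Proceed by induction on $m$. The base case $m=1$ gives $\bigbil{\Phi_1(b)}{\Phi_1(b')} = \bigbil{v_1 \otimes b}{v_1 \otimes b'} = \bigbil{b}{b'}$, matching $\rho_1 = 1$. For the inductive step, apply the recursion~\eqref{e:Phi-recursion}:
\[
\Phi_m(b) = \Phi_{m-1}^+(b) + (-q^{-1})^{m-1} v_1 \otimes \Psi_{m-1} b.
\]
The key observation is that, by Definition~\ref{d:Phi}, $\Phi_{m-1}^+(b)$ is a sum of simple tensors whose leading tensor factor is always some $v_i$ with $2 \le i \le m$, never $v_1$. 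Hence the cross terms with $v_1 \otimes \Psi_{m-1}b'$ (and symmetrically) in the expansion of $\bigbil{\Phi_m(b)}{\Phi_m(b')}$ vanish by orthonormality of $\{v_1, \ldots, v_n\}$, yielding
\[
\bigbil{\Phi_m(b)}{\Phi_m(b')} = \bigbil{\Phi_{m-1}^+(b)}{\Phi_{m-1}^+(b')} + q^{-2(m-1)} \bigbil{\Psi_{m-1} b}{\Psi_{m-1} b'}.
\]
Applying the inductive hypothesis in its shifted form to the first summand and part~(a) to the second reduces the theorem to the scalar identity
\[
\rho_m = \rho_{m-1}^+ + q^{-2(m-1)} \tau_{m-1}.
\]

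\textbf{The numerical identity (main obstacle).} Split off the $k = m-1$ factor from the product defining $\rho_m$ to write $\rho_m = q^{-1} \frac{[d_{m-1}+1]}{[d_{m-1}]} \rho_{m-1}^+$. Apply Lemma~\ref{l:q-int-identity}(b) in the form $q^{-1}[d_{m-1}+1] = [d_{m-1}] + q^{-d_{m-1}-1}$ to decompose this as $\rho_m = \rho_{m-1}^+ + \frac{q^{-d_{m-1}-1}}{[d_{m-1}]} \rho_{m-1}^+$. The identity thus reduces to
\[
q^{-2(m-1)} \tau_{m-1} = \frac{q^{-d_{m-1}-1}}{[d_{m-1}]}\, \rho_{m-1}^+.
\]
Substituting the explicit formulas, the verification hinges on matching each numerator $[c_j^{+(m-1-j)}]$ appearing in $\tau_{m-1}$ against $[d_{j-1}^{+(m-j)} + 1]$ appearing in $\rho_{m-1}^+$; this is exactly the relation $c_n = d_{n-1}^+ + 1$ recorded in Definition~\ref{d:Psi}(i). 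The $q$-exponent bookkeeping is closed using $\alpha_{1,m-1}^\vee(\lambda) = d_{m-1} - (m-2)$. The only real delicacy is keeping the nested shifts aligned throughout.
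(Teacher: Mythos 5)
Your proof is correct and takes essentially the same approach as the paper's: both parts reduce to Proposition~\ref{p:reduction}(a), and the induction on $m$ in part~(b) comes down to verifying the same scalar identity $\rho_{m+1} = \rho_m^+ + q^{-2m}\tau_m$ (you write it with $m$ shifted down by one), invoking Lemma~\ref{l:q-int-identity}(b) and the relation $d_{j}^+ + 1 = c_{j+1}$. The only cosmetic difference is in the final arithmetic: the paper puts the two summands over a common denominator before applying the $q$-identity, whereas you peel off the single factor $q^{-1}[d_{m-1}+1]/[d_{m-1}]$ from $\rho_m$ and apply the $q$-identity to that factor first.
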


\begin{proof}
Part (a) is immediate from Proposition~\ref{p:reduction}(a). Part (b)
follows from part (a) and the sum formula
\[
\bigbil{\Phi_m(b)}{\Phi_m(b')} = \sum_{j=0}^{m-1} q^{-2j}
\bigbil{\Psi_j^{+(m-1-j)} b}{\Psi_j^{+(m-1-j)} b'}.
\]
Note that $\bigbil{\Psi_0\, b}{\Psi_0\, b'} = \bigbil{b}{b'}$, as
$\Psi_0=1$. In general, the result in (b) follows by induction on
$m$. For $m \ge 1$, assuming that $\rho_m$ and $\tau_m$ are given by
the stated formulas, we need to show that $\rho_{m+1} = \rho_m^+ +
q^{-2m} \tau_m$. To verify this, we begin with
\[
\rho_m^+ = q^{1-m} \frac{[d_1^{+(m-1)}+1]}{[d_1^{+(m-1)}]}
\frac{[d_2^{+(m-2)}+1]}{[d_2^{+(m-2)}]} \cdots
\frac{[d^+_{m-1}+1]}{[d^+_{m-1}]}.
\]
Now use the fact that $d_j^+ + 1 = c_{j+1}$, for all $j$, to rewrite
the above equality in the form
\[
\rho_m^+ = q^{1-m} \frac{[c_2^{+(m-2)}]}{[d_1^{+(m-1)}]}
\frac{[c_3^{+(m-3)}]}{[d_2^{+(m-2)}]} \cdots
\frac{[c_m]}{[d^+_{m-1}]}.
\]
Now add $q^{-2m} \tau_m$ to both sides, using the definition of
$\tau_m$. After factoring common terms in the result, we obtain
\[
\rho_m^+ + q^{-2m} \tau_m = q^{1-m} 
\frac{[c_2^{+(m-2)}] [c_3^{+(m-3)}] \cdots [c_m]}
     {[d_1^{+(m-1)}] [d_2^{+(m-2)}] \cdots [d_m]}
     \left( [d_m] + q^{-m - \alpha_{1,m}^\vee(\lambda)} \right).
\]
Since $d_m = \alpha_{1,m}^\vee(\lambda) + m-1$, we have $-d_m-1 = -m -
\alpha_{1,m}^\vee(\lambda)$.  By the second $q$-integer identity in
Lemma~\ref{l:q-int-identity}, the expression inside the parentheses in
the above displayed equality simplifies to $q^{-1} [d_m+1]$. Once
again using the equality $d_j^+ + 1 = c_{j+1}$, the above simplifies
to
\[
\rho_m^+ + q^{-2m} \tau_m = q^{-m}
\frac{[d_1^{+(m-2)} + 1] [d_2^{+(m-2)} + 1] \cdots [d_m+1]}
     {[d_1^{+(m-1)}] [d_2^{+(m-2)}] \cdots [d_m]}
\]
which is equal to $\rho_{m+1}$. The proof is complete.     
\end{proof}

We are finally ready to give the proof of
Theorem~\ref{t:orthogonality}.  The proof is by induction on the length
of the walks. Let $\pi$ and $\pi'$ be walks of the same length on the
Bratteli diagram. If they terminate at different nodes $\lambda \ne
\lambda'$ then $\bil{\cc_\pi}{\cc_{\pi'}} = 0$ because $\cc_\pi$ and
$\cc_{\pi'}$ have different weights (and weight vectors of distinct
weights are orthogonal). So we may assume that $\pi$ and $\pi'$ both
terminate at the same node $\lambda$.

If $\pi \ne \pi'$ then there must be two distinct intermediate nodes
$\mu \ne \mu'$ in the Bratteli diagram such that $\pi$ and $\pi'$
visit $\mu$ and $\mu'$, respectively.  Let $\pi_0$ and $\pi'_0$ be the
subwalks (of $\pi$ and $\pi'$) to these respective nodes. Then by
iterating Theorem~\ref{t:Phi-form} we know that
$\bil{\cc_\pi}{\cc_{\pi'}}$ may be written as a scalar multiple of
$\bil{b}{b'}$, where $b = \cc_{\pi_0}$ and $b' = \cc_{\pi'_0}$. Since
$\bil{b}{b'} = 0$ by the inductive hypothesis, we see that
$\bil{\cc_\pi}{\cc_{\pi'}} = 0$. This concludes the proof of part (a).

Part (b) follows from Theorem~\ref{t:Phi-form} and the fact that $q$
is not a root of unity. The proof of \ref{t:orthogonality} is complete.

\section{Further properties of the $\Psi$ operators}\label{s:further}
\noindent
In this section we fix $n$, and write $\UU_q = \UU_q(\fgl_n)$. Fix a
maximal vector $b$ (in some $\UU_q$-module) of weight $\lambda$.
The definition of $\Phi_m(b)$ (for $m = 1,\dots, n$) may be
written in the form
\[
\Phi_m(b) = v_m \otimes b \;\; + \;\;
\sum_{j=1}^{m-1} (-q^{-1})^j \, v_{m-j} \otimes \Psi_j^{+(m-1-j)} b.
\]
The summation on the right hand side above is vacuous in case $m=1$,
producing $\Phi_1(b) = v_1 \otimes b$. We wish to analyze the terms
appearing in that summation in the cases $m = 2, \dots, n$, where it
is not vacuous.

For $m \ge 2$, $\Phi_m(b)$ is defined if and only if
$\alpha_{m-1}^\vee(\lambda) \ne 0$.  By Lemma~\ref{l:wt-lem}, the
weight of $\Psi_j^{+(m-1-j)} b$ is $\lambda - \ep_{m-j} + \ep_m =
\lambda - (\alpha_{m-j}+ \cdots + \alpha_{m-1})$ and thus the operator
$\Psi_j^{+(m-1-j)}$ has weight $-(\alpha_{m-j}+ \cdots +
\alpha_{m-1})$.


We know that $b$ generates an isomorphic copy of the $q$-Weyl module
$V_q(\lambda)$. Assuming that $\alpha_{m-1}^\vee(\lambda) \ne 0$ if $m
\ge 2$, the formula for $\Phi_m(b)$ sets up a linear map
$\xi_m^\lambda: V_q(1) \to \UU_q^-$ defined on basis vectors by
\begin{equation*}
  \xi_m^\lambda : v_{m-j} \mapsto \Psi_j^{+(m-1-j)} \qquad \text{for
    $j = 0, \dots, m-1$}.
\end{equation*}
The map $\xi_m^\lambda$ depends on $\lambda$ but not on $b$.  In terms
of this notation, we have $\Phi_m(b) = v_m \otimes b +
\sum_{j=1}^{m-1} (-q^{-1})^j v_{m-j} \otimes \xi_m^\lambda(v_{m-j})
b$. Reindexing, this becomes
\[
\Phi_m(b) = v_m \otimes b \;\; + \;\;
\sum_{j=1}^{m-1} (-q^{-1})^{m-j} v_{j} \otimes \xi_m^\lambda(v_{j}) b
\]
where $\xi_m^\lambda(v_{j}) = \Psi_{m-j}^{+(j-1)}$ for $j = 1, \dots,
m-1$. We summarize these observations.

\begin{lem}
  For $m \ge 2$, assume that $\alpha_{m-1}^\vee(\lambda) \ne 0$. The
  map
  \[
  \xi_m^\lambda: V_q(1) \to \UU^-
  \]
  sends $v_j$ to $\Psi_{m-j}^{+(j-1)}$, an operator in $\UU^-$ of
  weight $-(\alpha_j + \cdots + \alpha_{m-1})$, for each $1 \le j \le
  m-1$.
\end{lem}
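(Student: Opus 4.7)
The lemma is essentially a bookkeeping statement: it repackages information already established. The plan is to verify three things in sequence, all of which reduce to observations made earlier in the paper.

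First, I would justify the formula $\xi_m^\lambda(v_j) = \Psi_{m-j}^{+(j-1)}$. The map was originally defined by $\xi_m^\lambda(v_{m-j}) = \Psi_j^{+(m-1-j)}$ for $j = 0, \dots, m-1$, arising from the sum in $\Phi_m(b)$. Substituting the change of index $j' = m-j$ (so $j = m-j'$ and hence $m-1-j = j'-1$) and then renaming $j'$ back to $j$ yields precisely $\xi_m^\lambda(v_j) = \Psi_{m-j}^{+(j-1)}$ for $j = 1, \dots, m-1$. This is a straightforward reindexing, already implicit in the discussion immediately preceding the statement.

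Second, I would verify that each operator $\Psi_{m-j}^{+(j-1)}$ is well defined under the hypothesis $\alpha_{m-1}^\vee(\lambda) \neq 0$. By Lemma~\ref{l:Psi-is-defined}, $\Psi_k$ is defined iff $\alpha_k^\vee(\lambda) \neq 0$, and shifting $+$ replaces the index $k$ by $k+1$. Hence $\Psi_{m-j}^{+(j-1)}$ is defined iff $\alpha_{(m-j)+(j-1)}^\vee(\lambda) = \alpha_{m-1}^\vee(\lambda) \neq 0$, which is exactly the hypothesis. This is the same cancellation used in the earlier lemma determining when $\Phi_m(b)$ is defined.

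Third, I would compute the weight. Lemma~\ref{l:wt-lem} states that $\Psi_j^{+(m-j-1)} b$ has weight $\lambda - \ep_{m-j} + \ep_m$, so the operator $\Psi_j^{+(m-j-1)}$ shifts weights by $-\ep_{m-j} + \ep_m = -(\alpha_{m-j} + \cdots + \alpha_{m-1})$. Applying the same reindexing $j' = m-j$ as in the first step converts this to: $\Psi_{m-j}^{+(j-1)}$ has weight $-(\alpha_j + \cdots + \alpha_{m-1})$, as claimed. There is no serious obstacle; the only point requiring care is tracking the two different conventions used for the subscript of $\Psi$ (the original summation index versus the reindexed one), so the proof need do little more than make that conversion explicit and cite Lemmas~\ref{l:Psi-is-defined} and \ref{l:wt-lem}.
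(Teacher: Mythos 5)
Your proposal is correct and follows exactly the route the paper takes: the paper gives no separate proof of this lemma, presenting it instead as a summary of the three observations you enumerate — the reindexing $j' = m-j$ applied to the original definition $\xi_m^\lambda(v_{m-j}) = \Psi_j^{+(m-1-j)}$, the well-definedness criterion from Lemma~\ref{l:Psi-is-defined} (the same cancellation $(m-j)+(j-1) = m-1$ that appears in the proof that $\Phi_m(b)$ is defined), and the weight computed via Lemma~\ref{l:wt-lem}. You have simply made explicit what the paper treats as self-evident from the preceding paragraph.
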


These negative root vectors form an interesting subset of vectors in
$\UU^-$.

\begin{lem}
 Fix $m \ge 2$. If $\alpha_{m-1}^\vee(\lambda) \ne 0$ then the map
 $\xi_m^\lambda: V_q(1) \to \UU_q^-$ is injective.
\end{lem}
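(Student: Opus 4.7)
The plan is to prove injectivity by a weight-grading argument, reducing it to the non-vanishing statement of Lemma~\ref{l:Psi-is-defined} together with the fact that the images land in pairwise distinct weight spaces of $\UU_q^-$.

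First I would recall that $\UU_q^-$ is graded by the negative root lattice, where an element $u\in\UU_q^-$ has weight $\gamma$ if $K_\mu u K_\mu^{-1} = q^{\mu(\gamma)}\,u$ for all $\mu\in\X^\vee$; equivalently, $u$ is a polynomial in the $F_i$ in which the multi-degree in $(F_1,\dots,F_{n-1})$ corresponds to $-\gamma$. This gives a direct sum decomposition $\UU_q^- = \bigoplus_\gamma (\UU_q^-)_\gamma$, and homogeneous elements lying in distinct weight spaces are automatically linearly independent.

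Next I would compute the weight of each image. By the preceding lemma, $\xi_m^\lambda(v_j) = \Psi_{m-j}^{+(j-1)}$ has weight $-(\alpha_j + \cdots + \alpha_{m-1})$ for $1 \le j \le m-1$, while $\xi_m^\lambda(v_m) = \Psi_0 = 1$ has weight $0$. These $m$ elements of the root lattice are visibly distinct: for $j < j'$ the difference $(\alpha_{j} + \cdots + \alpha_{m-1}) - (\alpha_{j'} + \cdots + \alpha_{m-1}) = \alpha_j + \cdots + \alpha_{j'-1}$ is a nonzero positive root, and the weight $0$ is distinct from every $-(\alpha_j + \cdots + \alpha_{m-1})$. (If the map is only regarded as defined on the subspace spanned by $v_1,\dots,v_m$, the argument is the same after deleting the $v_m$ case.)

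Finally I would invoke Lemma~\ref{l:Psi-is-defined}: the hypothesis $\alpha_{m-1}^\vee(\lambda) \ne 0$ ensures that every shifted operator $\Psi_{m-j}^{+(j-1)}$ is both defined and nonzero. Combining this non-vanishing with the weight-space argument shows that the images $\xi_m^\lambda(v_1),\dots,\xi_m^\lambda(v_m)$ are nonzero homogeneous elements in pairwise distinct weight spaces of $\UU_q^-$, hence linearly independent. Therefore $\xi_m^\lambda$ is injective. The only subtlety — and the one place I would want to be careful — is confirming that the shift operation really does preserve weight-homogeneity in the expected way, i.e., that $\Psi_j^{+(k)}$ lies in the single weight space $-(\alpha_{k+1}+\cdots+\alpha_{k+j})$; this follows immediately from the recursive definition, since each term in the expansion of $\Psi_j$ is (up to a scalar) a Coxeter monomial $F_1\cdots F_j$ in some order, and shifting replaces each $F_i$ by $F_{i+1}$.
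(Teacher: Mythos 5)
Your proof is correct and takes essentially the same approach as the paper's: the paper also notes that Lemma~\ref{l:Psi-is-defined} together with shifting guarantees each image $\xi_m^\lambda(v_j)$ is defined and nonzero, and then invokes the distinctness of their weights to conclude linear independence and hence injectivity. You spell out the weight-grading of $\UU_q^-$ and verify the weights are pairwise distinct more explicitly than the paper does, which is a reasonable expansion of the same idea; your parenthetical about restricting to a subspace also touches on a small ambiguity in the paper regarding the exact domain of $\xi_m^\lambda$ (the paper only prescribes the map on $v_1,\dots,v_m$, so the lemma should be read as asserting injectivity on that span), though your phrase ``deleting the $v_m$ case'' should more accurately say ``deleting the $v_{m+1},\dots,v_n$ cases.''
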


\begin{proof}
By Lemma \ref{l:Psi-is-defined} and shifting, the condition
$\alpha_{m-1}^\vee(\lambda) \ne 0$ guarantees that each
$\xi_m^\lambda(v_{j})$ exists and is non-zero. The result then
follows from the fact that the weights of the $\xi_m^\lambda(v_{j})$
are distinct.
\end{proof}

Jimbo \cite{Jimbo} has given a recursive construction of root vectors
$\hat{E}_{ij}$ (for $1\le i<j \le n$) in $\UU_q^+$ and $\hat{F}_{ij}$
(for $1\le j < i \le n$) in $\UU_q^-$ by setting:
\begin{gather*}
  \hat{E}_{i,i+1} = E_i, \quad \hat{F}_{i+1,i} = F_i \\ \hat{E}_{ij} =
  \hat{E}_{ik} \hat{E}_{kj} - q \hat{E}_{kj} \hat{E}_{ik} \text{ if }
  i < k < j\\ \hat{F}_{ij} = \hat{F}_{ik} \hat{F}_{kj} - q^{-1}
  \hat{F}_{kj} \hat{F}_{ik} \text{ if } i > k > j.
\end{gather*}
Klimyk and Schm\"{u}dgen \cite{KS}*{\S7.3.1} work out explicit
commutation relations for these elements (using Jimbo's definition of
$\UU_q(\fgl_n)$, which differs slightly from ours).  A natural
question to ask is whether or not Jimbo's root vectors can be used to
construct a basis of $\UU$ analogous the Poincar\'{e}--Birkhoff--Witt
(PBW) basis of the enveloping algebra of $\fgl_n$. We do not know the
answer to this question.

Our recursive construction of the $\Psi$ operators produces negative
root vectors in many ways, all of which are different from Jimbo's
negative root vectors. They also produce positive root vectors in just
as many ways, because there is a unique automorphism $\omega$ on
$\UU$ that interchanges $E_i$ and$F_i$ for all $i
= 1, \dots, n-1$ and also interchanges $K_i$ and $K_i^{-1}$ 
for all $i = 1, \dots, n$.

\begin{thm}\label{t:root-vectors}
Let $\lambda$ be a partition into not more than $n$ parts.  Suppose that
$\alpha_{j}^\vee(\lambda) \ne 0$ for all $j = 1, \dots, n-1$.
\begin{enumerate}
\item The union of the sets
\[
\{ \xi_m^\lambda(v_{j}) \mid 1 \le j \le m-1 \}
\]
as $m$ runs from $2$ to $n$ is equal to a set of linearly independent
negative root vectors in $\UU^-$ (in bijection with the set
$\mathcal{R}^-$ of negative roots) and each such vector is a linear
combination of Coxeter monomials.

\item If $b$ is a maximal vector of weight $\lambda$, the union of the
  sets
\[
\{ \xi_m^\lambda(v_{j}) b \ne 0 \mid 1 \le j \le m-1 \}
\]
as $m$ runs from $2$ to $n$ is equal to a set of linearly independent
vectors in $V_q(\lambda)$, in bijection with a subset of $\{ \lambda -
\alpha \mid \alpha \in \mathcal{R}^+ \}$.
\end{enumerate}
\end{thm}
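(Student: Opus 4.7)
The plan is to reduce both parts to weight computations combined with the non-vanishing result of Lemma~\ref{l:Psi-is-defined}. For part (a), the first step is a dimension count: as $(j,m)$ ranges over pairs with $1 \le j < m \le n$, the union has cardinality $\sum_{m=2}^n (m-1) = \binom{n}{2}$, matching $|\mathcal{R}^-|$ in type $\mathsf{A}_{n-1}$. By the weight computation in the lemma immediately preceding Theorem~\ref{t:root-vectors} (which itself rests on Lemma~\ref{l:wt-lem}), the element $\xi_m^\lambda(v_j) = \Psi_{m-j}^{+(j-1)}$ has weight $-(\alpha_j + \cdots + \alpha_{m-1})$, and these weights exhaust $\mathcal{R}^-$ without repetition. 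The assignment $\xi_m^\lambda(v_j) \mapsto -(\alpha_j + \cdots + \alpha_{m-1})$ thus furnishes the bijection with $\mathcal{R}^-$ claimed in (a).

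Turning to non-vanishing and linear independence: under the hypothesis $\alpha_j^\vee(\lambda) \ne 0$ for $1 \le j \le n-1$, the shifted version of Lemma~\ref{l:Psi-is-defined} implies that each $\Psi_{m-j}^{+(j-1)}$ is a non-zero element of $\UU_q^-$ (the relevant condition being $\alpha^\vee_{(m-j)+(j-1)}(\lambda) = \alpha^\vee_{m-1}(\lambda) \ne 0$). As these vectors lie in distinct weight spaces under the standard weight grading on $\UU_q^-$, they are linearly independent. The Coxeter-monomial claim is proved by induction on the subscript of $\Psi$: the base cases $\Psi_0 = 1$ and $\Psi_1 = F_1/[d_1]$ are trivially linear combinations of Coxeter monomials, and in the inductive step the recursion of Definition~\ref{d:Psi} multiplies a linear combination of shifted Coxeter monomials in $F_2, \dots, F_n$ on the left or right by $F_1$; each resulting product $F_1 F_w^+$ or $F_w^+ F_1$ uses each of $F_1, \dots, F_n$ exactly once and is therefore itself a Coxeter monomial, matching the distinguished reduced expressions of Remark~\ref{r:dist-reps}. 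Shifting preserves this property, so every $\xi_m^\lambda(v_j)$ is a linear combination of Coxeter monomials.

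Part (b) then follows directly. Applying $\xi_m^\lambda(v_j)$ to the maximal vector $b$ produces a weight vector in $V_q(\lambda)$ of weight $\lambda - (\alpha_j + \cdots + \alpha_{m-1}) \in \{\lambda - \alpha : \alpha \in \mathcal{R}^+\}$, and distinct pairs $(j,m)$ yield distinct weights. Consequently, those $\xi_m^\lambda(v_j) b$ that happen to be non-zero lie in distinct weight spaces of $V_q(\lambda)$ and are automatically linearly independent, with the weight providing a bijection onto a subset of $\{\lambda - \alpha : \alpha \in \mathcal{R}^+\}$. The main delicacy of the argument lies in the inductive verification of the Coxeter-monomial claim in (a); once one confirms that juxtaposing $F_1$ with a shifted Coxeter monomial in $F_2, \dots, F_n$ produces a Coxeter monomial in $F_1, \dots, F_n$, the rest of the proof is essentially formal.
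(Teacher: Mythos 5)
Your proof is correct and follows essentially the same route as the paper's: weight considerations plus Lemma~\ref{l:Psi-is-defined} for linear independence and the bijection with $\mathcal{R}^-$, the recursion of Definition~\ref{d:Psi} together with Lemma~\ref{l:Coxeter} (whose construction $I_n = \{s_1 w^+\} \sqcup \{w^+ s_1\}$ is exactly your juxtaposition observation) for the Coxeter-monomial claim, and passing to the non-zero images for part (b). You have merely unpacked the details that the paper's terse proof leaves implicit.
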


\begin{proof}
The first claim in part (a) follows from weight considerations. Linear
independence follows from the fact that the weights of the image
vectors are all distinct and non-zero (by
Lemma~\ref{l:Psi-is-defined}).  For the second claim in part (a),
combine Lemma~\ref{l:Coxeter} with Lemma~\ref{l:wt-lem}, in light of
Definition~\ref{d:Psi}.  Part (b) follows from part (a). Note that it
is necessary to collect the non-zero elements in the union.
\end{proof}

\begin{rmk}
(i) The weight $\lambda = (n-1, n-2, \dots, 1, 0)$ satisfies the
  hypothesis of the theorem, so we get negative root vectors in
  that case, satisfying the constraint $\alpha_j^\vee(\lambda) = 1$
  for all $j$.

(ii) By applying $\omega$ to a set of negative
  root vectors, we get a set of positive root vectors. One can ask
  under what conditions these root vectors can be used to
  construct PBW-type bases of $\UU_q^-$ and $\UU_q^+$, and thus also
  of $\UU_q$.

(iii) If $\lambda$ satisfies the hypothesis, part (b) of the theorem
  gives a set of linearly independent elements of $V_q(\lambda)$, which
  can of course be extended to a basis. 
\end{rmk}


\section{Comparison with the classical case}\label{s:q=1}\noindent
For the moment, we stick to our standing assumption, that $\Bbbk$ is 
a field containing a non-zero element $q$ which is not a root of unity. 
(Soon we will specialize $q$ to $1$.) Recall that, for any $\mu \in \X^\vee$,  
Lusztig \cite{Lusztig:90}*{2.19} defined elements 
$\left[\begin{smallmatrix} K_\mu; s\\t \end{smallmatrix}\right]$ 
in $\UU = \UU_q(\fgl_{n+1})$ by the rule
\[
    \sqbinom{K_\mu; s}{t} = 
\prod_{i=1}^t \frac{K_\mu q^{s-i+1} - K_\mu^{-1} q^{-(s-i+1)}}{q^i - q^{-i}}.
\]
We need only the special case $t=1$ in the above. We set 
$[K_\mu; s] = \left[\begin{smallmatrix} K_\mu; s\\1 \end{smallmatrix}\right]$, 
for notational convenience, so that
\[
[K_\mu; s] = \frac{K_\mu q^s - K_\mu^{-1} q^{-s}}{q - q^{-1}} .
\]
If $v$ is a weight vector of weight $\lambda$ then in terms 
of the scalars $c_n$, $d_n$ in Definition~\ref{d:Psi} we have:
\begin{equation}\label{e:c_n-action}
\begin{aligned}
    \relax [K_2K_{n+1}^{-1}; n-1] \, v &= [c_n]\, v 
    = [\lambda_2 - \lambda_{n+1} + n-1]\, v , \\
    [K_1K_{n+1}^{-1}; n-1] \, v &= [d_n]\, v 
= [\lambda_1 - \lambda_{n+1} + n-1]\, v
\end{aligned}
\end{equation}
This motivates the following.

\begin{defn}\label{d:newPsi}
Recursively define operators $\ov{\Psi}_n$ in $\UU = \UU_q(\fgl_{n+1}(\Bbbk))$ 
by setting $\ov{\Psi}_1 = F_1$ and (for all $n \ge 2$)
\[
\ov{\Psi}_n = 
F_1 \ov{\Psi}_{n-1}^+ [K_2K_{n+1}^{-1}; n-1] - \ov{\Psi}_{n-1}^+ F_1 
[K_2K_{n+1}^{-1}; n-2] . 
\]
\end{defn}

This definition was obtained from Definition ~\ref{d:Psi} by eliminating
denominators and replacing $[c_n]$ and $[c_n-1]$ by elements of $\UU^0$.
It is clear from \eqref{e:c_n-action} that the operators $\ov{\Psi}_n$ and
$\Psi^\lambda_n$ (see Definition~\ref{d:Psi}) act the same on weight vectors
of weight $\lambda$, up to scalar multiple. To be precise,
$\ov{\Psi}_n\, v = [d_n]\, \Phi^\lambda_n\, v$ if $v$ is a weight vector 
of weight $\lambda$.
Note that $\ov{\Psi}_n$ is independent of $\lambda$. 

From now on, we
let $\Bbbk$ be an arbitrary field of characteristic zero, and we take $q=1$ 
in $\Bbbk$. Regarding $\Bbbk$ as an $\A$-algebra via the specialization 
morphism sending $x \to 1$, we obtain an algebra $\UU_1 = \UU_1(\fgl_{n+1})$ 
over $\Bbbk$, defined by $\UU_1 = \UU_\A \otimes_\A \Bbbk$, as in 
\eqref{e:U_q}. By Lusztig \cite{Lusztig:90}*{8.16}, 
there is a surjective algebra morphism mapping $\UU_1$ onto
the universal enveloping algebra $\fU = \fU(\fgl_{n+1}(\Bbbk))$, with kernel
generated by all $K_j-1$, for $j = 1, \dots, n+1$. 
This is what we mean by the classical case.
As $[m]_{q=1} = m$ and $[K_j; s]_{q=1} = H_j + s$, 
the following is a classical analogue of 
Definition~\ref{d:newPsi}.

\begin{lem}\label{l:newPsiq1}
The $q=1$ version of the recursion given in Definition \upshape{\ref{d:newPsi}} 
is equivalent to setting
$\ov{\Psi}_1 = F_1$ and (for $n \ge 2$)
\[
\ov{\Psi}_n = 
F_1 \ov{\Psi}_{n-1}^+ (H_2-H_{n+1} + n-1) - \ov{\Psi}_{n-1}^+ F_1 
(H_2-H_{n+1} + n-2) .
\]
\end{lem}


For any $i,j$ we set $E_{i,j} := e_{i,j}$ (where the $e_{i,j}$ 
are the matrix units as in Section \ref{s:pre}) 
and $F_{i,j}:= E_{j,i}$. Let $C_{i,j}:= H_i-H_j+j-i$.
Regard these as elements of the universal enveloping algebra, as usual. 
Recall \cite{Carter} Carter's lowering operators $S_{i,j}$ 
(for $1 \le i < j \le n+1$) in $\fU(\fgl_{n+1}(\Bbbk))$, defined 
by the $(j-i) \times (j-i)$ determinant: 
\[
S_{i,j} = \left|
\begin{matrix}
    F_{i,i+1} & F_{i,i+2} & \dots & \dots & F_{i,j-1} & F_{i,j} \\
    -C_{i,i+1} & F_{i+1,i+2} & \cdots & \cdots & F_{i+1,j-1} & F_{i+1,j} \\
    0 & -C_{i,i+2} & F_{i+2,i+3} & \cdots & F_{i+2,j-1} & F_{i+2,j} \\
    \vdots & \vdots & \ddots & \ddots & \vdots & \vdots \\
    0 & 0 & \cdots & -C_{i,j-2} & F_{j-2,j-1} & F_{j-2,j} \\
    0 & 0 & \cdots & 0 & -C_{i,j-1} & F_{j-1,j}
\end{matrix}
\right|.
\]
The $(s,t)$-entry of the determinant is $F_{i+s-1,i+t}$ if $s\le t$,
$-C_{i+s-1,i+s}$ if $s=t+1$, and $0$ otherwise. We note that care is
required in interpreting this determinant, as $\fU(\fgl_{n+1}(\Bbbk))$
is a non-commutative ring. Here, we follow Brundan \cite{Brundan:98b} in
\emph{defining} the determinant of an $n \times n$ matrix $M=(M_{i,j})$ 
over a non-commutative ring to be equal to $\sum_{\pi \in \Sym_n} 
\text{sgn}(\pi)\, M_{1,1\pi} \cdots M_{n,n\pi}$. In other words,
every monomial in the expansion of the determinant involves a product 
in which the terms are taken in order of the rows of the matrix.

For our purposes, it turns out to be sufficient to consider only
the case of $S_{1,n+1}$ in $\fU(\fgl_{n+1}(\Bbbk))$ as $n$ varies.
(It is immediate from the definition that $S_{i,j} = S_{1,j+1-i}^{+(i-1)}$; 
that is, the $S_{i,j}$ for $i>1$ are obtained from the $S_{1,k}$ by shifting.)
The crucial observation for us is the following.

\begin{prop}\label{p:lowering}
Carter's lowering operator $S_{1,n}$ in $\fU(\fgl_{n}(\Bbbk))$ 
can be defined recursively for all $n \ge 2$ by $S_{1,2} = F_1$ 
and (for $n \ge 2$) 
\[
S_{1,n+1} = F_n S_{1,n}(H_1-H_n + n-1) - S_{1,n}F_n(H_1-H_n + n-2).
\]
\end{prop}

\begin{proof}[Proof sketch]
Expand (paying attention to the ordering convention) the determinant 
defining $S_{1,n+1}$ by a Laplace expansion on its last row. 
This gives the equality $S_{1,n+1} = A C_{1,n} + S_{1,n} F_{n,n+1}$,
where $A$ is a determinant with one fewer row and column. 
Now use the commutator formula $F_{i,n+1} = [F_n, F_{i,n}]$ 
to replace each term in the last column of $A$ by a difference 
of two terms. 
The result then follows by standard linearity properties of determinants. 
\end{proof}

Although the recursion in the last result looks similar to the
recursion in Lemma~\ref{l:newPsiq1}, there is an 
important difference, for which we now account. 
Recall that negative transpose ($X \mapsto -X^\tsp$) defines an 
automorphism of $\fgl_n(\Bbbk)$. This induces a corresponding 
automorphism of $\fU(\fgl_n(\Bbbk))$, defined on generators, by
\[
E_i \mapsto -F_i, \quad F_i \mapsto -E_i, \quad H_j \mapsto -H_j
\]
for all $i = 1, \dots, n-1$ and $j = 1, \dots, n$. When restricted 
to $\fU(\fsl_n(\Bbbk))$, it coincides with the well known automorphism 
in \cite{Humphreys:72}*{Prop.~14.3}. Presumably the next result is
well known; we include a proof because we were unable to find a 
suitable reference.

\begin{lem}\label{l:dia-aut}
There is an automorphism $\sigma_{n-1}$ of $\fU(\fgl_n(\Bbbk))$ 
defined on generators by 
\[
E_i \mapsto E_{n-i}, \quad F_i \mapsto F_{n-i},\quad 
H_j \mapsto -H_{n+1-j}
\]
for all $i = 1, \dots, n-1$ and $j = 1, \dots, n$. It 
corresponds to the Lie algebra automorphism obtained from the
negative transpose map $X \mapsto -X^\tsp$ by conjugating by the
$n \times n$ matrix $M = DJ$, where 
$D = \text{diag}(1,-1,1,-1,\cdots)$ and $J$ is the anti-diagonal 
matrix with all entries equal to $1$ on its anti-diagonal. 
\end{lem}

\begin{proof}
We need to see what happens when we conjugate the negative transpose 
map $X \mapsto -X^\tsp$ by $M = DJ$. 
We do this in two steps: first, we conjugate by $J$ and then we conjugate 
by the diagonal matrix $D$. Conjugation by $J$ flips indices 
$j \leftrightarrow n+1-j$, thus sends 
$E_i = E_{i,i+1}$ to $F_{n-i} = E_{n+1-i,n-i}$ and, similarly, 
sends $F_i = E_{i+1,i}$ to $E_{n-i} = E_{n-i,n+1-i}$. 
It also sends $H_i = E_{i,i}$ to $H_{n+1-i} = E_{n+1-i,n+1-i}$. 
Thus, by composing with the negative transpose map, 
we see that the map $X \mapsto -JX^{\mathsf{T}}J$ is given by
\[
E_i \mapsto -E_{n-i}, \quad F_i \mapsto -F_{n-i},\quad 
H_j \mapsto -H_{n+1-j}.
\]
The final step is to conjugate again by the diagonal matrix $D$, 
which changes the first two signs and preserves the third,
giving the result.
\end{proof}

\begin{rmk}
When restricted to $\fsl_n(\Bbbk)$,
the map $X \mapsto -DX^\tsp D^{-1}$ is given by
$E_i \mapsto E_{n-i}$, $F_i \mapsto F_{n-i}$, and 
$\alpha_i^\vee \mapsto \alpha_{n-i}^\vee$ 
(that is, $H_i - H_{i+1} \mapsto H_{n-i} - H_{n+1-i}$), for all 
$i = 1, \dots, n-1$. Thus, it is induced by
the graph automorphism that reverses the ordering of the nodes 
in the Dynkin diagram of type $\sf{A}$. It follows that, when restricted to 
$\fsl_n(\Bbbk)$, the map $X \mapsto -DX^\tsp D^{-1}$ is a realization 
of the corresponding outer automorphism.
(An outer automorphism of a semisimple Lie algebra 
is determined only up to conjugation; in type $\sf{A}$ 
the outer automorphism group has order 2.) 
\end{rmk}

We have two homomorphisms from $\fU(\fgl_n)$ into $\fU(\fgl_{n+1})$. 
One is given by 
the natural inclusion that maps $F_i$ to $F_i$, 
$E_i$ to $E_i$, and $H_j$ to $H_j$
for $1 \leq i \leq n-1$ and $1 \le j \le n$. Denote this map by
$\iota:\fU(\fgl_n) \to \fU(\fgl_{n+1})$. 
The other homomorphism is via our ``+'' map 
given by $F_i^+ = F_{i+1}$, $E_i^+ = E_{i+1}$, and $H_j^+ = H_{j+1}$   
for $1 \leq i \leq n-1$ and $1 \le j \le n$. 
These maps are intertwined by the family of $\sigma$ 
automorphisms described in Lemma~\ref{l:dia-aut}. 
More precisely, we have the following commutative diagram of morphisms
\begin{equation}\label{intertwines}
\begin{tikzcd}
\fU(\fgl_n) \arrow[r, "+"] \arrow[d, "\sigma_{n-1}"'] & \fU(\fgl_{n+1}) \arrow[d, "\sigma_{n}"] \\
\fU(\fgl_n) \arrow[r, "\iota"'] & \fU(\fgl_{n+1})
\end{tikzcd} 
\end{equation}
which will now be applied to compare the recursions for $\ov{\Psi}_n$ and $S_{1,n+1}$.

\begin{thm}\label{t:twist}
In the classical case, we have $\sigma_n(\ov{\Psi}_n) = S_{1,n+1}$,
for all $n$, as an equality in $\fU(\fgl_{n+1}(\Bbbk))$.
\end{thm}

\begin{proof}
As $C_{1,n} = H_1 -H_n + n-1$, we have
$C_{1,n}^+ = H_2 - H_{n+1} + n-1$. 
Thus, the recursion defining $\ov{\Psi}_n$ 
in Lemma~\ref{l:newPsiq1} is given by
\[
\ov{\Psi}_1 = F_1, \quad \ov{\Psi}_n = F_1 \ov{\Psi}_{n-1}^+ C_{1,n}^+  
- \ov{\Psi}_{n-1}^+ F_1 (C_{1,n}^+ - 1). 
\]
As $\sigma_1(\ov{\Psi}_1) = \sigma_1(F_1) = F_1 = S_{1,2}$ the 
result holds in the initial case. Assume by induction that
$\sigma_{n-1}(\ov{\Psi}_{j-1}) = S_{1,j}$, for some $j \ge 2$. Then
\[
\ov{\Psi}_j = F_1 \ov{\Psi}_{j-1}^+ C_{1,j}^+  
- \ov{\Psi}_{j-1}^+ F_1 (C_{1,j}^+ - 1). 
\]
It makes sense to apply the automorphism $\sigma_j$ to this 
equality in $\fU(\fgl_{j+1}(\Bbbk))$. The result is
\[
\sigma_j(\ov{\Psi}_j) = F_j \sigma_j(\ov{\Psi}_{j-1}^+) \sigma_j(C_{1,j}^+)  
- \sigma_j(\ov{\Psi}_{j-1}^+) F_j \sigma_j(C_{1,j}^+ - 1).
\]
The commutativity of the diagram \eqref{intertwines} shows that 
$\sigma_j(\ov{\Psi}_{j-1}^+) = \sigma_{j-1}(\ov{\Psi}_{j-1})$ and
$\sigma_j(C_{1,j}^+) = \sigma_{j-1}(C_{1,j})$. 
As $C_{1,j}$ is fixed by $\sigma_{j-1}$, it follows by 
the inductive hypothesis that
\[
\sigma_j(\ov{\Psi}_j) = F_j S_{1,j} C_{1,j} 
- S_{1,j} F_j (C_{1,j} - 1) = S_{1,j+1}
\]
where the last equality is by Proposition~\ref{p:lowering}. 
\end{proof}

\begin{rmk}
(i) Theorem \ref{t:twist} shows that, up to 
a twisting by the outer automorphism induced by the 
graph automorphism of the Dynkin diagram,
the $q=1$ version of the $\ov{\Psi}_n$-operator is the same as 
Carter's lowering operator $S_{1,n+1}$. 

(ii) In \cite{Brundan:98b}*{3.3}, Brundan defines a map 
``evaluation at $\lambda$'' on $\fU = \fU^- \fU^0 \fU^+$
(sending each PBW-basis element $FHE$ to $F\lambda(H)E$) 
in order to relate his generalized lowering operators to 
certain operators defined by Kleshchev \cite{Klesh}. 
Evaluation at $\lambda$ takes $(\ov{\Psi}_j)_{q=1}$ 
to a scalar multiple of $(\Psi_j)_{q=1}$.

(iii) Brundan \cites{Brundan:98a, Brundan:98b} 
defined a $q$-analogue of Carter's lowering operators, 
which depends on choosing root vectors in the quantized enveloping 
algebra. There is a $q$-analogue of Theorem~\ref{t:twist} in which
$\ov{\Psi}_n$ is the element in Definition \ref{d:newPsi} and 
$S_{1,n+1}$ is the corresponding $q$-lowering operator defined 
by Brundan, but the equality holds only up to a multiple 
by an element of $\UU^0$ that 
has image $1$ under the specialization map $\UU_1 \to \fU$ 
discussed above. The proof is similar to the proof of 
Theorem~\ref{t:twist}, but the calculations are more complicated.
For example, by expanding the appropriate determinant in 
Brundan's definition \cite{Brundan:98b}*{4.2}, we find that
\[
S_{1,3} = q^{-6} K_1^2K_2K_3 
\left(F_{2}F_{1} [K_1K_2^{-1};1] - F_{1}F_{2} [K_1K_2^{-1};0]\right).
\]
Under the specialization at $q=1$, this has image equal to the 
corresponding classical lowering operator defined by Carter. 
Comparing with 
\[
\ov{\Psi}_2 =  F_1F_2 [K_1K_2^{-1};1] 
- F_2F_1[K_1K_2^{-1};0]  
\]
we see an example of the claimed correspondence. 
%
\end{rmk}

\begin{bibdiv}
\begin{biblist}

\bib{BJS}{article}{
   author={Billey, Sara C.},
   author={Jockusch, William},
   author={Stanley, Richard P.},
   title={Some combinatorial properties of Schubert polynomials},
   journal={J. Algebraic Combin.},
   volume={2},
   date={1993},
   number={4},
   pages={345--374},
}

\bib{BLM}{article}{
   author={Beilinson, A. A.},
   author={Lusztig, G.},
   author={MacPherson, R.},
   title={A geometric setting for the quantum deformation of ${\rm GL}_n$},
   journal={Duke Math. J.},
   volume={61},
   date={1990},
   number={2},
   pages={655--677},
}

\bib{Brundan:98a}{article}{
   author={Brundan, Jonathan},
   title={Modular branching rules and the Mullineux map for Hecke algebras
   of type $A$},
   journal={Proc. London Math. Soc. (3)},
   volume={77},
   date={1998},
   number={3},
   pages={551--581},
}

\bib{Brundan:98b}{article}{
   author={Brundan, Jonathan},
   title={Lowering operators for ${\rm GL}(n)$ and quantum ${\rm GL}(n)$},
   conference={
      title={Group representations: cohomology, group actions and topology},
      address={Seattle, WA},
      date={1996},
   },
   book={
      series={Proc. Sympos. Pure Math.},
      volume={63},
      publisher={Amer. Math. Soc., Providence, RI},
   },
   date={1998},
   pages={95--114},
}

\bib{Carter}{article}{
   author={Carter, R. W.},
   title={Raising and lowering operators for ${\germ s}{\germ l}_n$, with
   applications to orthogonal bases of ${\germ s}{\germ l}_n$-modules},
   conference={
      title={The Arcata Conference on Representations of Finite Groups},
      address={Arcata, Calif.},
      date={1986},
   },
   book={
      series={Proc. Sympos. Pure Math.},
      volume={47, Part 2},
      publisher={Amer. Math. Soc., Providence, RI},
   },
   date={1987},
   pages={351--366},
}

\bib{DJ:Hecke}{article}{
   author={Dipper, Richard},
   author={James, Gordon},
   title={Representations of Hecke algebras of general linear groups},
   journal={Proc. London Math. Soc. (3)},
   volume={52},
   date={1986},
   number={1},
   pages={20--52},
}

\bib{DJ:Hecke-87}{article}{
   author={Dipper, Richard},
   author={James, Gordon},
   title={Blocks and idempotents of Hecke algebras of general linear groups},
   journal={Proc. London Math. Soc. (3)},
   volume={54},
   date={1987},
   number={1},
   pages={57--82},
}

\bib{DJ1}{article}{
   author={Dipper, Richard},
   author={James, Gordon},
   title={The $q$-Schur algebra},
   journal={Proc. London Math. Soc. (3)},
   volume={59},
   date={1989},
   number={1},
   pages={23--50},
}

\bib{DJ2}{article}{
   author={Dipper, Richard},
   author={James, Gordon},
   title={$q$-tensor space and $q$-Weyl modules},
   journal={Trans. Amer. Math. Soc.},
   volume={327},
   date={1991},
   number={1},
   pages={251--282},
}

\bib{Donkin:qSchur}{book}{
   author={Donkin, S.},
   title={The $q$-Schur algebra},
   series={London Mathematical Society Lecture Note Series},
   volume={253},
   publisher={Cambridge University Press, Cambridge},
   date={1998},
}

\bib{DG:orthog}{article}{
   author={Doty, Stephen},
   author={Giaquinto, Anthony},
   title={An orthogonal realization of representations of the
     Temperley--Lieb algebra},
   journal={J. Algebra},
   volume={655},
   pages={294--332},
   year={2024},
}



\bib{DPS}{article}{
   author={Du, Jie},
   author={Parshall, Brian},
   author={Scott, Leonard},
   title={Quantum Weyl reciprocity and tilting modules},
   journal={Comm. Math. Phys.},
   volume={195},
   date={1998},
   number={2},
   pages={321--352},
}

\bib{EN}{article}{
   author={Erdmann, Karin},
   author={Nakano, Daniel K.},
   title={Representation type of $q$-Schur algebras},
   journal={Trans. Amer. Math. Soc.},
   volume={353},
   date={2001},
   number={12},
   pages={4729--4756},
}

\bib{Geck-Pfeiffer}{book}{
   author={Geck, Meinolf},
   author={Pfeiffer, G\"{o}tz},
   title={Characters of finite Coxeter groups and Iwahori-Hecke algebras},
   series={London Mathematical Society Monographs. New Series},
   volume={21},
   publisher={The Clarendon Press, Oxford University Press, New York},
   date={2000},
}

\bib{Grojnowski-Lusztig}{article}{
   author={Grojnowski, I.},
   author={Lusztig, G.},
   title={On bases of irreducible representations of quantum ${\rm GL}_n$},
   conference={
      title={Kazhdan-Lusztig theory and related topics},
      address={Chicago, IL},
      date={1989},
   },
   book={
      series={Contemp. Math.},
      volume={139},
      publisher={Amer. Math. Soc., Providence, RI},
   },
   date={1992},
   pages={167--174},
}

\bib{Gyoja}{article}{
   author={Gyoja, Akihiko},
   title={A $q$-analogue of Young symmetrizer},
   journal={Osaka J. Math.},
   volume={23},
   date={1986},
   number={4},
   pages={841--852},
}




\bib{Hong-Kang}{book}{
   author={Hong, Jin},
   author={Kang, Seok-Jin},
   title={Introduction to quantum groups and crystal bases},
   series={Graduate Studies in Mathematics},
   volume={42},
   publisher={American Mathematical Society, Providence, RI},
   date={2002},
}

\bib{Humphreys:72}{book}{
   author={Humphreys, James E.},
   title={Introduction to Lie algebras and representation theory},
   series={Graduate Texts in Mathematics},
   volume={9},
   publisher={Springer-Verlag, New York-Berlin},
   date={1972},
}

\bib{Humphreys}{book}{
   author={Humphreys, James E.},
   title={Reflection groups and Coxeter groups},
   series={Cambridge Studies in Advanced Mathematics},
   volume={29},
   publisher={Cambridge University Press, Cambridge},
   date={1990},
}


\bib{Jantzen:LQG}{book}{
   author={Jantzen, Jens Carsten},
   title={Lectures on quantum groups},
   series={Graduate Studies in Mathematics},
   volume={6},
   publisher={American Mathematical Society, Providence, RI},
   date={1996},
}

\bib{Jantzen:98}{article}{
   author={Jantzen, Jens Carsten},
   title={Introduction to quantum groups},
   conference={
      title={Representations of reductive groups},
   },
   book={
      series={Publ. Newton Inst.},
      volume={16},
      publisher={Cambridge Univ. Press, Cambridge},
   },
   date={1998},
   pages={105--127},
}

\bib{Jantzen:RAGS}{book}{
   author={Jantzen, Jens Carsten},
   title={Representations of algebraic groups},
   series={Mathematical Surveys and Monographs},
   volume={107},
   edition={2},
   publisher={American Mathematical Society, Providence, RI},
   date={2003},
}

\bib{Jimbo}{article}{
   author={Jimbo, Michio},
   title={A $q$-analogue of $U({\germ g}{\germ l}(N+1))$, Hecke algebra, and
   the Yang-Baxter equation},
   journal={Lett. Math. Phys.},
   volume={11},
   date={1986},
   number={3},
   pages={247--252},
}

\bib{KL}{article}{
   author={Kazhdan, David},
   author={Lusztig, George},
   title={Representations of Coxeter groups and Hecke algebras},
   journal={Invent. Math.},
   volume={53},
   date={1979},
   number={2},
   pages={165--184},
}

\bib{Klesh}{article}{
   author={Kleshchev, Alexander S.},
   title={Branching rules for modular representations of symmetric groups.
   II},
   journal={J. Reine Angew. Math.},
   volume={459},
   date={1995},
   pages={163--212},
}

\bib{KS}{book}{
   author={Klimyk, Anatoli},
   author={Schm\"{u}dgen, Konrad},
   title={Quantum groups and their representations},
   series={Texts and Monographs in Physics},
   publisher={Springer-Verlag, Berlin},
   date={1997},
}

\bib{Lu:83}{article}{
   author={Lusztig, G.},
   title={Left cells in Weyl groups},
   conference={
      title={Lie group representations, I},
      address={College Park, Md.},
      date={1982/1983},
   },
   book={
      series={Lecture Notes in Math.},
      volume={1024},
      publisher={Springer, Berlin},
   },
   date={1983},
   pages={99--111},
}

\bib{Lusztig:90}{article}{
   author={Lusztig, George},
   title={Quantum groups at roots of $1$},
   journal={Geom. Dedicata},
   volume={35},
   date={1990},
   number={1-3},
   pages={89--113},
}

\bib{Lusztig}{book}{
   author={Lusztig, George},
   title={Introduction to quantum groups},
   series={Progress in Mathematics},
   volume={110},
   publisher={Birkh\"{a}user Boston, Inc., Boston, MA},
   date={1993},
}

\bib{Lu:unequal}{book}{
   author={Lusztig, G.},
   title={Hecke algebras with unequal parameters},
   series={CRM Monograph Series},
   volume={18},
   publisher={American Mathematical Society, Providence, RI},
   date={2003},
}

\bib{Martin}{article}{
   author={Martin, Paul Purdon},
   title={On Schur-Weyl duality, $A_n$ Hecke algebras and quantum ${\rm
   sl}(N)$ on $\bigotimes^{n+1}{\bf C}^N$},
   conference={
      title={Infinite analysis, Part A, B},
      address={Kyoto},
      date={1991},
   },
   book={
      series={Adv. Ser. Math. Phys.},
      volume={16},
      publisher={World Sci. Publ., River Edge, NJ},
   },
   date={1992},
   pages={645--673},
}

\bib{Mathas}{book}{
   author={Mathas, Andrew},
   title={Iwahori-Hecke algebras and Schur algebras of the symmetric group},
   series={University Lecture Series},
   volume={15},
   publisher={American Mathematical Society, Providence, RI},
   date={1999},
}
	
\bib{Ram}{article}{
   author={Ram, Arun},
   title={A Frobenius formula for the characters of the Hecke algebras},
   journal={Invent. Math.},
   volume={106},
   date={1991},
   number={3},
   pages={461--488},
}

\bib{Ram-Wenzl}{article}{
   author={Ram, Arun},
   author={Wenzl, Hans},
   title={Matrix units for centralizer algebras},
   journal={J. Algebra},
   volume={145},
   date={1992},
   number={2},
   pages={378--395},
}

\bib{Springer}{book}{
   author={Springer, T. A.},
   title={Linear algebraic groups},
   series={Progress in Mathematics},
   volume={9},
   edition={2},
   publisher={Birkh\"{a}user Boston, Inc., Boston, MA},
   date={1998},
}

\bib{Stembridge}{article}{
   author={Stembridge, John R.},
   title={On the fully commutative elements of Coxeter groups},
   journal={J. Algebraic Combin.},
   volume={5},
   date={1996},
   number={4},
   pages={353--385},
}

\end{biblist}
\end{bibdiv}
\end{document}